\DeclareMathOperator{\Pic}{Pic}
\DeclareMathOperator{\Jac}{Jac}
\DeclareMathOperator{\Div}{Div}
\DeclareMathOperator{\snf}{SNF}
\DeclareMathOperator{\rank}{rk}
\newcommand\scalemath[2]{\scalebox{#1}{\mbox{\ensuremath{\displaystyle #2}}}}
\newcommand{\angles}[1]{\langle #1 \rangle}
\newcommand{\Z}{\mathbb{Z}}
\newcommand{\mycirc}[1][black]{\textcolor{#1}{\ensuremath\bullet}}
\theoremstyle{definition}
\newtheorem{mydef}{Definition}[section]
\newtheorem{myeg}[mydef]{Example}
\newtheorem{example}[mydef]{Example}
\newtheorem{question}[mydef]{Question}
\newtheorem{rmk}[mydef]{Remark}
\newtheorem{remark}[mydef]{Remark}
\theoremstyle{plain}
\newtheorem{mythm}[mydef]{Theorem}
\newtheorem{theorem}[mydef]{Theorem}
\newtheorem*{nothm}{Theorem}
\newtheorem{mytheorem}[mydef]{Theorem}
\newtheorem{lem}[mydef]{Lemma}
\newtheorem{lemma}[mydef]{Lemma}
\newtheorem{pro}[mydef]{Proposition}
\newtheorem{proposition}[mydef]{Proposition}
\newtheorem{corollary}[mydef]{Corollary}
\begin{document}

\title{On Picard groups and Jacobians of directed graphs}

\author{Jaiung Jun}
\address{Department of Mathematics, State University of New York at New Paltz, NY 12561, USA}
\email{junj@newpaltz.edu}

\author{Youngsu Kim}
\address{Department of Mathematics, California State University San Bernardino, San Bernardino, CA 92407}
\email{youngsu.kim@csusb.edu}

\author{Matthew Pisano}
\address{Computer Science Department, Rensselaer Polytechnic Institute, NY 12561, USA}
\email{pisanm2@rpi.edu}

\makeatletter
\@namedef{subjclassname@2020}{
\textup{2020} Mathematics Subject Classification}
\makeatother

\subjclass[2020]{05C50, 05C25, 05C20, 20K01}
\keywords{Jacobian of a graph, sandpile group, critical group, chip-firing game, directed graphs, cycle graph, wheel graphs, Laplacian of graph, trees, Smith normal form}

\begin{abstract}
The Picard group of an undirected graph is a finitely generated abelian group, and the Jacobian is the torsion subgroup of the Picard group. 
These groups can be computed by using the Smith normal form of the Laplacian matrix of the graph or by using chip-firing games associated with the graph. 
One may consider its generalization to directed graphs based on the Laplacian matrix. 
We compute Picard groups and Jacobians for several classes of directed trees, cycles, wheel, and multipartite graphs. 
\end{abstract}

\maketitle

\section{Introduction}

In this paper, we compute Picard groups and Jacobians for several classes of directed graphs, including undirected graphs as directed graphs having all bi-directional edges. 
Our results are based on the explicit description of Smith normal forms of Laplacian matrices, 
and our proofs utilize the properties of both graphs and algebra, including classical theorems such as Cramer's rule. 
Smith normal forms arise naturally in several area of mathematics.  
For instance, when computing homology groups of simplicial complexes and the structure theorem of a finitely generated abelian group\footnote{Or its generalization to the structure theorem of a finitely generated modules over a principal ideal domain}. 
Further, Stanley introduces Smith normal form and its applications in combinatorics in \cite{stanley2016smith}. 

For an undirected graph\footnote{All graphs are assumed to be finite. We allow multiple edges, but no loops.} $G$, 
one may define the Picard group and Jacobian of $G$ by using a combinatorial game, called a \emph{chip-firing game}, played on the graph $G$.
The game can be completely described by the Laplacian matrix $L_G$, and
this allows one to alternately define the Picard group and the Jacobian of $G$ from $L_G$ without referring the game.
In this paper, we use Laplacian matrices of directed graphs to define their Picard groups and Jacobians (Definitions \ref{definition: directed laplacian} and \ref{definition: directed picard groups and jacobians}). 

We first introduce the chip-firing game for undirected graphs. 
Consider an undirected graph $G$. 
To play a chip-firing game, one starts by placing chips (possibly negative as ``debt'') at each vertex of $G$. At each turn, a vertex borrows or lends chips from or to all its adjacent vertices simultaneously. 
One wins the game if one can reach at a chip configuration such that every vertex is debt-free after finitely many turns.

Whether one can win a game or not depends on the initial chip configuration. 
For instance, if the total number of chips is negative, 
then such a game is not winnable since borrowing and lending moves preserve the total number of chips. 
It is natural to ask whether one can determine a given chip configuration is winnable or not. 
In some cases, one can determine a winning chip configuration by using \emph{divisor theory on graphs} by a result of by Baker and Norine \cite[Theorem 1.9]{baker2007riemann}. 

The chip-firing game can be studied algebraically as follows.
One can write any chip configuration on a graph $G$ as an element of the free abelian group generated by the set of vertices $V(G)$ of $G$. 
The collection of all configurations is denoted by $\Div(G)$. 
An element of $\Div(G)$ is called a \emph{divisor}, 
and a divisor $D$ is \emph{effective} if it is of the form $D=\sum_{v \in V(G)} a_v v$, where $a_v \geq 0$. 
Divisors $D$ and $D'$ in $\Div(G)$ are \emph{equivalent} if $D'$ can be obtained from $D$ in a finite sequence of borrowing and lending moves. 
Under this correspondence, 
a chip configuration is {winnable} if and only if its corresponding divisor $D$ is equivalent to an effective divisor. 
This equivalence is in fact an congruence relation and defines a group 
\begin{equation}\label{picDefinition}
\Pic(G):=\Div(G)/\sim,
\end{equation}
called the \emph{Picard group} of $G$. 
The torsion subgroup of $\Pic(G)$, denoted by $\Jac(G)$, is called the \emph{Jacobian} of $G$.\footnote{Depending on the literature, $\Jac(G)$ is also called as a critical group or a sandpile group. See \cite[Section 5.6]{baker2007riemann}.} 

For an undirected graph $G$ with the vertices $V(G) = \{v_1,v_2,\dots,v_n\}$, one defines an $n\times n$ matrix $L_G$, called the \emph{Laplacian matrix} of $G$, as follows. 
\begin{equation*}
(L_G)_{ij}=\begin{cases}
\textrm{ degree of $v_i$} &\text{if}~ i=j;\\
-(\textrm{\# of edges between $v_i$ and $v_j$}) &\text{if}~ i \neq j,
\end{cases}
\end{equation*}
where $(L_G)_{ij}$ denotes the $(i,j)$th entry of the matrix $L_G$, 
and the degree of a vertex $v$ is the number of edges incident to $v$. 
For more details and background, refer to \cite[Section 2.1]{corry2018divisors}. 
We note that for directed graphs, one can define $L_G$ by using out-degrees (Definition \ref{definition: directed laplacian}).

One can use the Laplacian matrix $L_G$ to compute $\Pic(G)$ of a graph $G$ in \cref{picDefinition}:
Any chip configuration corresponds to a vector $\mathbf{v} \in \mathbb{Z}^{V(G)}$. 
The chip configuration $\mathbf{v}'$ obtained by a lending move (resp.\ borrowing move) at a vertex $v_i$ from $\mathbf{v}$ corresponds to the following computations in $\Z^{V(G)}$.  
\begin{equation}\label{eq: introduc1}
\mathbf{v}'=\mathbf{v} - L_G^T \, e_i \quad (\textrm{resp.\ } \mathbf{v}'=\mathbf{v} + L_G^T \, e_i),
\end{equation}
where $e_i$ denotes the $i$th standard (column) basis of $\Z^{V(G)}$ and the superscript $~^T$ denotes the transpose of a matrix.
Under this setting, the matrix $L_G^T$
corresponds to a linear map $\mathbb{Z}^{V(G)} \to \mathbb{Z}^{V(G)}$,
and one has the following isomorphisms. 
\begin{equation}\label{eq: two isomorphisms}
\Pic(G)\cong \mathbb{Z}\times \Jac(G)  \textrm{ and } \Pic(G)\cong \textrm{coker}(L_G^T).
\end{equation}
See \cite[Proposition 1.20, pp~17-18]{corry2018divisors} for the isomorphisms in \cref{eq: two isomorphisms}. 
Eq.~\eqref{eq: introduc1} allows one to extend the game to an arbitrary $n \times n$ integer matrix, replacing the role of $L_G$, 
and this direction of study was conducted under the name of \emph{Avalanche-finite} matrices in \cite[Section 6] {klivans2018mathematics}. 

The \emph{abelian sandpile model} is another combinatorial game on graphs. 
The rule is similar to the chip-firing game. 
A noticeable difference 
is that 
for the abelian sandpile model, each vertex should maintain a nonnegative number of chips throughout the game. 
The game stops when one can no longer fire chips. 
See \cite[Section 6]{corry2018divisors} for details and \cite{holroyd2008chip} for its generalizations for directed graphs.

In this paper, we define the \emph{Picard group} for a \emph{directed} graph $G$ by using the second isomorphism in \cref{eq: two isomorphisms}, and the \emph{Jacobian} of $G$ the torsion subgroup of the Picard group, see \Cref{definition: directed picard groups and jacobians}. 
Our definition generalizes the definition for undirected graphs, but we no longer have the isomorphism $\Pic (G) \cong \mathbb{Z} \times \Jac (G)$ in general. That is, the rank of $\Pic(G)$ need not be $1$. 

There has been a body of work devoted to the study of Picard groups and Jacobians for undirected graphs.
Though their extensions to directed graphs may diverge depending on one's purpose,
most generalizations agree for the class of a directed graphs having a \emph{global sink} (or a vertex that can play the role of a ``sink'')
\footnote{
Here the word ``sink'' means that a vertex of a directed graph that can be reached from any other vertex. 
In some books such as \cite[pp 201]{harary}, this is the defintion of a (global) sink.}, 
see \cite[Remark 2.12]{holroyd2008chip}.  
Several subclasses of our results satisfies this global sink condition.
Such a tight interplay between combinatorics and algebra was the one of the key motivations of our research. 

Our study focuses on the understanding both the free and torsion part of Picard groups. 
In particular, how these groups vary in families and how they compare to the Picard groups and Jacobians of their underlying directed graphs. 
Below we summarize our main results.

Wagner in \cite[Corollary 3.5]{wagner2000critical} proved that the rank of the Picard group of a directed graph is the number of terminal strong components. 
We prove that the Picard group of a directed tree is torsion free. 
By combining these results, we have the following theorem. 

\begin{nothm}[{\Cref{proposition: tree proposition}}]
The Picard group of any directed tree is free, and its rank is determined by the number of terminal strong component. 
\end{nothm}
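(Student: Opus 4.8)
The plan is to work entirely through the presentation $\Pic(T)\cong\operatorname{coker}(L_T^{T})$ from \cref{eq: two isomorphisms}, which already exhibits $\Pic(T)$ as a finitely generated abelian group. Such a group is free if and only if it is torsion-free, and Wagner's theorem supplies the rank as the number of terminal strong components; hence it suffices to prove that $\Jac(T)=0$, i.e.\ that the torsion subgroup of $\operatorname{coker}(L_T^{T})$ vanishes. Writing $r=\rank(L_T)$, this is equivalent to showing that the Smith normal form of $L_T$ is $\operatorname{diag}(1,\dots,1,0,\dots,0)$, which in turn amounts to exhibiting an $r\times r$ minor of $L_T$ equal to $\pm 1$: since $d_1\cdots d_r$ equals the $\gcd$ of all $r\times r$ minors, a single unit maximal minor forces every nonzero invariant factor to be $1$.

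First I would set up an induction on $|V(T)|$, the inductive step deleting a leaf $v$ with unique tree-neighbor $u$ and relating $\snf(L_T)$ to $\snf(L_{T-v})$. The orientation of the edge at $v$ splits into two cases. If $v$ carries an out-edge (a source-leaf $v\to u$ or a bidirectional leaf $v\leftrightarrow u$), then the diagonal entry $(L_T)_{vv}$, equal to the out-degree of $v$, is $1$, while column $v$ has its only other possible nonzero entry at position $(u,v)$. Pivoting on this $1$ by integer row and column operations clears row and column $v$ and leaves precisely the Laplacian $L_{T-v}$ of the smaller tree, so $\snf(L_T)=[1]\oplus\snf(L_{T-v})$ and the inductive hypothesis applies directly. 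The remaining case, a pure sink-leaf ($u\to v$ only), is the crux: here row $v$ is identically zero, the diagonal is $0$, and the only natural pivot is the off-diagonal entry $(L_T)_{uv}=-1$.

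The hard part will be this pure sink-leaf case, where pivoting on $(L_T)_{uv}=-1$ forces an \emph{asymmetric} reduction. One clears row $u$ using column $v$ (which is supported only on row $u$, so no other row is disturbed) and then removes row $u$ and column $v$, splitting off a $[1]$ and, since row $v$ is zero, a $[0]$. What survives is not $L_{T-v}$ but $L_{T-v}$ with its $u$-th row deleted, and torsion-freeness is \emph{not} preserved under arbitrary row deletion; so the bare inductive hypothesis is insufficient and the forest structure must be used. I expect to close this gap by proving the slightly stronger statement that for any directed forest $F$ and any vertex $u$, deleting row $u$ from $L_F$ still leaves all nonzero invariant factors equal to $1$. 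This rests on the structural fact that each nonzero row of the out-degree Laplacian has the form $\Deg^{+}(w)\,e_w-\sum_{w\to x}e_x$ and thus contains a $-1$ in some out-neighbor column: using the tree structure one selects, for the retained nonzero rows, an injective system of out-neighbor columns — an in-directed spanning forest rooted at the sink/deleted vertices — giving a submatrix that is triangular with $\pm1$ diagonal in a depth ordering, hence a maximal minor equal to $\pm1$. Equivalently, one may invoke the all-minors (Chaiken) matrix-tree theorem: choosing a transversal $S$ of the terminal strong components, $\det L_T[V\setminus S,\,V\setminus S]$ counts the spanning in-forests of $T$ rooted at $S$, and for an underlying tree this count is exactly $1$. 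Either route produces the required unit maximal minor, so $\Jac(T)=0$; together with Wagner's rank formula this shows $\Pic(T)$ is free of rank equal to the number of terminal strong components.
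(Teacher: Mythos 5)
Your reduction of the pure sink-leaf case is correct: pivoting on $(L_T)_{uv}=-1$ does split off a unit and a zero row and leaves exactly $L_{T-v}$ with row $u$ deleted, and the strengthened statement you formulate (all nonzero invariant factors of a row-deleted forest Laplacian equal $1$) is in fact true. The gap is that \emph{both} arguments you offer for that statement fail on concrete directed trees. For the injective out-neighbor selection: consider the in-directed star with arrows $a\to z$ and $b\to z$; this exact configuration arises inside your induction, as $L_{T-v}$ with row $u$ deleted, when $T$ has arrows $a\to z$, $b\to z$, $z\to v$ and you delete the sink-leaf $v$ with $u=z$. The two retained nonzero rows $e_a-e_z$ and $e_b-e_z$ have $z$ as their \emph{only} out-neighbor, so no injective system of out-neighbor columns exists at all; the unit maximal minors here (e.g.\ the identity on columns $a,b$) necessarily use diagonal, out-degree columns, which your construction forbids. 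For the Chaiken route: take $T$ with arrows $2\to 1$ and $2\to 3$. Its terminal strong components are $\{1\}$ and $\{3\}$, so $S=\{1,3\}$ and $\det L_T[V\setminus S,V\setminus S]=\det(2)=2$, in agreement with Chaiken because there are \emph{two} spanning in-forests rooted at $S$ (choose $2\to1$ or $2\to3$). So the count is not $1$ for underlying trees in general, and the specific maximal minor you propose is not a unit; in this example $\Jac(T)=0$ only because $I_1(L_T)=\langle 1\rangle$ is witnessed by other $1\times 1$ minors.

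For comparison, the paper closes this same case without ever leaving the class of forest Laplacians, which is why it needs no row-deletion lemma. After clearing row $u$ with column operations against the $-1$ in column $v$ (so that row becomes $(0,\dots,0,1,-1)$ supported on columns $u,v$), it does \emph{not} delete the row and column; instead it subtracts that row from the zero row $v$, and the resulting matrix is again the Laplacian of a directed forest $T'$, namely $T$ with all out-arrows of $u$ deleted except $u\to v$ and with the edge between $u$ and $v$ made bi-directional. Then the bi-directional-leaf case (your easy case, i.e.\ \Cref{proposition: gluing an arrow proposition}) together with the inductive hypothesis, applied componentwise if $T'$ is disconnected, finishes the proof. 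To repair your write-up, either adopt this maneuver, or prove your row-deletion lemma by a genuine argument, e.g.\ a further leaf-deletion induction on pairs consisting of a directed forest and a retained subset of its nonzero rows (splitting into the cases where the leaf's row is retained, where its column is untouched, and where its neighbor's row holds the only entry in its column); that induction does go through, but the triangular-selection and Chaiken shortcuts as stated do not.
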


The Picard group of the undirected cycle graph $C_n$ is $\Z \times \mathbb{Z}_n$ \cite[Corollary 4]{glass2020chip}. 
We prove that for any $k$ such that $1 \le k \le n$, there exists a directed cycle graph on $n$-vertices whose Picard group is isomorphic to $\Z \times \Z_k$. 

\begin{nothm}[{Theorem \ref{theorem: single term}}]
Let $n \geq 3$ and $C_n$ be a cycle graph with $n$ vertices. For any $1 \leq k \leq n$, there exists an orientation of $C_n$ such that $\Pic(C_n) \cong \Z \times \Jac(C_n)$, where $\Jac(C_n) = \mathbb{Z}_k$. 
\end{nothm}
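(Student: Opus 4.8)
The plan is, for each $k$ with $2 \le k \le n$, to build one explicit orientation and then to compute its Picard group as $\operatorname{coker}(L^T)$ by an integral elimination that collapses the directed cycle to an \emph{undirected} cycle on $k$ vertices, whose Picard group is already known to be $\Z \times \Z_k$ by \cite[Corollary 4]{glass2020chip}. The case $k = 1$ is separate and immediate: the fully directed cycle $v_1 \to v_2 \to \cdots \to v_n \to v_1$ has Laplacian $I - P$ with $P$ the cyclic-shift permutation matrix, and the relations $x_i = x_{i+1}$ identify all generators, so $\Pic \cong \Z$ and $\Jac \cong \Z_1$.

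For $k \ge 2$, label the vertices $v_1, \ldots, v_n$ cyclically, write $e_i = \{v_i, v_{i+1}\}$ with indices mod $n$, and orient as follows: $e_1$ as $v_2 \to v_1$; each of $e_2, \ldots, e_{k-1}$ bidirectionally; and each of $e_k, \ldots, e_n$ in the forward direction $v_i \to v_{i+1}$ (in particular $e_n$ is $v_n \to v_1$). First I would record the out-degrees: $v_1$ has out-degree $0$ and is a sink, the vertices $v_2, \ldots, v_k$ have out-degree $2$, and the vertices $v_{k+1}, \ldots, v_n$ have out-degree $1$. Since every vertex reaches the sink $v_1$, the unique terminal strong component is $\{v_1\}$, so $\Pic$ has rank $1$ by \cite[Corollary 3.5]{wagner2000critical}; this will also fall out of the computation below and gives the splitting $\Pic(C_n) \cong \Z \times \Jac(C_n)$.

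Next I would view $\operatorname{coker}(L^T) = \Z^n/(\text{row span of } L)$ with generators $x_1, \ldots, x_n$. The out-degree-$1$ vertices $v_{k+1}, \ldots, v_n$ contribute the unimodular relations $x_i - x_{i+1} = 0$ and $x_n - x_1 = 0$, which I use to eliminate $x_{k+1}, \ldots, x_n$ by setting each equal to $x_1$. The rows from $v_2, \ldots, v_k$ then read $-x_1 + 2x_2 - x_3 = 0$, the interior relations $-x_{i-1} + 2x_i - x_{i+1} = 0$, and $-x_{k-1} + 2x_k - x_1 = 0$; these are exactly rows $2$ through $k$ of the Laplacian $L_{C_k}$ of the undirected $k$-cycle on $x_1, \ldots, x_k$. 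The key observation is that the single missing relation, row $1$ of $L_{C_k}$, is redundant because the rows of any graph Laplacian sum to zero, so it lies in the span of the remaining rows. Hence the relations cut out precisely the row span of $L_{C_k}$, and $\operatorname{coker}(L^T) \cong \Z^k/(\text{row span of } L_{C_k}) = \Pic(C_k) \cong \Z \times \Z_k$, giving $\Jac(C_n) \cong \Z_k$.

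I expect the difficulty to be organizational rather than conceptual. The points needing care are the three transition vertices $v_2, v_k, v_n$, where the edge type changes and the out-degree bookkeeping must be done exactly, and the two degenerate regimes: for $k = 2$ there are no bidirectional edges, so after elimination only the single relation $2(x_2 - x_1) = 0$ survives and yields $\Z_2$; for $k = n$ vertex $v_n$ has out-degree $2$, so no generators are eliminated and rows $2, \ldots, n$ already present $L_{C_n}$ directly. A last point to verify is that every elimination is legitimate over $\Z$: each relation used to discard a generator carries a unit coefficient $\pm 1$ on that generator, so no denominators are introduced and the cokernel is computed honestly as an abelian group, which is what forces the torsion to be exactly the cyclic group $\Z_k$ rather than merely a group of order $k$.
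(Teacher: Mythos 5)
Your proposal is correct, but it takes a genuinely different route from the paper's. You fix, for each pair $(n,k)$, a single explicit orientation --- one arrow $v_2 \to v_1$, bi-directional arrows along $v_2,\dots,v_k$, and a directed path $v_k \to v_{k+1} \to \cdots \to v_n \to v_1$ --- and compute $\operatorname{coker}(L^T)$ in one shot: the out-degree-one rows have unit pivots and eliminate $x_{k+1},\dots,x_n$, and the surviving relations are rows $2,\dots,k$ of the undirected cycle Laplacian $L_{C_k^{un}}$, whose first row is redundant, so the cokernel is $\Pic(C_k^{un}) \cong \Z \times \Z_k$. The paper instead argues by induction on $n$: it shows that degree zero and degree one extensions preserve the Picard group (Lemma \ref{lemma: key lemma for cycles}), takes Example \ref{example: example c3} as the base case, and supplies the two top values $\Z_n$ and $\Z_{n+1}$ by a separate matrix reduction (Lemma \ref{lemma: obj4}) and by the undirected cycle, respectively. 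Both arguments bottom out in the same two facts --- $\Pic(C_k^{un})\cong \Z\times\Z_k$ and the redundancy of one row of the undirected Laplacian (Remark \ref{remark: obj2}) --- and your family of graphs is precisely a family of cycles with two opposite paths (one path of length one and $k-2$ bi-directional arrows), so your computation amounts to a direct, induction-free proof of the corresponding special case of Theorem \ref{theorem: two path}. What the paper's route buys is reusable machinery: the extension lemma is used again to prove Theorem \ref{theorem: two path} and Theorem \ref{cycleGlobalSinkPic}, and it gives the containment $\Gamma(n)\subset\Gamma(n+1)$. What your route buys is brevity and explicitness, including hands-on treatment of the degenerate cases $k=1,2$. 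One small imprecision to fix: under the paper's out-degree convention it is \emph{not} true that the rows of any (directed) graph Laplacian sum to the zero vector --- only the row sums vanish (Theorem \ref{kernelLaplician}); the fact you actually need, that the rows of $L_{C_k^{un}}$ sum to $\mathbf{0}$, holds because that Laplacian is symmetric with zero row sums (Remark \ref{laplaceFactUndirected}), which is exactly the setting in which you invoke it, so the argument stands as written.
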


In addition, we provide a way to determine the Picard group of a directed cycle graph having a global sink based on an invariant of the graph, see \Cref{uniqueDoubleChain}. 

Biggs determined the Jacobian of undirected wheel graphs in \cite[Theorem 9.2]{biggs1999chip}.
Consider the three classes of wheel graphs denoted by $W_n, W_n'$, and $W_n''$. 
The rims of all three classes have bi-directional edges.\footnote{In our notation the subscript $n$ denotes the number of vertices of a directed wheel graph.}
All the edges to the spokes in $W_n, W_n'$, and $W_n''$, are bi-directions, pointed to the axle, and pointed away from the axle, respectively.  
See \Cref{example: wheel graph orientations} for  pictorial examples. 
Biggs' result corresponds to the class $W_n$. 

\begin{nothm}[{Propositions \ref{proposition: wheel1} and \ref{proposition: wheel2}}]  
With the  above notations, we have the following. 
\begin{enumerate}[$(1)$]
\item 
The Laplacian matrices of $W_n$ and $W_n'$ are row equivalent. In particular, one has 
\begin{equation*}
\Pic (W_n) \cong \Pic (W_n').
\end{equation*}
\item 
One has
\[
\Pic(W_n'')\cong \begin{cases}
\mathbb{Z} \times  \mathbb{Z}_{n-1} \times \mathbb{Z}_{n-1}& \text{if}~ n ~\text{is even};\\
\mathbb{Z} \times \mathbb{Z}_{(n-1)/2} \times \mathbb{Z}_{2(n-1)} & \text{if}~ n ~\text{is odd}.
\end{cases}
\]
\end{enumerate}
\end{nothm}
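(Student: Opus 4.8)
The plan is to work directly with the Laplacian matrices under the out-degree convention of \Cref{definition: directed laplacian}, writing the $n$ vertices as an axle $v_0$ together with rim vertices $v_1,\dots,v_m$ (with $m=n-1$) arranged cyclically, and to compute $\Pic=\operatorname{coker}(L^T)$ as the quotient of $\mathbb Z^{\,n}$ by the integer lattice spanned by the \emph{rows} of the Laplacian.

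For part $(1)$ I would first record the three kinds of rows. Because the rim carries bidirectional edges in both $W_n$ and $W_n'$, every rim vertex has out-degree $3$ and its row is \emph{identical} in the two graphs: diagonal $3$, entries $-1$ to its two rim neighbours, and $-1$ to the axle. The graphs differ only in the axle row: in $W_n$ the axle has out-degree $m$, giving the row $(m,-1,\dots,-1)$, while in $W_n'$ the spokes point inward so the axle has out-degree $0$ and its row is zero. The key elementary computation is that the sum of the $m$ rim rows equals $(-m,1,\dots,1)$, whose negative is exactly the axle row of $W_n$. Hence subtracting the sum of the rim rows from the (zero) axle row of $L_{W_n'}$ produces $L_{W_n}$; this is a product of elementary integer row operations, i.e.\ left multiplication by a matrix in $GL_n(\mathbb Z)$, so the image lattice of the transpose is unchanged and $\Pic(W_n)\cong\Pic(W_n')$.

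For part $(2)$ I would present $\operatorname{coker}(L_{W_n''}^T)$ by generators $a=e_0$ (axle) and $e_1,\dots,e_m$ (rim) subject to the row relations. The rim vertices now have out-degree $2$, so the rim rows are precisely the rows of the cycle Laplacian $C_m$ and give the relations $2e_i-e_{i-1}-e_{i+1}=0$, while the axle row gives $m\,a-\sum_i e_i=0$. The cycle relations force all consecutive differences $e_{i+1}-e_i$ to equal a single element $t$, so $e_i=e_1+(i-1)t$, and the wrap-around relation yields $mt=0$; substituting into the axle relation gives $m a=m e_1+\binom{m}{2}t$. Putting $b=a-e_1$ leaves $e_1$ as a free generator (the $\mathbb Z$ summand, consistent with the single terminal strong component) and reduces the torsion to the group on $b,t$ with relation matrix $\left(\begin{smallmatrix} 0 & m \\ m & -\binom{m}{2}\end{smallmatrix}\right)$, whose determinant is $-m^2$.

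The crux, and the only place where parity enters, is the Smith normal form of this $2\times2$ matrix: its first invariant factor is the gcd of the entries, $g=\gcd\!\big(m,\binom{m}{2}\big)$, and the second is $m^2/g$. I would finish with the short check that $g=m$ when $m$ is odd (since then $m\mid\binom m2$), giving $\mathbb Z_m\times\mathbb Z_m$, and $g=m/2$ when $m$ is even (since $\binom m2=\tfrac m2(m-1)$ with $m-1$ odd), giving $\mathbb Z_{m/2}\times\mathbb Z_{2m}$; rewriting via $n=m+1$ reproduces the two stated cases. The main obstacle I anticipate is not the reduction, which is mechanical, but making the collapse of the $m$ cyclic rim relations to the single difference $t$ (and the resulting $mt=0$) fully rigorous \emph{in the quotient group} rather than over $\mathbb Q$, and then correctly tracking the parity split in the final Smith normal form.
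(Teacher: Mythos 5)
Your proposal is correct, and part (1) is essentially the paper's argument (the paper phrases it as adding all rows to the axle row of $L_{W_n}$ using $\mathbf{1}\cdot L_{W_n}=\mathbf{0}$; you run the same operation in reverse). For part (2), however, you take a genuinely different route. The paper works with Fitting ideals: it reduces $L_{W_n''}$ by row/column operations to a bordered cycle-Laplacian, computes $I_{n-1}=\langle (n-1)^2\rangle$ from a determinant, and then identifies the relevant $(n-2)$-minors as the entries of the solution of $M_{n-2}\mathbf{x}=\det(M_{n-2})\mathbf{1}^T$ via Cramer's rule, solving that recurrence explicitly by a quadratic polynomial $x_k=ak^2+bk$ and taking a gcd --- this is where the even/odd split appears there. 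You instead compute $\operatorname{coker}(L^T)$ by a presentation: the unimodular change of generators $f_1=e_1$, $f_i=e_i-e_{i-1}$ ($2\le i\le m$) turns the $m$ cyclic rim relations into $f_2=\cdots=f_m=:t$ together with $mt=0$ (both wrap-around relations $R_1,R_m$ collapse to $mt=0$), and the axle relation becomes $m(a-e_1)=\binom{m}{2}t$; setting $b=a-e_1$ splits off $\Z\{e_1\}$ and leaves the relation matrix $\begin{pmatrix} 0 & m \\ m & -\binom{m}{2}\end{pmatrix}$, whose invariant factors are $\gcd\bigl(m,\binom{m}{2}\bigr)$ and $m^2/\gcd\bigl(m,\binom{m}{2}\bigr)$. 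Your gcd case analysis is right ($m\mid\binom{m}{2}$ iff $m$ is odd), and it matches the stated cases under $n=m+1$. The rigor issue you flag is real but routine: it is exactly the unimodular substitution above, so nothing fails. What each approach buys: yours is more elementary and makes the parity split transparent as $\gcd\bigl(m,\tfrac{m(m-1)}{2}\bigr)$, with no minors or Cramer's rule; the paper's minor/Fitting-ideal machinery is heavier here but is the same toolkit it reuses for the multipartite computations, and it outputs the Smith normal form (not just the isomorphism type) directly.
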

It is interesting to note that the key arguments in item (2) use Cramer's rule. 
Indeed, we solve a certain quadratic equation whose solutions correspond to the minors of a matrix in question, and this explains the even and odd pattern, see \Cref{lemma: M_nCremer}.

Our last result is on certain multipartite directed graphs, named single-flow directed multipartite graphs. 
A \emph{single-flow directed multipartite graph} is a multipartite directed graph, 
where edges are directed to a single direction such as the forward propagation of a neural network. 
In the case where a single-flow directed multipartite graph has two or three layers, we provide a complete description of their Picard groups in terms of the number of vertices in each layer (\Cref{bipartitePic} and \Cref{multipartite3Pic}).


This paper is organized as follows. In Section \ref{section: preliminaries}, we review basic definitions and properties. 
In Sections \ref{section: trees}--\ref{section: conjecture}, we prove the results on tree, cycle, wheel and multipartite directed graphs listed above, respectively.

\medskip

\noindent\textbf{Acknowledgments:}~ This research was supported in part by the high performance computing resources provided by Information Technology Services at California State University San Bernardino.\footnote{This work was supported in part by NSF awards CNS-1730158, ACI-1540112, ACI-1541349, OAC-1826967, OAC-2112167, CNS-2120019, the University of California Office of the President, and the University of California San Diego’s California Institute for Telecommunications and Information Technology/Qualcomm Institute. Thanks to CENIC for the 100Gbps networks.} J.~Jun and M.~Pisano were partially supported by Research and Creative Activities (RSCA) at SUNY New Paltz. Lastly, several examples were computed with the help of {\tt{SageMath}} \cite{sagemath}. 

\section{Preliminaries}\label{section: preliminaries}

For a graph $G$, we use $V(G)$ and $E(G)$ to denote the vertex set and edge set of the graph $G$, respectively, and $V(G)$ is non-empty and finite. 
In the sequel, we treat undirected graphs as directed graphs having bi-directional edges. 
To emphasize this, we use the term \emph{arrow} for directed edges of directed graphs. 
Thus, an undirected graph is a directed graph with all its arrows are bi-directional. 
For a graph $G$, an arrow $e_{vw}$ between vertices $v$ and $w$ of $G$ can be one-directional or bi-directional. 
We use the notation of $e_{\overrightarrow{vw}}$ for the arrow from $v$ to $w$ and $e_{\overleftrightarrow{vw}}$ for a bi-directional arrow. 
For a vertex set $V(G) = \{ v_1,\dots, v_n \}$, we also use $\{ 1, \dots, n \}$ to denote the same vertex set.  

For a matrix $M$, we use the notation of $M = (m_{ij})$, where $m_{ij}$ denotes the $(i,j)$th entry of $M$.
The bold symbols $\mathbf{1}_k$ and $\mathbf{0}_k$ denote the row matrix of size $k$ consisting of $1$'s and $0$'s, respectively. 
We also use $\mathbf{1}$ and $\mathbf{0}$ if the size is clear from context. 
Lastly, the symbols $\mathbf{0}_{k \times k}$ and $\mathbf{1}_{k \times k}$ 
denote the zero matrix and the matrix consisting of $1$'s of size $k \times k$.

\begin{mydef}[{Laplacian matrix}]\label{definition: directed laplacian}
For a directed graph $G$ with vertex set $V(G) = \{v_1,v_2,\dots,v_n\}$, 
the \emph{Laplacian matrix} of $G$, denoted by $L_G = (l_{ij})$, is an $n\times n$ matrix whose $(i,j)$th entry is defined by 
\begin{equation*}
l_{ij}=\begin{cases}
\textrm{ the number of outgoing arrows of $v_i$} & \text{if}~ i=j,\\
-(\textrm{the number of arrows from $v_i$ to $v_j$}) & \text{if}~ i \neq j.
\end{cases}
\end{equation*}
\end{mydef}

\begin{remark}
For a directed graph $G$, the Laplacian $L_G$ equals $D_G - A_G$, 
where $D_G$ is the (out-)degree matrix and 
$A_G$ is the adjacency matrix of $G$. Note that for directed graphs, we only record the outgoing arrows in $D_G$ and $A_G$. 
\end{remark}

\begin{mydef}[{Sink} {\cite[pp 3]{holroyd2008chip}}]\label{defSink}
A vertex of a directd graph $G$ is called a \emph{sink} if it does not have any outgoing arrows (including bi-directional arrows). 
That is, the out-degree of the vertex is zero. 
We say a sink $v$ is \emph{global} (or a \emph{global sink}) 
if there exists a directed path from any vertex, which is not $v$, of $G$ to $v$. 
\end{mydef}

We note that if a directed graph has a global sink, then it is the unique sink of the directed graph.

Let $G$ be a directed graph. 
The \emph{underlying undirected graph} or simply \emph{underlying graph} is the graph $\overline{G}$ having the same vertex set as $G$ and for $v_i$ and $v_j$ of $\overline{G}$, there exist a bi-directional arrow between $v_i$ and $v_i$ if and only if there exists an arrow between $v_i$ and $v_i$ (regardless of the direction). 
By the \emph{orientation} of a graph, we mean the edge (or arrow) set of the graph.

\begin{myeg}\label{example: laplacian}
The graphs below have the same underlying undirected graph with different orientations. 
\begin{equation*}
\begin{array}{ccc}
T=\left(\begin{tikzcd}[row sep=0.4cm, column sep=0.4cm]
& 1 \arrow[d] & \\
2 \arrow[r]& 3 \arrow[u] \arrow[r]\arrow[l] \arrow[d] & 4 \arrow[l]\\
& 5 \arrow[u] &
\end{tikzcd} \right)
&T'=\left(\begin{tikzcd}[row sep=0.25cm, column sep=0.25cm]
& 1 \arrow[d] & \\
2 \arrow[r]& 3 & 4 \arrow[l]\\
& 5 \arrow[u] &
\end{tikzcd} \right)
&T'' = \left(\begin{tikzcd}[row sep=0.25cm, column sep=0.25cm]
& 1 & \\
2 & 3 \arrow[u] \arrow[r]\arrow[l] \arrow[d]& 4 \\
& 5  &
\end{tikzcd} \right)\\
~\\
L_T = \begin{bmatrix}
1& 0& -1&0 &0 \\
0 & 1 & -1 & 0 &0\\
-1& -1& 4& -1 &-1\\
0& 0&-1&1 & 0\\
0& 0&-1&0 & 1
\end{bmatrix}
&L_T' = \begin{bmatrix}
1& 0& -1&0 &0 \\
0 & 1 & -1 & 0 &0\\
0& 0& 0& 0 &0\\
0& 0&-1&1 & 0\\
0& 0&-1&0 & 1
\end{bmatrix}
& L_T'' = \begin{bmatrix}
0& 0& 0&0 &0 \\
0 & 0 & 0 & 0 &0\\
-1& -1& 4& -1 &-1\\
0& 0&0&0 & 0\\
0& 0&0&0 & 0
\end{bmatrix}.
\end{array}
\end{equation*}
\end{myeg}

\begin{mydef}[{Picard group and Jacobian}]\label{definition: directed picard groups and jacobians}
Let $G$ be a directed graph. 
The \emph{Picard group} $\Pic(G)$ is the cokernel of $L_G^T$ up to isomorphism. 
The \emph{Jacobian} $\Jac(G)$ of $G$ is the torsion subgroup of $\Pic(G)$. 
\end{mydef}

\begin{remark}
Since the Picard group is the cokernel of the map $L_G^T \colon \mathbb{Z}^{|V(G)|} \to \mathbb{Z}^{|V(G)|}$, it is a finitely generated abelian group. 
This in turn implies that Jacobian is also finitely generated.
\end{remark}

\begin{mydef}
We say $m$ by $n$ matrices $M$ and $N$ are \emph{equivalent} if there exist invertible matrices $P$ of size $m$ and $Q$ of size $n$ such that $M = PNQ$.
\end{mydef}

\begin{mydef}[Smith normal form]
Suppose $M \in \operatorname{Mat}_{n \times n}(R)$, where $R$ is a commutative ring.
The Smith normal form of $M$,
denoted by  $\snf(M) = (d_{ij})$, 
is an $n$ by $n$ diagonal matrices with entries in $R$ such that 
\begin{enumerate}
\item $\snf(M)$ is equivalent to $M$,
\item $d_{ii} | d_{i+1,i+1}$ for $i = 1, \dots, n-1$, and
\item $d_{ij} = 0$ if $i\neq j$. 
\end{enumerate}
\end{mydef}

\begin{remark}
The Smith normal form of a matrix is unique up to associates if exists. The existence of the Smith normal form of a matrix $M$ depends on $M$ and the ring $R$. 
Though $M$ need not have a Smith normal form in general, 
Smith normal forms exist for all matrices over a principal ideal domain, for instance, $\mathbb{Z}$.
Thus, one can compute the Picard group of a graph $G$, by computing the Smith normal form of $L_G^T$. 
\end{remark}

There exists an algorithm to compute the Smith normal form of a matrix over a principal ideal domain.
The following theorem is an ideal theoretic characterization of the Smith normal form, and we use this fact several times in this note.
For $M \in M_{m \times n}(R)$, 
$I_k(M)$ denotes the ideal generated by $k \times k$ minors of $M$, where $I_k(M) = 0$ if $k > \min\{m,n\}$ and $I_k = \angles{1}$ if $k \le 0$. 

\begin{mytheorem}[{cf. \cite[Theorem 2.4]{stanley2016smith}}]\label{theorem: gcd theorem}
Let $R$ be a principal ideal domain, $M \in \operatorname{Mat}_{n \times n}(R)$, and $\snf(M) = (d_{ij})$. 
Then we have the following. 
\begin{enumerate}[$(1)$]
\item 
For $1\leq k \leq n$, $I_k(M) = \langle d_{11} \cdots d_{kk} \rangle$ and 
\item $\snf(M) = \snf(M^T)$.
\end{enumerate}
\end{mytheorem}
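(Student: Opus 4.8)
The plan is to deduce both statements from a single structural fact: the determinantal ideals $I_k(\,\cdot\,)$ are invariant under the equivalence relation $M \mapsto PMQ$ with $P,Q$ invertible, and they are easy to read off from a diagonal matrix. Since $\snf(M)$ is by definition equivalent to $M$, once we know that $I_k$ is such an invariant we may replace $M$ by its Smith normal form whenever convenient, and both parts reduce to a computation with diagonal matrices.

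First I would prove the key lemma that $I_k(AB) \subseteq I_k(A) \cap I_k(B)$ for any matrices $A,B$ of compatible sizes, which is a direct consequence of the Cauchy--Binet formula: every $k \times k$ minor of $AB$ indexed by a row set $S$ and a column set $T$ equals $\sum_U \det(A_{S,U})\det(B_{U,T})$, a finite sum over $k$-subsets $U$, and hence lies in both $I_k(A)$ and $I_k(B)$. When $P$ is invertible, writing $M = P^{-1}(PM)$ gives $I_k(M) \subseteq I_k(PM) \subseteq I_k(M)$, so $I_k(PM) = I_k(M)$; the same argument on the right yields $I_k(PMQ) = I_k(M)$. In particular $I_k(M) = I_k(\snf(M))$.

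For part $(1)$ I would compute $I_k$ of the diagonal matrix $\snf(M) = (d_{ij})$ directly. A $k \times k$ submatrix of a diagonal matrix has nonzero determinant only when the chosen row and column index sets coincide, in which case the minor equals $\prod_{i \in S} d_{ii}$; hence $I_k(\snf(M))$ is generated by the products $\prod_{i \in S} d_{ii}$ over all $k$-subsets $S$. The divisibility chain $d_{11} \mid d_{22} \mid \cdots$ forces $d_{11}\cdots d_{kk}$ to divide every such product --- for $S = \{i_1 < \cdots < i_k\}$ one has $i_j \geq j$ and therefore $d_{jj} \mid d_{i_j i_j}$ --- while $d_{11}\cdots d_{kk}$ is itself the generator coming from $S = \{1,\dots,k\}$. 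Consequently $I_k(\snf(M)) = \langle d_{11}\cdots d_{kk}\rangle$, and combining this with $I_k(M) = I_k(\snf(M))$ proves $(1)$. For part $(2)$, transposition sends the $k \times k$ minor of $M$ on $(S,T)$ to the minor of $M^T$ on $(T,S)$ without changing its value, so $M$ and $M^T$ have literally the same collection of $k \times k$ minors and $I_k(M) = I_k(M^T)$ for every $k$; by $(1)$ the diagonal entries of a Smith normal form are determined up to associates by the chain $I_1 \subseteq I_2 \subseteq \cdots$ (each $d_{kk}$ being fixed up to a unit by the pair $I_{k-1} \subseteq I_k$), so $\snf(M)$ and $\snf(M^T)$ share the same diagonal up to units, and uniqueness of the Smith normal form gives $\snf(M) = \snf(M^T)$.

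The main obstacle is the bookkeeping in the Cauchy--Binet step: one must apply the formula over a general commutative ring (it is valid there, being a polynomial identity in the entries) and track the row/column index-set conventions carefully. A secondary point to verify cleanly is that $I_k$ being an equivalence-invariant, together with uniqueness of $\snf$ up to associates, genuinely makes the determinantal ideals a complete set of invariants of the diagonal, which is what legitimizes the comparison of $\snf(M)$ and $\snf(M^T)$ in part $(2)$.
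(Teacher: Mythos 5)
Your proof is correct and takes essentially the same route as the paper: the paper disposes of item (1) by citing Stanley's Theorem 2.4 (whose standard proof is exactly your Cauchy--Binet invariance argument together with the diagonal computation), and obtains item (2) from item (1) via $I_k(M) = I_k(M^T)$, precisely as you do. The only point to polish is your claim that the pair $I_{k-1} \subseteq I_k$ fixes $d_{kk}$ up to a unit --- when $I_{k-1}(M) = 0$ one cannot cancel in the domain $R$, but in that case the divisibility chain forces $d_{kk} = 0$ for both $M$ and $M^T$, so the conclusion stands.
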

\begin{proof}
Item (1) is in \cite[Theorem 2.4]{stanley2016smith}, and item (2) follows from item (1) since $I_k (M) = I_k (M^T)$ for any matrix $M$. 
\end{proof}

By this theorem, when determining Picard groups we use $L_G$ instead of $L_G^T$. 
The following remark is straightforward but useful for reducing the size of a  matrix. 

\begin{rmk}\label{remark: reduction}
Suppose $M$ and $N$ are matrices such that 
$N = \left[ \begin{array}{c|c}
M & * \\
\hline
0 & \pm 1
\end{array} \right]
~{or}~
\left[ \begin{array}{c|c}
M & 0 \\
\hline
* & \pm 1
\end{array} \right],$
where $*$ denotes arbitrary entries. 
For any $k$, $I_k(M) = I_{k+1}(N)$, and the cokernels of $M$ and $N$ are isomorphic. 
\end{rmk}

\begin{myeg}\label{example: example rank}
Let $T, T'$, and $T''$ be as in Example \ref{example: laplacian}. 
Their the Smith normal forms are the following matrices. 
\begin{equation*}
\snf(L_T) = \snf(L_{T'})=\left[\begin{array}{c|c}
I_4 & 0 \\ \hline
0 & 0
\end{array}\right]
~\text{and}~
\snf(L_{T''})=\left[\begin{array}{c|c}
I_1 & 0 \\ \hline
0 & \mathbf{0}_{4\times 4}
\end{array}\right].
\end{equation*}
In particular, $\Pic(T) \cong \Pic(T') \cong \mathbb{Z}$ and $\Pic(T'') \cong \mathbb{Z}^4$. 
\end{myeg}

\begin{theorem} \label{kernelLaplician}
Let $G$ be a directed graph. Then $L_G \cdot \mathbf{1}^T = \mathbf{0}$. 
\end{theorem}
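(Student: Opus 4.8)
The plan is to reduce the matrix identity to a statement about the row sums of $L_G$. Since $\mathbf{1}^T$ is the all-ones column vector, the $i$th entry of the product $L_G \cdot \mathbf{1}^T$ is precisely $\sum_{j=1}^n l_{ij}$, the sum of the entries in the $i$th row of $L_G$. Thus the entire claim is equivalent to showing that every row of $L_G$ sums to zero, and I would phrase the proof as a single computation carried out for an arbitrary fixed index $i$.

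First I would separate the diagonal term from the off-diagonal terms using \Cref{definition: directed laplacian}:
\[
\sum_{j=1}^n l_{ij} = l_{ii} + \sum_{j \neq i} l_{ij} = (\text{out-degree of } v_i) \;-\; \sum_{j \neq i}\bigl(\text{number of arrows from } v_i \text{ to } v_j\bigr).
\]
The next step is the combinatorial heart of the argument: the total number of outgoing arrows of $v_i$, which defines $l_{ii}$, is obtained by summing the number of arrows from $v_i$ to each other vertex $v_j$. Here I would invoke the standing assumption that graphs have no loops, so there are no arrows from $v_i$ to itself and the out-degree is exactly $\sum_{j \neq i}(\text{number of arrows from } v_i \text{ to } v_j)$. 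Substituting this into the display shows the diagonal term cancels the off-diagonal sum, giving row sum zero.

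Since $i$ was arbitrary, every entry of $L_G \cdot \mathbf{1}^T$ vanishes, establishing $L_G \cdot \mathbf{1}^T = \mathbf{0}$. There is no genuine obstacle here; the only point requiring care is the bookkeeping that the diagonal out-degree and the off-diagonal arrow counts refer to the same set of outgoing arrows, which is exactly where the no-loops hypothesis enters. I would emphasize this identification explicitly rather than treating it as self-evident, since it is the sole place where the definition of the Laplacian is used nontrivially.
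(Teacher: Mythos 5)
Your proof is correct, and it is exactly the verification the paper has in mind: the paper states Theorem \ref{kernelLaplician} without any proof, treating it as immediate from \Cref{definition: directed laplacian}, and your row-sum computation (diagonal out-degree cancelling the off-diagonal arrow counts, with the no-loops convention ensuring the two refer to the same set of outgoing arrows) is precisely that routine argument. Note only that ``outgoing arrows'' here includes bi-directional ones on both counts, as in \Cref{defSink}, so the cancellation goes through for graphs with bi-directional edges as well.
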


\begin{remark}\label{laplaceFactUndirected}
In addition, if $G$ is connected and undirected, then we have 
\begin{enumerate}
\item 
$L_G$ is symmetric and $\mathbf{1}^T \cdot L_G = \mathbf{0}$ ({\cite[Theorem 1]{glass2020chip}}), 
\item 
$\rank L_G = |V(G)| -1$ (cf. \cref{eq: two isomorphisms}), and
\item 
$|\Jac(G)|$ is the number of spanning trees (\cite[Proposition 2.37 and Remark 2.38]{corry2018divisors}).
\end{enumerate}
\end{remark}

\section{Picard groups and Jacobians of directed trees}\label{section: trees}

In this section, we show that the Picard group of any directed tree is torsion free\footnote{A torsion free finitely generated abelian group is free.} (\Cref{proposition: tree proposition}), and it allows one to determine the Picard group (\Cref{picDirectedTree}), thanks to a result of Wagner (\Cref{theorem: wagner}). 


\begin{lem}\label{proposition: gluing an arrow proposition}
Let $G$ be a directed graph. Suppose that $G'$ is the directed graph having the following vertex set and edge set:
\[
V(G') = V(G) \sqcup \{ w \} \textrm{ and }E(G') = E(V) \sqcup \{ \alpha \},
\]
where $\alpha$ is either an arrow from $w$ to some $v$ in $E(V)$ or a bi-directional arrow. Then we have
\[
\Pic(G) \cong \Pic(G').
\]
\end{lem}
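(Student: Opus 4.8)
The plan is to write $L_{G'}$ as a bordered version of $L_G$ and then invoke \Cref{remark: reduction}. By \Cref{theorem: gcd theorem} we may compute Picard groups from $L_{G'}$ rather than $L_{G'}^T$, so it suffices to show $\operatorname{coker}(L_{G'}) \cong \operatorname{coker}(L_G)$. First I would order the vertices of $G'$ so that $w$ is last, write $v = v_j$ for the vertex of $G$ where $\alpha$ attaches, and read off from \Cref{definition: directed laplacian} how adjoining $w$ changes the out-degrees and the adjacencies of the vertices already in $G$.

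In the case where $\alpha$ is a single arrow $w \to v_j$, the vertex $w$ is the source, so no vertex of $G$ acquires a new outgoing arrow and nothing points into $w$. Hence the rows of $L_{G'}$ indexed by $V(G)$ agree with those of $L_G$ with a $0$ in the new $w$-column, while the row for $w$ has out-degree $1$ on the diagonal and a single $-1$ in column $j$. Thus
\[
L_{G'} = \begin{bmatrix} L_G & 0 \\ * & 1 \end{bmatrix},
\]
which is exactly the shape appearing in \Cref{remark: reduction}, immediately giving $\operatorname{coker}(L_{G'}) \cong \operatorname{coker}(L_G)$.

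In the case where $\alpha$ is bi-directional, the arrow also points $v_j \to w$, so the out-degree of $v_j$ rises by one and there is a $-1$ in the $(v_j, w)$ entry. Writing $e_j$ for the $j$-th standard basis vector of $\mathbb{Z}^{|V(G)|}$ and $E_{jj}$ for the matrix with a single $1$ in position $(j,j)$, this reads
\[
L_{G'} = \begin{bmatrix} L_G + E_{jj} & -e_j \\ -e_j^T & 1 \end{bmatrix}.
\]
The key step is a single row operation, namely adding the last row to row $j$: this cancels the extra diagonal $+1$ coming from $E_{jj}$ against the $-1$ of $-e_j^T$ in the $V(G)$-columns, and it turns the top-right entry $-1$ into $0$, producing
\[
\begin{bmatrix} L_G & 0 \\ -e_j^T & 1 \end{bmatrix}.
\]
Since row operations are left multiplication by invertible integer matrices and therefore preserve the cokernel, \Cref{remark: reduction} again yields $\operatorname{coker}(L_{G'}) \cong \operatorname{coker}(L_G)$, and hence $\Pic(G) \cong \Pic(G')$.

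The only place demanding care is the bi-directional case: one must remember that a bi-directional arrow contributes to the out-degree of \emph{both} endpoints, so $v_j$'s diagonal entry is perturbed, and then notice that the natural row operation simultaneously removes this diagonal perturbation and decouples the $w$-column. Everything else is a direct reading of the Laplacian definition together with the cokernel reduction in \Cref{remark: reduction}.
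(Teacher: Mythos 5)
Your proof is correct and follows essentially the same route as the paper's: in the one-directional case both read off the bordered form $\left[\begin{smallmatrix} L_G & 0 \\ * & 1 \end{smallmatrix}\right]$ and apply \Cref{remark: reduction}, and in the bi-directional case both perform the same single row operation (adding the $w$-row to the row of $v$) to cancel the diagonal perturbation and decouple the $w$-column, reducing to the first case. Your explicit $E_{jj}$, $e_j$ notation is just a cleaner bookkeeping of what the paper writes with $v$ labeled as the last vertex of $G$.
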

\begin{proof}
Let $|V(G)|=n$. 
We label the vertices of $G$ as $v_1,v_2,\dots,v_n$. 
First, suppose $\alpha = e_{\overrightarrow{wv}}$. 
Let $L_G=(l_{ij})$ (resp.\ $L_{G'}$) be the Laplacian matrix of $G$ (resp.\ $G'$). 
Then the matrix $L_{G'}$ is of the following form.
\begin{equation}\label{eq: arrow adding matrix}
L_{G'}=\left[\begin{array}{ccc|c|c}
l_{11}&l_{12}&\cdots &l_{1n}&0\\
l_{21}&l_{22}&\cdots &l_{2n}&0\\
\vdots & \vdots &\ddots & \vdots & \vdots \\ \hline
\vdots & \vdots & \cdots&l_{nn} & 0\\ \hline
0&0&\cdots &-1&1\\
\end{array}\right].
\end{equation}
Thus, $L_{G'}$ is of the form 
\begin{equation*} 
\left[\begin{array}{c|c}
L_G & 0 \\
\hline
* & 1
\end{array}\right],
\end{equation*}
and one has $\Pic(G) \cong \Pic(G')$ by \Cref{remark: reduction}.

Next, suppose $\alpha = e_{\overleftrightarrow{wv}}$.
Then we obtain the following Laplacian matrix for $G'$.
\begin{equation}\label{eq: eq two-sided}
L_{G'}=\left[\begin{array}{ccc|c|c}
l_{11}&l_{12}&\cdots &l_{1n}&0\\
l_{21}&l_{22}&\cdots &l_{2n}&0\\
\vdots & \vdots &\ddots & \vdots & \vdots \\ \hline
\vdots & \vdots & \cdots&l_{nn}+1 & -1\\ \hline
0&0&\cdots &-1&1\\
\end{array}\right].
\end{equation}
After adding the last row to the second last row, 
the matrix in \cref{eq: eq two-sided} becomes the matrix in \cref{eq: arrow adding matrix}. 
Therefore, one has $\Pic(G) \cong \Pic(G')$, and this completes the proof.
\end{proof}

For undirected trees $T$, one has $\Pic(T)\cong \mathbb{Z}$. 
This directly follows from the matrix-tree theorem (\cite[Theorem 9.3]{corry2018divisors}). 
However, for directed trees, the rank of $\Pic(T)$ can be arbitrarily large depending the number of terminal strong components (\Cref{definition: terminal strong}) of $T$. 
We show that Picard groups are torsion free in both cases.

\begin{theorem}\label{proposition: tree proposition}
Any directed tree graph $T$ has a torsion-free Picard group. 
That is $\Jac(T)=0$.
\end{theorem}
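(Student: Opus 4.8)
The plan is to recast the statement as a rank-invariance property. Since $\Pic(T)=\operatorname{coker}(L_T^{T})$ and $\snf(L_T)=\snf(L_T^{T})$ by \Cref{theorem: gcd theorem}, I may work with $L_T$ throughout. Writing the nonzero invariant factors of $L_T$ as $d_1\mid\cdots\mid d_r$, the cokernel is torsion free precisely when every $d_i$ is a unit, i.e.\ when $I_r(L_T)=\langle 1\rangle$. Equivalently, $\Jac(T)=0$ if and only if $\rank_{\mathbb{F}_p}(L_T)=\rank_{\mathbb{Q}}(L_T)$ for every prime $p$, that is, if and only if the rank of $L_T$ is the \emph{same over every field}. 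So the goal I would pursue is: the Laplacian of a directed tree has field-independent rank.

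First I would run an induction on $|V(T)|$ using \Cref{proposition: gluing an arrow proposition}. Every tree has a leaf $w$; if the arrow at $w$ points out of $w$ or is bidirectional, the lemma gives $\Pic(T)\cong\Pic(T\setminus\{w\})$, and the inductive hypothesis applies verbatim to the smaller tree. This disposes of all configurations except the one in which \emph{every leaf of $T$ is a sink}, which is where the real work lies and which the lemma cannot reach (the star oriented away from its center, or $b\to a$, $b\leftrightarrow c$, $c\to d$, are typical such examples).

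For the remaining case I would establish rank-invariance directly. If $T$ has no bidirectional arrow it is acyclic, so I can list the vertices in a topological order; then $L_T$ is upper triangular, the row of each sink vanishes identically, and the row of each non-sink has its leftmost nonzero entry on the diagonal. Hence the non-sink rows have pairwise distinct leading positions and are independent over every field, while the sink rows are zero, so $\rank_K(L_T)$ equals the number of non-sinks for every field $K$. When bidirectional arrows are present — they may persist even when all leaves are sinks, as the second example shows — I would group the vertices by strong component, each of which is an undirected subtree joined to the rest by one-directional arrows, and order the components topologically, making $L_T$ block upper triangular. Each terminal component contributes a graph-Laplacian block with zero column sums, supplying one relation (its rows sum to zero over $\mathbb{Z}$, hence over any field) whose reduced block is moreover unimodular by the matrix-tree theorem, a tree having a unique spanning tree (cf.\ \Cref{laplaceFactUndirected}). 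The $t$ such relations, one per terminal component, force the rank drop over every field; the complementary rows I would show to be independent over every field by exhibiting a minor that, via the directed (all-minors) matrix-tree theorem, counts a \emph{single} oriented spanning forest of $T$ and therefore equals $\pm1$.

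The main obstacle is exactly this last step in the presence of bidirectional arrows. The naive choice — deleting one row and the \emph{same} column per terminal component — fails, because a bidirectional edge inflates the oriented-spanning-forest count, so such a principal block can have determinant like $\det\begin{pmatrix}2&-1\\-1&2\end{pmatrix}=3$, which is neither a unit nor even coprime to $3$. One must instead choose \emph{distinct} row and column sets so that the all-minors matrix-tree count collapses to a single forest; for instance in the example $b\to a$, $b\leftrightarrow c$, $c\to d$ the minor on rows $\{b,c\}$ and columns $\{b,a\}$ has determinant $-1$. Carrying out this selection uniformly across all terminal and non-terminal strong components, and verifying that the resulting $(n-t)\times(n-t)$ minor is unimodular, is the crux; once it is in place, rank-invariance over every field — and hence $\Jac(T)=0$ — follows.
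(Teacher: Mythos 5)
Your reformulation is sound: torsion-freeness of $\operatorname{coker}(L_T^T)$ is indeed equivalent, via \Cref{theorem: gcd theorem}, to $I_r(L_T)=\langle 1\rangle$ for $r$ the rank of $L_T$ over $\mathbb{Q}$, i.e.\ to the rank of $L_T$ being the same over every field; and your use of \Cref{proposition: gluing an arrow proposition} to strip away leaves of out-degree one matches the first half of the paper's induction. But your argument is not a proof: in the remaining case (bidirectional arrows present, every leaf a sink) you reduce everything to exhibiting an $(n-t)\times(n-t)$ minor equal to $\pm 1$, on suitably chosen and generally \emph{distinct} row and column sets, and you then declare that carrying out this selection and verifying unimodularity ``is the crux'' and leave it undone. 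That selection is not a routine verification --- it is precisely the content of the theorem in the hard case. Your own example shows why: principal minors fail, since one bidirectional edge already gives $\det\begin{pmatrix}2&-1\\-1&2\end{pmatrix}=3$, so one needs a non-principal choice made coherently across all strong components, together with an argument (say via the all-minors matrix-tree theorem) that the relevant oriented-forest count is exactly one. None of that is supplied.

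There is also a flaw in the case you do treat, where $T$ has no bidirectional arrow. From ``the non-sink rows have pairwise distinct leading positions'' you infer they are independent over every field. That inference is invalid: the leading entries are out-degrees, which can vanish modulo $p$, after which the leading position shifts and collisions become possible --- e.g.\ $\begin{pmatrix}2&1\\0&1\end{pmatrix}$ has distinct leading positions and nonzero diagonal but rank one over $\mathbb{F}_2$. The conclusion does hold for tree Laplacians (for instance because the $i$th row of $L_T$ is $\sum_{i\to j}(e_i-e_j)^T$ and the incidence matrix of a tree has trivial kernel over every field), but some such argument must be made; distinct leading positions alone prove nothing mod $p$. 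For contrast, the paper's proof avoids both difficulties entirely: in the sink-leaf case it performs column operations on the last column of $L_T$ to show $L_T$ is equivalent to the Laplacian of a modified directed tree $T'$ in which the leaf edge has become bidirectional (and which may be disconnected), and then applies \Cref{proposition: gluing an arrow proposition} and the induction hypothesis to $T'$; no matrix-tree input, rank-over-$\mathbb{F}_p$ analysis, or minor selection is needed. If you want to salvage your route, the incidence-matrix observation above handles your first case, but the unimodular-minor selection in the bidirectional case still has to be constructed and proved.
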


\begin{proof}
We use induction on the number of vertices. 
The base case consists of a single vertex with the empty edge set, 
and one has $\Pic (T) \cong \Z$ and $\Jac (T) = 0$. 

Suppose $T$ has $n$ vertices. Choose a leaf vertex\footnote{By a leaf vertex we mean a leaf vertex of the underlying undirected graph of $T$.} 
$v$ and let $T_{del}$ be the tree graph obtained by deleting the vertex $v$ and all the arrows incident to $v$. 
If the out-degree of $v$ is $1$, then by \Cref{proposition: gluing an arrow proposition}, we have
\[
\Pic (T) \cong \Pic (T_{del}) \textrm{ and } \Jac (T) \cong \Jac (T_{del}).
\]
By the induction hypothesis, we have $\Jac (T_{del}) = 0$, and hence $\Jac(T)=0$.

Now, assume that the out-degree of $v$ is $0$. 
Let $w$ be the vertex incident to $v$ and $e_{\overrightarrow{wv}}$ the arrow from $w$ to $v$. 
By labeling $w$ and $v$ to be the last two vertices, we have the following Laplacian matrix.
\begin{equation*}
L_{T}=\left[\begin{array}{ccc|c|c}
l_{11}&l_{12}&\cdots &l_{1,n-1}&0\\
l_{21}&l_{22}&\cdots &l_{2,n-1}&0\\
\vdots & \vdots &\ddots & \vdots & \vdots \\ \hline
\vdots & \vdots & \cdots&l_{n-1,n-1}+1 & -1\\ \hline
0&0&\cdots &0&0\\
\end{array}\right].
\end{equation*}

By subtracting a suitable multiple of the last column of $L_T$ from the rest, 
one sees that $L_T$ is equivalent to the left matrix below which in turn is equivalent to $L$ below:
\begin{equation*}
\left[\begin{array}{ccc|c|c}
l_{11}&l_{12}&\cdots &l_{1,n-1}&0\\
l_{21}&l_{22}&\cdots &l_{2,n-1}&0\\
\vdots & \vdots &\ddots & \vdots & \vdots \\ \hline
0 & 0 & \cdots & 1 & -1\\ \hline
0&0&\cdots &0&0\\
\end{array}\right]
\sim 
L = 
\left[\begin{array}{ccc|c|c}
l_{11}&l_{12}&\cdots &l_{1,n-1}&0\\
l_{21}&l_{22}&\cdots &l_{2,n-1}&0\\
\vdots & \vdots &\ddots & \vdots & \vdots \\ \hline
0 & 0 & \cdots & 1 & -1\\ \hline
0&0&\cdots &-1&1\\
\end{array}\right].
\end{equation*}
Notice that $L$ is the Laplacian matrix of a directed tree graph $T'$ which may or may not be connected.\footnote{We mean that the underlying graph $\overline{T'}$ may or may not be connected.}
Suppose first that $T'$ is not connected. 
The Picard group and Jacobian of $T'$ are the direct sums of the Picard groups and Jacobians of its connected component.
Since the connected components of $T'$ are also directed trees, by the induction hypothesis we are done. 
Suppose $T'$ is connected.
Let $T_{del}'$ denote the graph obtained by deleting the vertex $v$ from $T'$ and its adjacent arrow. 
Then $T'$ is obtained by attaching a bi-directional arrow to $T_{del}'$. 
By induction and \Cref{proposition: gluing an arrow proposition}
\begin{equation*}
\Pic(T') \cong \Pic (T_{del}')~\text{and}~
\Jac(T') \cong \Jac(T_{del}') = 0.
\end{equation*}
Now the result follows as $L$ is the Laplacian matrix of $T'$ and is equivalent to $L_T$.
This completes the proof.
\end{proof}

\begin{mydef}[Terminal strong component]\label{definition: terminal strong}
Let $G$ be a directed graph.
\begin{enumerate}
\item 
A \emph{strong component} $C$ of $G$ is a non-empty subgraph of $G$ such that for any pair of vertices $v_i$ and $v_j$ of $C$, there exist directed paths from $v_i$ to $v_j$ and from $v_j$ to $v_i$, respectively.
\item 
A strong component $C$ of $G$ is called \emph{terminal} if there is no arrow from any vertex $v$ of $C$ to another vertex in $V_G \setminus V_C$. 
\item 
$G$ is said to be \emph{strongly connected} if $G$ itself is a strong component. 
\end{enumerate}

\end{mydef}

\begin{example}
\begin{enumerate}
\item 
In each of the directed graphs below, the red subgraphs denote their strong terminal components. 
\begin{equation*}
\left(\begin{tikzcd}
\bullet  \arrow[dr,swap]
& \bullet \arrow[d,swap] \\
& \bullet\arrow[r,swap] & \bullet \arrow[l] \arrow[r,swap] & \mycirc[red]
\end{tikzcd} \right) 
~\text{and}~ 
\left( \begin{tikzcd}[row sep=0.2cm, column sep=0.2cm]
& & \bullet \arrow[drr] \arrow[dll]& &  \\ 
\bullet \arrow[urr] \arrow[dr] & & & & \bullet \arrow[dl] \\ 
& \mycirc[red] \arrow[rr,red] & & \mycirc[red] \arrow[ll,red]& 
\end{tikzcd}\right).
\end{equation*}

\item For graphs in \Cref{example: laplacian}, 
the terminal strong components are the subgraphs whose vertex sets are $V(T), \{3\}$, and $\{\{1\},\{2\},\{4\},\{5\}\}$, respectively.
\end{enumerate}
\end{example}

\begin{mytheorem}[{\cite[Corollary 3.5]{wagner2000critical}}] \label{theorem: wagner}
For any directed graph $G$, the rank of $\Pic(G)$ is the number of terminal strong components of $G$. 	
\end{mytheorem}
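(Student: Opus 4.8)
The plan is to prove that the rank of $\Pic(G) = \operatorname{coker}(L_G^T)$ equals the number of terminal strong components by analyzing the rank of the Laplacian matrix $L_G$ via the strong-component structure of $G$. Since $\Pic(G)$ is the cokernel of an $n \times n$ integer matrix, its free rank is $n - \rank(L_G)$, so the statement is equivalent to showing that the corank (nullity) of $L_G$ over $\Q$ equals the number of terminal strong components. By \Cref{theorem: gcd theorem}(2) we may work with $L_G$ rather than $L_G^T$.

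First I would pass to the \emph{condensation} of $G$: contract each strong component to a single vertex to obtain a directed acyclic graph $\widehat{G}$ whose sink vertices are exactly the terminal strong components. The key structural fact I would exploit is \Cref{kernelLaplician}, which says $L_G \cdot \mathbf{1}^T = \mathbf{0}$; more importantly, each terminal strong component $C$ produces a genuine element of the kernel. Specifically, restricting to the rows and columns indexed by $V(C)$, the submatrix is itself the Laplacian of a strongly connected graph (no arrows leave $C$, so the off-block entries in those rows vanish), and by \Cref{kernelLaplician} applied to that block it kills the indicator vector $\mathbf{1}_C^T$ supported on $C$. Because no arrows leave $C$, the columns indexed by $V(C)$ have zero entries in all rows outside $C$, so $\mathbf{1}_C^T$ lies in the kernel of the full matrix $L_G$. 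These indicator vectors for distinct terminal strong components have disjoint supports, hence are linearly independent, giving the lower bound $\dim\ker(L_G) \geq \#\{\text{terminal strong components}\}$.

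For the reverse inequality I would order the vertices compatibly with the topological order on the condensation $\widehat{G}$, grouping vertices by strong component and placing terminal components last. In this ordering $L_G$ becomes block upper-triangular (up to the convention), and the diagonal blocks are the Laplacians of the individual strong components together with extra positive diagonal contributions coming from arrows leaving the component. A non-terminal strong component has at least one arrow exiting it, which adds a strictly positive amount to the out-degree of some vertex that is not compensated within the block; this makes its diagonal block nonsingular (it is a strongly connected Laplacian perturbed by a nonzero nonnegative diagonal, which is a nonsingular $M$-matrix), contributing full rank. Each terminal block contributes exactly corank one. Summing the coranks of the diagonal blocks then gives $\dim\ker(L_G) = \#\{\text{terminal strong components}\}$.

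The main obstacle is the nonsingularity claim for the non-terminal diagonal blocks: one must argue carefully that a strongly connected Laplacian $L_C$ augmented by a nonnegative, nonzero diagonal matrix $D$ (recording arrows that exit $C$) is invertible. I would establish this by an $M$-matrix / irreducibly-diagonally-dominant argument: $L_C + D$ has nonpositive off-diagonal entries, is weakly diagonally dominant with row sums equal to the exiting degrees (hence nonnegative, and strictly positive in at least one row), and is irreducible because $C$ is strongly connected; by the standard criterion for irreducibly diagonally dominant matrices this forces nonsingularity. Care is needed to handle the bookkeeping of bi-directional arrows and to confirm that the block-triangular reduction does not disturb rank, but once the diagonal-block analysis is in place the rank count is immediate and the theorem follows.
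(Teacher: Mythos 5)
First, a point of reference: the paper does not prove this theorem at all --- it is quoted verbatim from \cite[Corollary 3.5]{wagner2000critical} --- so your proposal can only be judged on its own terms. Your reduction to computing the $\Q$-nullity of $L_G$, and your strategy of topologically ordering the condensation and analyzing diagonal blocks, is the right (and standard) route. However, your lower-bound step is genuinely false. ``No arrows leave $C$'' means the \emph{rows} indexed by $V(C)$ vanish outside the \emph{columns} of $C$, not the reverse: the columns of $C$ acquire entries $-1$ in every outside row that has an arrow \emph{into} $C$. Consequently $\mathbf{1}_C^T$ is not in $\ker(L_G)$. Concretely, take $G$ with vertices $\{1,2\}$ and the single arrow $1 \to 2$, so the terminal strong component is $\{2\}$; then
\begin{equation*}
L_G = \begin{pmatrix} 1 & -1 \\ 0 & 0 \end{pmatrix}, \qquad
L_G \begin{pmatrix} 0 \\ 1 \end{pmatrix} = \begin{pmatrix} -1 \\ 0 \end{pmatrix} \neq \mathbf{0},
\end{equation*}
and $\ker(L_G)$ is spanned by $(1,1)^T$. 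In general the kernel vector attached to a terminal component has a nonzero ``tail'' on the non-terminal vertices (in your block notation it is $\bigl(-A^{-1}B\,\mathbf{1}_C,\ \mathbf{1}_C\bigr)$), so the disjoint-support independence argument is lost as well. Passing to $L_G^T$ does not repair this: the column sums of a strongly connected Laplacian block equal out-degree minus in-degree within the component, which need not vanish.

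This error would be harmless if the second half of your argument were complete, since it computes the nullity exactly; but two of its steps are load-bearing and unproven. (i) ``Each terminal block contributes exactly corank one'': \Cref{kernelLaplician} only gives corank $\geq 1$; that the Laplacian of a strongly connected digraph has nullity \emph{exactly} one is real content (it is essentially the one-terminal-component case of the theorem) and needs an argument --- e.g.\ a maximum principle (if $L_C x = 0$, the set of indices where $x$ attains its maximum is closed under taking out-neighbours, hence is everything by strong connectivity), or a citation to irreducible singular $M$-matrix / Perron--Frobenius theory. (ii) ``Summing the coranks of the diagonal blocks'' is not a valid operation on block upper-triangular matrices: $\bigl(\begin{smallmatrix} 0 & 1 \\ 0 & 0 \end{smallmatrix}\bigr)$ has corank $1$, while its diagonal blocks have total corank $2$. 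The step is correct here only after two observations you do not make: the whole non-terminal principal submatrix $A$ is nonsingular (determinants do multiply along a block triangle, so your Taussky argument for each non-terminal block yields this), and the terminal rows vanish outside their own blocks, so $L_G = \bigl(\begin{smallmatrix} A & B \\ 0 & D \end{smallmatrix}\bigr)$ with $A$ invertible and $D$ block diagonal; then column operations clear $B$ and give $\rank L_G = \rank A + \rank D = n - k$. With (i) supplied and (ii) restated this way your proof is correct, and your first paragraph can simply be deleted.
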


Combining the result of D.~Wagner and \Cref{proposition: tree proposition}, we have a characterization of the Picard group of a directed tree. 

\begin{corollary}\label{picDirectedTree}
Let $T$ be a directed tree 
and $r$ the number of terminal strong components of $T$.
Then the Picard group of $T$ is free of rank $r$. 
\end{corollary}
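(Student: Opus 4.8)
The plan is to combine the two results that are explicitly available: \Cref{theorem: wagner} (Wagner's theorem), which computes the rank of $\Pic(G)$ for any directed graph, and \Cref{proposition: tree proposition}, which shows that a directed tree has torsion-free Picard group. Since the statement is a corollary of these two facts, the proof should be short and essentially a matter of assembling them correctly.

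First I would recall the general structure theorem for finitely generated abelian groups: for any directed graph $G$, the Picard group $\Pic(G)$ is finitely generated (as noted in the remark following \Cref{definition: directed picard groups and jacobians}), so it decomposes as $\Pic(G) \cong \Z^r \oplus \Jac(G)$, where $r$ is the rank (the free rank) and $\Jac(G)$ is the torsion subgroup. This is the defining decomposition, with $\Jac(G)$ being precisely the torsion part by \Cref{definition: directed picard groups and jacobians}.

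Next I would apply the two cited results to the case $G = T$, a directed tree. By \Cref{proposition: tree proposition}, the Jacobian $\Jac(T) = 0$, so the torsion part vanishes and the decomposition collapses to $\Pic(T) \cong \Z^r$, i.e. $\Pic(T)$ is free. Then by \Cref{theorem: wagner}, the rank $r$ equals the number of terminal strong components of $T$. Putting these together yields $\Pic(T) \cong \Z^r$ with $r$ the number of terminal strong components, which is exactly the claim.

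There is essentially no obstacle here, since both ingredients are already proved or cited; the only point requiring a line of care is making explicit that ``free of rank $r$'' follows from simultaneously knowing the torsion is trivial and the rank is $r$. The substantive mathematical content lives entirely in \Cref{proposition: tree proposition}, whose inductive argument (peeling off a leaf and invoking \Cref{proposition: gluing an arrow proposition}) does the real work; the corollary itself is just the clean packaging of that fact with Wagner's rank formula.
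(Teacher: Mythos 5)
Your proof is correct and matches the paper's own reasoning exactly: the paper also obtains this corollary by combining \Cref{proposition: tree proposition} (torsion-freeness, hence freeness of the finitely generated group $\Pic(T)$) with Wagner's rank formula in \Cref{theorem: wagner}. Nothing further is needed.
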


In the next two examples, we demonstrate that adding a vertex and an arrow may change the rank of the Picard group.

\begin{myeg}\label{example: oriented tree}
The directed tree $T$
\begin{equation*}
T=\left(\begin{tikzcd}[row sep=0.4cm, column sep=0.4cm]
1
& 2 \arrow[d,swap] &  3 \arrow[d] & \\
& 4\arrow[r,swap] \arrow[ul] & 5 \arrow[l] \arrow[u]\arrow[r,swap] & 6
\end{tikzcd}\right)
\end{equation*}
has $\Pic(T) \cong\mathbb{Z}^2$ and $\Jac(T)=0$.
\end{myeg}

\begin{myeg}
We add vertex $v_7$ and an arrow to the directed tree $T$ in \Cref{example: oriented tree} in two different ways. 
For $T'$, the rank of the Picard group increases by $1$. 
This can be seen from direction calculations or by observing that $T'$ has one more terminal strong component than $T$. 
For $T''$, the rank stays the same by \Cref{proposition: gluing an arrow proposition} or by counting the number of terminal strong compoenents. 
In all cases, their Picard groups are free. 
\begin{equation*}
T'=\left(\begin{tikzcd}[row sep=0.4cm, column sep=0.4cm]
1 	& 2 \arrow[d,swap] &  3 \arrow[d] & \\
7& 4\arrow[r,swap] \arrow[ul] \arrow[l,"\alpha"] & 5 \arrow[l] \arrow[u]\arrow[r,swap] & 6
\end{tikzcd}\right)
\qquad
T''=\left(\begin{tikzcd}[row sep=0.4cm, column sep=0.4cm]
1 	& 2 \arrow[d,swap] &  3 \arrow[d] & \\
7 \arrow[r,swap,"\beta"]& 4\arrow[r,swap] \arrow[ul]  & 5 \arrow[l] \arrow[u]\arrow[r,swap] & 6
\end{tikzcd}\right).
\end{equation*}
\end{myeg}

\section{Picard groups of cycle graphs}\label{section: cycles}

By a \emph{directed cycle graph}, we mean a directed graph whose underlying undirected group is a cycle graph. 
Let $C_n$ denote an arbitrary directed cycle graph with $n$-vertices. 
In this section, we prove a few theorems on Picard groups of directed cycle graphs $C_n$. 
When $C_n^{un}$ is undirected, $\Pic(C_n^{un}) \cong \Z \times \Z_n$ (\cite[Corollary 4]{glass2020chip}). 
We first show that for any $n$ and $k$ such that $1 \le k \le n$, there exists a directed cycle graph $C_n$ whose Jacobian is $\Z_k$ (\Cref{theorem: single term}).
Note that $\Z_1 = \{0 \}$. 
As an application, we identity a subclass for which there is a combinatorial invariant that determines their Jacobians while keeping the rank of the Picard group to be $1$ (\Cref{cycleGlobalSinkPic}). 
Throughout this section, we assume that $n\geq 3$ for $C_n$.

\begin{mytheorem}\label{theorem: single term}
For each $n$ and $k$ such that $1 \leq k \leq n$, there exists a directed cycle graph $C_n$ whose Picard group is isomorphic to $\Z \times \Jac(C_n)$, where $\Jac(C_n) \cong \Z_k$.
\end{mytheorem}

Indeed, we prove Theorem \ref{theorem: single term} by induction as follows:
Let $\sum_n$ be the set of all cycle graphs with $n$-vertices and $C_n^{un}$ the underlying undirected graph of $C_n$. We construct a map
\begin{equation}
\Phi \colon \sum_n \to \sum_{n+1}	
\end{equation} 
such that if $C_n \neq C_n^{un}$, then  
\begin{equation}\label{equation: pic iso}
\Pic(C_n) \cong \Pic(\Phi(C_n)).
\end{equation}
With this map and \Cref{example: example c3} (as a base case), to complete the proof by induction, one needs to find cycle graphs with $n+1$ vertices having $\Z_n$ and $\Z_{n+1}$, respectively. 
The cycle graph $C_{n+1}^{un}$ has $\Pic(C_{n+1}^{un})\cong \Z \times \mathbb{Z}_{n+1}$,
and we construct $C_{n+1}$ having $\Pic(C_{n+1}) \cong \Z \times \mathbb{Z}_n$ in \Cref{lemma: obj4}.
Based on the statements, in Subsection \ref{cyclecSubSection}, we show how to determine the Picard group and Jacobian of a directed cycle graph having a global sink. 

The following example serve as the base case.

\begin{myeg}\label{example: example c3}
With the following orientations of $C_3$
\begin{equation*}
G_1=\left(\begin{tikzcd}
\bullet \arrow[r] & \bullet \arrow[dl] \\
\bullet \arrow[u]& 
\end{tikzcd} \right) \quad G_2=\left(\begin{tikzcd}
\bullet  & \bullet \arrow[l]\arrow[dl] \\
\bullet \arrow[u]& 
\end{tikzcd} \right) \quad G_3=\left(\begin{tikzcd}
\bullet \arrow[r] \arrow[d]& \bullet \arrow[dl] \arrow[l]\\
\bullet \arrow[u] \arrow[ur]& 
\end{tikzcd} \right),
\end{equation*}
we have $\Jac(G_1)=0$, $\Jac(G_2)\cong \mathbb{Z}_2$, and $\Jac(G_3)\cong \mathbb{Z}_3$. 
In all three cases, the rank of the Picard group is $1$. 
This proves the case $n=3$ for \Cref{theorem: single term}. 
\end{myeg}

Recall that we use $e_{\overrightarrow{i j}}$ and $e_{\overleftrightarrow{i j}}$ to denote one-directional and bi-directional arrows from a vertex $v_i$ to a vertex $v_j$, respectively. 
We also use $e_{\overleftarrow{j i}}$ to denote $e_{\overrightarrow{i j}}$ if the order of vertices provides a better presentation.\footnote{We view a bi-directional arrow $e_{\overleftrightarrow{i j}}$ as a single arrow. Some authors view $e_{\overleftrightarrow{i j}}$ as a union of two one-directional arrows. 
In our work, our proofs use  induction on the number of vertices. 
This difference does not cause any issue and we believe that our choice was more suitable for our statements.}

Let $C_n$ be a directed cycle graph with $V(G) = \{ v_1, \dots, v_n \}$. 
If $C_n$ has a sink, then we may relabel the vertices of $C_n$ such that 
$v_n$ is the sink vertex with directed arrows $e_{\overrightarrow{n-1,n}}$ and $e_{\overleftarrow{n,1}}$.
Let $C_{n+1}'$ be the directed cycle graph whose vertex and edge sets are given as follows:
\begin{equation}\label{equation: C'}
V(C_{n+1}') = V(C_n) \sqcup \{ v_{n+1} \} ~
\text{and}~
E(C_{n+1}') =  E(C_n) 
\setminus \{e_{\overleftarrow{n,1}}\} 
\sqcup \{e_{\overrightarrow{n,n+1}}, e_{\overleftarrow{n+1,1}} \}.
\end{equation}
Pictorially, we have the following
\begin{equation*}
C_n= \left( \cdots \begin{tikzcd}[column sep=0.4cm]
v_{n-1} \arrow[r] & v_n & v_1 \arrow[l,red]
\end{tikzcd} \cdots \right)
\implies
C_{n+1}'= \left( \cdots \begin{tikzcd}[column sep=0.4cm]
v_{n-1} \arrow[r] & v_n \arrow[r,blue] & v_{n+1} &  v_1 \arrow[l,blue] 
\end{tikzcd} \cdots \right).
\end{equation*}
In this case, we call $C_{n+1}'$ a \emph{degree zero extension} of $C_n$.  

Suppose now that $C_n$ has a vertex of (out-)degree $1$. 
We may relabel the vertices of $C_n$ such that the vertex $v_n$ is of degree $1$ and $C_n$ has directed arrows $e_{\overrightarrow{n-1,n}}$ and $e_{\overrightarrow{n,1}}$ or that $C_n$ has directed arrows $e_{\overrightarrow{n-1,n}}$ and $e_{\overleftrightarrow{n,1}}$.
In both cases, we write $C_{n+1}''$ for the directed cycle graph whose edge set is given as follows.
\begin{equation}
V(C_{n+1}'') = V(C_n) \sqcup \{ v_{n+1} \} 
~\text{and}~
E(C_{n+1}'') = 
\begin{cases}
E(C_n) \setminus \{ e_{\overrightarrow{n,1}}\} \sqcup \{e_{\overrightarrow{n,n+1}}, e_{\overrightarrow{n+1,1}} \}
~\text{or}~\\
E(C_n) \setminus \{ e_{\overleftrightarrow{n,1}}\} \sqcup \{e_{\overrightarrow{n,n+1}}, e_{\overleftrightarrow{n+1,1}} \}.
\end{cases}
\end{equation}
Pictorially, we have the following
\begin{equation*}
\begin{aligned}
&C_n=\left( \cdots \begin{tikzcd}[column sep=0.4cm]
v_{n-1} \arrow[r] & v_n \arrow[r,red] & v_1 
\end{tikzcd} \cdots \right)
\implies
C_{n+1}''= \left( \cdots \begin{tikzcd}[column sep=0.4cm]
v_{n-1} \arrow[r] & v_n \arrow[r,blue] & v_{n+1} \arrow[r,blue] & v_1 
\end{tikzcd} \cdots \right)
~\text{or}~
\\
&C_n=\left( \cdots \begin{tikzcd}[column sep=0.4cm]
v_{n-1} \arrow[r] & v_n \arrow[r,red] & v_1 \arrow[l,red] 
\end{tikzcd} \cdots \right)
\implies
C_{n+1}''= \left( \cdots \begin{tikzcd}[column sep=0.4cm]
v_{n-1} \arrow[r] & v_n \arrow[r,blue] & v_{n+1} \arrow[r,blue] & v_1 \arrow[l,blue]
\end{tikzcd} \cdots \right).    
\end{aligned}
\end{equation*}
In this case, we call $C_{n+1}''$ a \emph{degree one extension} of $C_n$. 

Since a graph may have multiple vertices having degree $0$ or $1$ simultaneously, a single cycle graph $C_n$ may have both types of extensions, 
and these extensions depend on the choice of a vertex. 
However, we will show in \Cref{lemma: key lemma for cycles} that Picard groups are stable under these extensions, i.e., 
\[
\Pic (C_n) \cong \Pic (C_{n+1}') \textrm{ and } \Pic (C_n) \cong \Pic (C_{n+1}'').
\] 

\begin{myeg} 
Here are examples of degree $0$ and $1$ extensions of $C_5$ graphs. 
\begin{equation*}
\begin{aligned}
C_5=\left( \begin{tikzcd}[row sep=0.2cm, column sep=0.2cm]
& & 5  & &  \\ 
4 \arrow[urr] \arrow[dr] & & & & 1 \arrow[dl]\arrow[ull,red] \\ 
& 3 \arrow[rr] & & 2 \arrow[ll]& 
\end{tikzcd}\right)	
&\implies 
C_6'= \left( \begin{tikzcd}[row sep=0.2cm, column sep=0.3cm]
& 5 \arrow[rr,blue]&  &6 &  \\ 
4  \arrow[dr] \arrow[ur] & & & & 1 \arrow[dl] \arrow[ul,blue] \\ 
& 3 \arrow[rr] & & 2 \arrow[ll]& 
\end{tikzcd}\right)	\\
C_5=\left( \begin{tikzcd}[row sep=0.2cm, column sep=0.2cm]
& & 5 \arrow[drr,red] & &  \\ 
4 \arrow[urr] \arrow[dr] & & & & 1 \arrow[dl]\\ 
& 3 \arrow[rr] & & 2 \arrow[ll]& 
\end{tikzcd}\right)	
&\implies 
C_6''= \left( \begin{tikzcd}[row sep=0.2cm, column sep=0.3cm]
& 5 \arrow[rr,blue]&  &6\arrow[dr,blue] &  \\ 
4  \arrow[dr] \arrow[ur] & & & & 1 \arrow[dl]  \\ 
& 3 \arrow[rr] & & 2 \arrow[ll]& 
\end{tikzcd}\right)	
\end{aligned}
\end{equation*}
\end{myeg}
\vspace{0.3cm}

\begin{lem} \label{lemma: key lemma for cycles}
Let $C_n$ be a directed cycle graph. 
Suppose that not every arrow of $C_n$ is bi-directional. 
Then $C_n$ has a vertex $v$ of degree $0$ or $1$. 
Furthermore, the Picard group does not change under degree extensions at $v$, i.e., $\Pic (C_n) \cong \Pic (C_{n+1}')$ and $\Pic (C_n) \cong \Pic (C_{n+1}'')$. 
\end{lem}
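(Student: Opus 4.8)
The plan is to establish the two assertions separately: first the purely combinatorial existence of a vertex of out-degree $0$ or $1$, and then the algebraic invariance of the Picard group under each of the extensions. For the first assertion I would count outgoing arrows. In the underlying cycle every vertex meets exactly two edges, so each diagonal entry $l_{ii}$ of $L_{C_n}$ lies in $\{0,1,2\}$. If $b$ and $u$ denote the numbers of bidirectional and one-directional arrows, then $b+u=n$ and, since a bidirectional arrow adds $1$ to the out-degree of each of its endpoints while a one-directional arrow adds $1$ only to its tail, the sum of the diagonal entries is $\sum_i l_{ii}=2b+u=2n-u$. The hypothesis that not every arrow is bidirectional means $u\ge 1$, so $\sum_i l_{ii}<2n$; as every $l_{ii}\le 2$, some vertex has $l_{ii}\in\{0,1\}$, i.e. out-degree $0$ or $1$. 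This is the vertex at which the relevant degree extension is performed.

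For the second assertion I would argue directly with Laplacian matrices and reduce via \Cref{remark: reduction}. By \Cref{theorem: gcd theorem} the cokernels of a matrix and its transpose coincide, so it suffices to show $\operatorname{coker}(L_{C_{n+1}'})\cong\operatorname{coker}(L_{C_n})$ and likewise for $C_{n+1}''$. I would label the distinguished vertex $v_n$ together with its cycle-neighbours $v_{n-1}$ and $v_1$ as in the construction, and write out the $(n+1)\times(n+1)$ matrix $L_{C_{n+1}'}$ (resp.\ $L_{C_{n+1}''}$) using \Cref{definition: directed laplacian}. Since only the arrows incident to $v_n$ and to the new vertex $v_{n+1}$ are altered, this matrix agrees with $L_{C_n}$ except in the rows and columns indexed by $v_n$ and $v_{n+1}$, and the few changed entries can be recorded explicitly.

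The heart of the argument is to isolate a unit pivot attached to $v_{n+1}$ and excise it. In each case the rows indexed by $v_n$ and $v_{n+1}$ each carry a single $+1$ on the diagonal together with one $-1$; adding the $v_{n+1}$-row to the neighbouring row holding the matching $-1$, together with a dual column operation, clears the off-diagonal entries and leaves row and column $v_{n+1}$ equal to the standard basis vector. \Cref{remark: reduction} then permits deletion of this row and column. A short verification---using that every row of a Laplacian sums to zero (\Cref{kernelLaplician}) as a bookkeeping check on the $v_n$-row---shows that the surviving $n\times n$ matrix is \emph{exactly} $L_{C_n}$ in the degree-zero and in the one-directional degree-one cases, and differs from $L_{C_n}$ by a single elementary row operation in the bidirectional degree-one case. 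In all cases the cokernel is preserved, yielding $\Pic(C_n)\cong\Pic(C_{n+1}')$ and $\Pic(C_n)\cong\Pic(C_{n+1}'')$.

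I expect the main obstacle to be bookkeeping rather than conceptual depth: one must check that deleting the old arrow at $v_n$ and inserting the two new arrows leaves the out-degree of $v_n$ (hence $l_{nn}$) and every column of $L_{C_n}$ not incident to $v_n$ or $v_1$ intact, so that after excision nothing but the intended $n\times n$ block remains. The only genuinely distinct subtlety is the bidirectional degree-one extension, where the new arrow $e_{\overleftrightarrow{n+1,1}}$ produces an extra $-1$ in the $v_1$-row; clearing it perturbs the diagonal entry $l_{11}$, and I would absorb this discrepancy by a single additional row operation of precisely the type used in \Cref{proposition: gluing an arrow proposition}, exactly as that lemma trades a bidirectional arrow for a one-directional one.
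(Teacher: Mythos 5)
Your existence argument (the out-degree count $\sum_i l_{ii}=2b+u=2n-u<2n$) is correct, and your handling of both degree-one extensions follows essentially the paper's own route: write out the Laplacians, isolate a $\pm 1$ pivot by elementary operations, excise it with \Cref{remark: reduction}, and absorb the bi-directional subcase by one extra row operation in the spirit of \Cref{proposition: gluing an arrow proposition} --- which is exactly how the paper reduces that subcase to the one-directional one. However, the pivotal claim in your second step is false in the degree-zero case, and the operation you build on it would fail there. In the degree-zero extension of \cref{equation: C'}, the new vertex $v_{n+1}$ is the \emph{sink} of $C_{n+1}'$: its row in $L_{C_{n+1}'}$ is identically zero and its diagonal entry is $0$, not $+1$. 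So it is not true that ``the rows indexed by $v_n$ and $v_{n+1}$ each carry a single $+1$ on the diagonal together with one $-1$''; adding the (zero) $v_{n+1}$-row to any other row is a no-op, the column of $v_{n+1}$ --- which contains two $-1$'s, from $v_1$ and $v_n$, over a zero diagonal --- can never be turned into a standard basis vector this way, and \Cref{remark: reduction} cannot excise $v_{n+1}$ at all, because the required $\pm 1$ corner entry is absent.

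The repair stays inside your framework, but you must pivot on $v_n$ rather than $v_{n+1}$. In $L_{C_{n+1}'}$ the row of $v_n$ is $(0,\dots,0,1,-1)$ and the only off-diagonal entry in the \emph{column} of $v_n$ is the $-1$ in row $v_{n-1}$; adding row $v_n$ to row $v_{n-1}$ clears that column, \Cref{remark: reduction} then deletes the row and column of $v_n$, and the surviving $n\times n$ matrix is $L_{C_n}$ --- not literally, but after relabelling $v_{n+1}$ as the sink $v_n$ (so your assertion that the surviving matrix is ``exactly'' $L_{C_n}$ also needs this relabelling). Alternatively, the paper's own sequence of operations (replace $r_1$ by $r_1-r_n$, replace $c_n$ by $c_n+c_{n+1}$, then swap $r_n$ and $r_{n+1}$) moves a $-1$ into the corner position $(n+1,n+1)$ and produces a block-diagonal matrix with $L_{C_n}$ in the upper-left block and $-1$ in the lower-right corner, to which \Cref{remark: reduction} applies directly. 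With this one case repaired, your argument is complete and is essentially the paper's proof.
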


\begin{proof}
Suppose that $C_n$ is a directed cycle graph such that not all arrows of $C_n$ are bi-directional. 
Let $V(C_n)=\{v_1,\dots,v_n\}$ and $D_{C_n} = (d_{ij})$ be the vertex set and the degree matrix of $C_n$, respectively. 
Since not every arrow is bi-directional, there exists $i$ such that $d_{ii} = 0$ or $1$. We may assume $i = n$ so that the adjacent vertices are $v_{n-1}$ and $v_1$.  

Suppose that $d_{nn} = 0$. In this case, the Laplacian matrix of $C_n$ is of the following form. 
\begin{equation*}
L_{C_n} = 
\begin{bmatrix}
l + 1 & -l & 0 & \cdots & 0 & -1 \\
\vdots & \vdots & \vdots & \cdots & \vdots &\vdots \\
0 & \cdots & 0 & -k & k+1 & -1 \\
0 & \cdots & 0 & 0 & 0 & 0 
\end{bmatrix}.
\end{equation*}
where $l, k \in \{ 0, 1 \}$.
Then we have the following Laplacian matrix $L_{C_{n+1}'}$ for $C_{n+1}'$. 
\begin{equation*}
L_{C_{n+1}'} = 
\begin{bmatrix} 
l + 1 & -l & 0 & \cdots & 0 & -1 \\
\vdots & \vdots & \vdots & \cdots & \vdots & \vdots  \\
0 & \cdots & -k & k+1 & -1 &0 \\
0 & \cdots & 0 & 0 & 1 & -1 \\
0 & \cdots & 0 & 0 & 0 & 0 
\end{bmatrix}.
\end{equation*}
Let $r_1,\dots,r_{n+1}$ and $c_1,\dots,c_{n+1}$ denotes the rows and columns of $L_{C_{n+1}'}$, respectively. 
After replacing $r_1$ with $r_1-r_n$, 
replacing $c_n$ with $c_n+c_{n+1}$, 
and switching $r_n$ and $r_{n+1}$ in sequence, 
we obtain the following matrix.
\begin{equation*}
\begin{bmatrix} 
l + 1 & -l & 0 & \cdots & -1 & 0 \\
\vdots & \vdots & \vdots & \cdots & \vdots & \vdots  \\
0 & \cdots & -k & k+1 & -1 &0 \\
0 & \cdots & 0 & 0 & 0 & 0 \\
0 & \cdots & 0 & 0 & 0 & -1 
\end{bmatrix}  = \left[ \begin{array}{c|c}
L_{C_n} & \mathbf{0} \\
\hline
\mathbf{0} & -1
\end{array} \right].
\end{equation*}
Now, the claim follows from Remark \ref{remark: reduction}.

Suppose $d_{nn} = 1$. First, we consider the case where the vertex $v_n$ has an outgoing arrow to $v_1$. 
Thus, we have the following Laplacian matrices for $C_n$ and $C_{n+1}''$, respectively.
\begin{equation}\label{degree1Type1}
L_{C_n} = \begin{bmatrix}
l  & -l & 0 & \cdots & 0 & 0 \\
\vdots & \vdots & \vdots & \cdots & \vdots & \vdots  \\
0 & \cdots & 0 & -k & k+1 & -1 \\
-1 & \cdots & 0 & 0 & 0 & 1 
\end{bmatrix},
\quad
L_{C_{n+1}''}=\begin{bmatrix} 
l  & -l & 0 & \cdots & 0 & 0 & 0 \\
\vdots & \vdots & \vdots & \cdots & \vdots & \vdots &\vdots \\
0 & \cdots & 0 & -k & k+1 & -1 & 0  \\
0 & \cdots & 0 & 0 & 0 & 1 & -1 \\ 
-1 & \cdots & 0 & 0 & 0 & 0 & 1 
\end{bmatrix}    
\end{equation}
where $l, k \in \{ 0, 1 \}$. 
Let $r_1,\dots,r_{n+1}$ and $c_1,\dots,c_{n+1}$ denote the rows and columns of $L_{C_{n+1}''}$, respectively.
By switching $r_{n+1}$ and $r_n$, 
replacing $r_n$ with $r_n+r_{n+1}$, 
and replacing $c_n$ with $c_n+c_{n+1}$ in sequence, 
we have the following matrix.
\begin{equation*}
\begin{bmatrix} 
l  & -l & 0 & \cdots & 0 & 0 & 0 \\
\vdots & \vdots & \vdots & \cdots & \vdots & \vdots &\vdots \\
0 & \cdots & 0 & -k & k+1 & -1 & 0  \\
-1 & \cdots & 0 & 0 & 0 & 1 & 0 \\ 
0 & \cdots & 0 & 0 & 0 & 0 & -1 \\ 
\end{bmatrix} = \left[ \begin{array}{c|c}
L_{C_n} & 0 \\
\hline
0 & -1
\end{array} \right].
\end{equation*}
Again, our claim follows from Remark \ref{remark: reduction}.

Finally, we assume that there exists a bi-directional arrow between the vertices $v_n$ and $v_1$. In this case by adding the last row of the Laplacian matrices  $L_{C_n}$ and $L_{C_n''}$ to the first one, we have the Laplacian matrices as in \cref{degree1Type1}.
This completes the proof. 
\end{proof}

\begin{lem} \label{lemma: obj4}
Let $C_n$ be a directed cycle graph with the following edge set. 
\begin{equation}\label{eq: orientation}
E(C_n)=
\{
e_{\overrightarrow{{n 1}}}, 
e_{\overleftarrow{{1 2}}}, 
e_{\overleftarrow{{2 3}}}\} 
\cup 
\{e_{\overleftrightarrow{i\, j}}
\mid i\neq j \textrm{ and } \{i,j\} \not \subseteq \{n,1,2,3\} \}.
\end{equation}
Pictorially, we have
\begin{equation*}
C_n= \left( \cdots \begin{tikzcd}[column sep=0.4cm]
v_n \arrow[r] & v_1 & v_2 \arrow[l] & v_3 \arrow[l]
\end{tikzcd} \cdots \right)
\end{equation*}
and all other arrows are bi-directional. 
Then we have
\[
\Pic (C_n) \cong \Z \times \Z_{n-1} \textrm{ and } \Jac (C_n) \cong \Z_{n-1}.
\]
\end{lem}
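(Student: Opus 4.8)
The plan is to compute the Smith normal form of the Laplacian directly and read off both the free and torsion parts. First I would write down $L_{C_n}$ from the given orientation. The vertex $v_1$ has out-degree $0$ (both incident arrows $e_{\overrightarrow{n1}}$ and $e_{\overleftarrow{12}}$ point \emph{into} $v_1$), so $v_1$ is a sink and the first row of $L_{C_n}$ is zero; the remaining out-degrees are $1$ at $v_2$ and $2$ at $v_3,\dots,v_n$. Thus $L_{C_n}$ has the block shape $\left[\begin{smallmatrix} 0 & \mathbf{0} \\ \mathbf{c} & \tilde{L}\end{smallmatrix}\right]$, where $\tilde{L}$ is the $(n-1)\times(n-1)$ submatrix supported on $\{v_2,\dots,v_n\}$ and $\mathbf{c}=(-1,0,\dots,0,-1)^T$ is the first column restricted to those rows. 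Concretely $\tilde{L}$ is tridiagonal with diagonal $(1,2,2,\dots,2)$ and $-1$'s on the off-diagonals, its first row being $(1,0,\dots,0)$ because the only outgoing arrow of $v_2$ goes to the deleted sink.

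The key reduction is to establish $\operatorname{coker}(L_{C_n})\cong \Z\oplus\operatorname{coker}(\tilde{L})$. Since the first row of $L_{C_n}$ vanishes, every column lies in $\{0\}\oplus\Z^{n-1}$, so the cokernel splits off a free $\Z$ in the first coordinate, and what remains is $\Z^{n-1}$ modulo the lattice spanned by $\mathbf{c}$ together with the columns of $\tilde{L}$. The point is that the extra generator $\mathbf{c}$ is redundant: by \Cref{kernelLaplician} we have $L_{C_n}\mathbf{1}^T=\mathbf{0}$, and reading off the last $n-1$ coordinates gives $\mathbf{c}=-\tilde{L}\,\mathbf{1}^T$, so $\mathbf{c}\in\operatorname{im}(\tilde{L})$ and the spanned lattice equals $\operatorname{im}(\tilde{L})$. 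Hence $\operatorname{coker}(L_{C_n})\cong \Z\oplus\operatorname{coker}(\tilde{L})$, and by \Cref{theorem: gcd theorem}(2) the same cokernel arises from $L_{C_n}^T$, so it suffices to analyze $\tilde{L}$.

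It then remains to show $\snf(\tilde{L})=\operatorname{diag}(1,\dots,1,n-1)$, which yields $\operatorname{coker}(\tilde{L})\cong\Z_{n-1}$. Because $\tilde{L}$ is square of size $n-1$, \Cref{theorem: gcd theorem}(1) reduces this to two computations: $I_{n-1}(\tilde{L})=\langle\det\tilde{L}\rangle$ and $I_{n-2}(\tilde{L})=\langle 1\rangle$. Expanding $\det\tilde{L}$ along its first row reduces it to the determinant of the $(n-2)\times(n-2)$ tridiagonal matrix with $2$'s on the diagonal and $-1$'s off it, which equals $n-1$ by the standard recursion. For the unimodular minor, deleting the first row and last column of $\tilde{L}$ leaves an upper-triangular matrix with $-1$ on the diagonal, of determinant $\pm 1$, so $I_{n-2}(\tilde{L})=\langle 1\rangle$. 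Thus $d_1=\dots=d_{n-2}=1$ and $d_{n-1}=n-1$, giving $\operatorname{coker}(\tilde{L})\cong\Z_{n-1}$, and therefore $\Pic(C_n)\cong\Z\times\Z_{n-1}$ with $\Jac(C_n)\cong\Z_{n-1}$ (consistent with \Cref{theorem: wagner}, since $v_1$ is the unique terminal strong component). The step I expect to require the most care is the cokernel-splitting: I must verify via \Cref{kernelLaplician} that the sink column $\mathbf{c}$ already lies in the image of the reduced Laplacian, so that peeling off the sink does not alter the torsion; the determinant and triangular-minor computations are routine.
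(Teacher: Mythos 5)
Your proof is correct, and it takes a genuinely different route from the paper's. The paper proceeds by explicit matrix equivalences: after the operations $r_n \mapsto r_n - r_2$, $c_2 \mapsto c_2 + c_1$, and swapping $r_1$ with $r_2$, it invokes the fact that the rows of an undirected cycle Laplacian sum to zero (\Cref{remark: obj2}) to replace the resulting zero row by the ``missing'' first row of $L_{C_{n-1}^{un}}$, so that $L_{C_n}$ is equivalent to $\left[\begin{smallmatrix}-1 & 0\\ 0 & L_{C_{n-1}^{un}}\end{smallmatrix}\right]$; the lemma then follows from \Cref{remark: reduction} together with the known isomorphism $\Pic(C_{n-1}^{un}) \cong \Z \times \Z_{n-1}$ from \cite[Corollary 4]{glass2020chip}. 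You instead peel off the sink $v_1$: the zero row splits a free summand off the cokernel, \Cref{kernelLaplician} shows the sink's column $\mathbf{c} = -\tilde{L}\,\mathbf{1}^T$ is redundant, and the torsion is read off from $\snf(\tilde{L})$ via \Cref{theorem: gcd theorem}, computing $\det\tilde{L} = n-1$ (the recursion of \Cref{lemma: obj3}) and exhibiting a triangular $(n-2)$-minor equal to $\pm 1$. The step you flag as delicate---the cokernel splitting---is sound: every column of $L_{C_n}$ lies in $\{0\}\oplus\Z^{n-1}$, so the quotient splits off a free $\Z$, and the redundancy of $\mathbf{c}$ follows exactly as you argue. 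As for what each approach buys: the paper's reduction is structural, showing that the Picard group of this directed $C_n$ literally \emph{is} that of the undirected cycle one size smaller, which fits the inductive extension philosophy of the section and avoids any minor computations; yours is self-contained (no appeal to \cite{glass2020chip} or \Cref{remark: obj2}) and is essentially the ``reduced Laplacian at a global sink'' mechanism that the paper itself deploys later in \Cref{remark: key} and \Cref{lemma: final lemma}, so it has the merit of unifying this lemma with that later material.
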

\begin{proof}
The Laplacian matrix of $C_n$ is of the following form.
\begin{equation*}
L_{C_n} = \begin{bmatrix}
0 & 0 & 0 & 0 & 0 & 0 & 0 & 0   \\
-1 & 1 & 0 & \cdots & 0 & 0 & 0 & 0  \\
0 & -1 & 2 & -1 & \cdots & 0 & 0 & 0 \\
0 & 0 & -1 & 2 & -1 & \cdots & 0 & 0  \\
\vdots & \vdots & \vdots & \vdots & \vdots & \ddots & \vdots& \vdots  \\
-1 & 0 & 0 & 0 & 0 & \cdots & -1 & 2 \\
\end{bmatrix}.
\end{equation*}
Let $r_1,\dots,r_{n}$ and $c_1,\dots,c_{n}$ denote the rows and columns of $L_{C_n}$, respectively.
Replace $r_n$ with $r_n-r_2$ and then $c_2$ with $c_2+c_1$. 
Further, switch $r_1$ and $r_2$. 
As a result, we obtain the following matrix.
\begin{equation*}
M_n=\begin{bmatrix}
-1 & 0 & 0 & \cdots & 0 & 0 & 0 & 0 \\
0 & 0 & 0 & 0 & 0 & 0 & 0 & 0   \\
0 & -1 & 2 & -1 & \cdots & 0 & 0 & 0 \\
0 & 0 & -1 & 2 & -1 & \cdots & 0 & 0  \\
\vdots & \vdots & \vdots & \vdots & \vdots & \ddots & \vdots& \vdots  \\
0 & -1 & 0 & 0 & 0 & \cdots & -1 & 2 \\
\end{bmatrix}. 
\end{equation*}
Now, it follows from Remark \ref{remark: obj2} below, one can observe that $M_n$ is equivalent to the following matrix:
\begin{equation*}
M_n'=\begin{bmatrix}
-1 & 0 & 0 & \cdots & 0 & 0 & 0 & 0 \\
0 & 2 & -1 & 0 & 0 & 0 & 0 & -1   \\
0 & -1 & 2 & -1 & \cdots & 0 & 0 & 0 \\
0 & 0 & -1 & 2 & -1 & \cdots & 0 & 0  \\
\vdots & \vdots & \vdots & \vdots & \vdots & \ddots & \vdots& \vdots  \\
0 & -1 & 0 & 0 & 0 & \cdots & -1 & 2 \\
\end{bmatrix} = \left[ \begin{array}{c|c}
-1 & 0 \\
\hline
0 & L
\end{array} \right], 
\end{equation*}
where the $(n-1) \times (n-1)$ bottom right submatrix denoted by $L$ is the Laplacian matrix of $C_{n-1}^{un}$, a directed cycle graph corresponding the undirected cycle graph on $(n-1)$-vertices. 
In particular, $\Pic(C_{n-1}^{un})\cong \Z_{n-1} \times \mathbb{Z}$. 
Now, the statement follows from \Cref{remark: reduction}.
\end{proof}

\begin{rmk} \label{remark: obj2}
The Laplacian of the undirected cycle graph $C_n$ is of the form 
\begin{equation*}
L = 
\begin{bmatrix}
2 & -1 & 0 & 0 & 0 & 0 & -1   \\
-1 & 2 & -1 & \cdots & 0 & 0 & 0 \\
0 & -1 & 2 & -1 & \cdots & 0 & 0  \\
\vdots & \vdots & \vdots & \vdots & \vdots & \vdots & \vdots  \\
-1 & 0 & 0 & 0 & \cdots & -1 & 2 \\
\end{bmatrix}.
\end{equation*}
Since $[1 \cdots 1] L = 0$  by \Cref{laplaceFactUndirected}, 
the first row is a $\Z$-linear combination of the next $(n-1)$ rows. 
This justifies the last equivalence in the proof above.  
\end{rmk}

\begin{rmk}
We note that when $n=3$, $C_n$ in \Cref{lemma: obj4} corresponds to $G_2$ in \Cref{example: example c3}.
\end{rmk}

\begin{proof}[Proof of Theorem \ref{theorem: single term}]
We induct on the number of vertices. 
The base case is when $n = 3$ which is Example \ref{example: example c3}. 
Suppose that the statement holds for $n$. 
For $n+1$ and for $1 \leq k \leq  n-1$, 
by the induction hypothesis on $C_n$, there exists a directed cycle graph $C_n$ having $\Pic(C_n) \cong \Z \times \mathbb{Z}_k$. 
Furthermore, not all arrows of $C_n$ are bi-directional; if so, then $\Jac(C_n)\cong\mathbb{Z}_n$. 
Therefore, by \Cref{lemma: key lemma for cycles}, there exists a directed cycle $C_{n+1}$ which is a degree extension of $C_n$ so that $\Pic (C_{n+1}) \cong \Pic(C_n) \cong \Z \times \mathbb{Z}_k$. 

By Lemma \ref{lemma: obj4}, there exists a directed cycle graph $C_{n+1}$ whose Picard group is  $\Z \times \mathbb{Z}_{n}$, 
and $C_{n+1}^{un}$, corresponding to the undirected cycle graph, has $\Pic (C_{n+1}^{un}) \cong \Z \times \mathbb{Z}_{n+1}$. 
This completes the proof. 
\end{proof}

For any given $n \ge 3$, let $\Gamma (n)$ denote the set of all Picard groups of $C_n$. 
Indeed, the proof shows that $\Gamma (n) \subset \Gamma (n+1)$. 
It would be interesting to study the numbers associated to $\Gamma (n), \Gamma (n+1)$, and $\Gamma (n+1) \setminus \Gamma (n)$ as $n$ grows infinity. 

\begin{question}
What are the members of $\Gamma(n)$? Is there a pattern that depends on $n$ only? 
What about the case for their Jacobians?
\end{question}

\subsection{Cycle graphs having a global sink}\label{cyclecSubSection}

In this subsection, we present an application of our results on directed cycle graphs having a global sink. 
We show that these graphs have a rank one Picard group 
and their Jacobian is determined by the number of bi-directional arrows in a certain position (\Cref{cycleGlobalSinkPic}). 
We first prove the statement for a special case (\Cref{theorem: two path}). 

\begin{mydef}\label{definition: path}
Let $C_n$ be a directed cycle graph. 
\begin{enumerate}
\item 
By a \emph{path} of $C_n$, we mean a connected subgraph of $C_n$ in which all arrows are oriented in a single direction. 
We do not allow any bi-directional arrows in a path. 

\item 
By a \emph{$C_n$ with two opposite paths}, we mean a directed cycle graphs $C_n$ which has exactly two paths sharing a sink, i.e., 
$C_n$ has two paths 
$P_1=(v_i \to \cdots \to v_j)$ and 
$P_2=(v_\ell \gets \cdots \gets v_k)$ such that $v_j=v_\ell$, and all the other arrows not in $P_1$ and $P_2$ are bi-directional.
\end{enumerate}
\end{mydef}

\begin{myeg}\label{twoPathExample}
The following is an example of $C_5$ with two opposite paths.
\begin{equation*}
C_5=\left( \begin{tikzcd}[row sep=0.4cm, column sep=0.15cm]
& & 1 \arrow[drr,blue] \arrow[dll]& &  \\
5 \arrow[urr] \arrow[dr,red] & & & & 2 \arrow[dl,blue] \\
& 4& & 3 \arrow[ll,blue]& \end{tikzcd} \right)
\end{equation*}	
We have the following two paths.
\begin{equation*}
P_1=\left( \begin{tikzcd}[row sep=0.3cm, column sep=0.3cm]
5 \arrow[rr] &  &4
\end{tikzcd}\right), \qquad 
P_2=\left( \begin{tikzcd}[row sep=0.4cm, column sep=0.4cm]
4  & 3 \arrow[l] & 2 \arrow[l] & 1 \arrow[l]
\end{tikzcd}\right).
\end{equation*}
\end{myeg}

\begin{remark}
Any $C_n$ with two opposite paths has a global sink. 
Therefore, for these graphs the notions of different chip firing games agree, see \cite[Remark 2.12]{holroyd2008chip}.
\end{remark}

\begin{mythm}\label{theorem: two path}
Let $C_n$ be a directed cycle graph with two opposite paths. Then we have
\begin{equation*}
\Pic (C_n) \cong \Z \times \Z_{k+2} ~\text{and}~ \Jac (C_n) \cong \Z_{k+2}.
\end{equation*}
Here, $k$ denotes the number of bi-directional arrows in $C_n$. 
\end{mythm}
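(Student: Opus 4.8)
The plan is to peel the Laplacian of $C_n$ down to a small tridiagonal matrix whose Smith normal form is transparent. First I would fix notation for the structure: let $s=v_j=v_\ell$ be the common sink, so that $P_1$ and $P_2$ are two arcs of the cycle pointing into $s$, and the remaining $k$ arrows, all bi-directional, necessarily form the complementary arc joining the two far endpoints $A$ and $B$ of $P_1$ and $P_2$. Writing $L=L_{C_n}$, the sink row is identically zero. Since $L\cdot\mathbf{1}^T=\mathbf{0}$ by \Cref{kernelLaplician}, the columns of $L$ sum to zero, so adding every other column to the $s$-column turns it into a zero column. Hence $L$ is equivalent to a matrix with a zero row and a zero column (indexed by $s$) and the reduced Laplacian $\widetilde{L}$ (delete the row and column of $s$) in the remaining block; as $\snf(L)=\snf(L^T)$ by \Cref{theorem: gcd theorem}, this gives
\[
\Pic(C_n)\cong \mathrm{coker}(L)\cong \Z\times \mathrm{coker}(\widetilde L),
\qquad \Jac(C_n)\cong \mathrm{coker}(\widetilde L).
\]
The rank-one conclusion is also forced by \Cref{theorem: wagner}, since the global sink is the unique terminal strong component.

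Next I would simplify $\widetilde L$. Every interior vertex of $P_1$ or $P_2$ has out-degree $1$, its unique outgoing arrow pointing one step closer to $s$, so in $\widetilde L$ its row has a $+1$ on the diagonal and at most one other nonzero entry. Starting from the vertex adjacent to $s$ (whose row, after deletion of the $s$-column, is a standard unit vector) and working outward along each path, I would repeatedly invoke \Cref{remark: reduction} to strip off these unit pivots one at a time. Each elimination removes exactly one off-diagonal $-1$ from the row of the next path vertex, so the cascade terminates leaving precisely the submatrix indexed by the $k+1$ vertices $y_0=A,y_1,\dots,y_{k-1},y_k=B$ of the bi-directional arc. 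This submatrix is the $(k+1)\times(k+1)$ tridiagonal matrix
\[
T=\begin{bmatrix}
2 & -1 & & & \\
-1 & 2 & -1 & & \\
 & \ddots & \ddots & \ddots & \\
 & & -1 & 2 & -1 \\
 & & & -1 & 2
\end{bmatrix},
\]
with $\mathrm{coker}(\widetilde L)\cong \mathrm{coker}(T)$. When $k=0$ the arc degenerates to the single vertex $A=B$ and $T=[\,2\,]$, giving $\Jac\cong\Z_2$, consistent with $k+2=2$.

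Finally I would read off $\snf(T)$ from the minor characterization in \Cref{theorem: gcd theorem}. Deleting the first row and last column of $T$ leaves an upper-triangular matrix with every diagonal entry equal to $-1$, so a $k\times k$ minor equals $\pm1$ and $I_k(T)=\langle 1\rangle$; hence $d_{11}=\dots=d_{kk}=1$ in $\snf(T)$. The standard three-term recursion for tridiagonal determinants gives $\det T=k+2$, so $I_{k+1}(T)=\langle k+2\rangle$ and the last invariant factor is $k+2$. Thus $\snf(T)=\mathrm{diag}(1,\dots,1,k+2)$, so $\mathrm{coker}(T)\cong\Z_{k+2}$, and combining with the first paragraph yields $\Jac(C_n)\cong\Z_{k+2}$ and $\Pic(C_n)\cong\Z\times\Z_{k+2}$. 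I expect the only delicate point to be the bookkeeping in the middle paragraph: one must check that stripping the path vertices leaves the two endpoint rows with diagonal $2$ and a single $-1$ (rather than disturbing their diagonal entries), so that $T$ really is the clean tridiagonal matrix above; the determinant and minor computations at the end are routine.
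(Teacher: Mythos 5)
Your proof is correct, but it takes a genuinely different route from the paper's. The paper argues by induction on $n$: the base case $n=3$ is verified by hand, the extremal case $k=n-2$ is handled by \Cref{lemma: final lemma}, and for $k<n-2$ the graph $C_n$ is exhibited as a degree-zero extension of a $C_{n-1}$ with two opposite paths and the same $k$, so that \Cref{lemma: key lemma for cycles} plus the induction hypothesis finishes the proof. You avoid the induction entirely: you split off the free $\Z$ factor using the zero sink row together with $L\cdot\mathbf{1}^T=\mathbf{0}$ (\Cref{kernelLaplician}), then cascade \Cref{remark: reduction} outward along the two paths, stripping the out-degree-one vertices one unit pivot at a time, which leaves precisely the $(k+1)\times(k+1)$ tridiagonal matrix $M_{k+1}$ of \Cref{lemma: obj3}; its Smith normal form $\operatorname{diag}(1,\dots,1,k+2)$ is exactly the content of \Cref{lemma: obj3} and \Cref{remark: key}. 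So both proofs bottom out at the same tridiagonal computation (which the paper reaches only through \Cref{lemma: final lemma}); your cascading elimination is in effect an iterated version of the degree-zero-extension step inside \Cref{lemma: key lemma for cycles}, run in the contracting direction. What your route buys is self-containedness and transparency: the invariant $k+2$ appears directly as $\det M_{k+1}$, with no case split on $k$ and no induction. What the paper's route buys is economy of lemmas: the extension machinery it reuses is also what drives \Cref{theorem: single term} and \Cref{cycleGlobalSinkPic}. The two delicate points you flag --- that the stripping leaves the endpoint rows with diagonal $2$ and a single $-1$, and the degenerate case $k=0$ where the bi-directional arc collapses to the single vertex $A=B$ --- both check out, since \Cref{remark: reduction} deletes a row and a column without altering any remaining entries.
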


\begin{myeg}\label{example: counting paths}
Consider the following orientations of $C_5$ with two paths. The counter-clockwise path is in red and the clockwise path is in blue.
The bi-directional arrows are shown in black.
\begin{equation*}
\quad G_1=\left( \begin{tikzcd}[row sep=0.4cm, column sep=0.15cm]
& & 1 \arrow[drr,blue] \arrow[dll]& &  \\
5 \arrow[urr] \arrow[dr] & & & & 2 \arrow[dl,blue] \\
& 4\arrow[rr,red]\arrow[ul] & & 3 &
\end{tikzcd} \right),
\quad G_2=\left( \begin{tikzcd}[row sep=0.4cm, column sep=0.15cm]
& & 1 \arrow[drr] \arrow[dll]& &  \\
5 \arrow[urr] \arrow[dr] & & & & 2 \arrow[dl,blue] \arrow[ull] \\
& 4\arrow[rr,red]\arrow[ul] & & 3 &
\end{tikzcd} \right),
\quad G_3=\left( \begin{tikzcd}[row sep=0.4cm, column sep=0.15cm]
& & 1 \arrow[drr,blue] \arrow[dll,red]& &  \\
5  \arrow[dr,red] & & & & 2 \arrow[dl,blue] \\
& 4 & & 3 \arrow[ll,blue]&
\end{tikzcd} \right)
\end{equation*}
Then we have $\Jac(C_5)\cong\mathbb{Z}_3$, $\Jac(G_1)\cong\mathbb{Z}_4$, $\Jac(G_2)\cong\mathbb{Z}_5$, and $\Jac(G_3)=\mathbb{Z}_2$, where $C_5$ is the graph in \Cref{twoPathExample}.
All these graphs have rank $1$ Picard groups.  
\end{myeg}

\Cref{lemma: obj3} will be useful for the proof of \Cref{theorem: two path}.

\begin{lemma} \label{lemma: obj3}
For $n \ge 2$, let $M_n$ denote the $n \times n$ matrix whose diagonal entries are $2$ and sub-diagonals are $-1$, i.e., 
\begin{equation}\label{eq: m_n}
M_n = \begin{bmatrix}
2 & -1 & 0 & 0& \cdots & \cdots & 0 \\
-1 & 2 & -1& 0& \cdots &\cdots & 0 \\
0 & -1 & 2 & -1 & 0& \cdots & 0 \\
\vdots & \vdots & \vdots & \vdots & \vdots& \vdots & \vdots \\
0 & 0 & \cdots& 0 & -1 & 2 & -1 \\
0 & 0 &0 & \cdots & 0 &  -1 & 2 
\end{bmatrix}.
\end{equation}
Then one has $\det (M_n) = n+1$. 
\end{lemma}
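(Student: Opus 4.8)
The plan is to prove $\det(M_n) = n+1$ by induction on $n$, using cofactor (Laplace) expansion along the first row or column to produce a recurrence relation. First I would verify the base cases directly: for $n=2$, the matrix is $\begin{bmatrix} 2 & -1 \\ -1 & 2 \end{bmatrix}$ with determinant $4-1 = 3 = 2+1$, and for $n=1$ the natural convention $\det(M_1)=2$ also fits the pattern, which will be convenient for anchoring the recurrence. These tridiagonal matrices are a standard object, so the determinant should satisfy a simple two-term recurrence.

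The key step is to expand $\det(M_n)$ along the first row. The $(1,1)$ entry is $2$ and its minor is exactly $M_{n-1}$. The $(1,2)$ entry is $-1$, and deleting row $1$ and column $2$ leaves a matrix that is lower-triangular in its first column (the only nonzero entry in that column being the $-1$ in the top-left position), so expanding that minor along its first column reduces it to $\det(M_{n-2})$ up to sign bookkeeping. Carrying out the signs carefully, this yields the recurrence
\begin{equation*}
\det(M_n) = 2\det(M_{n-1}) - \det(M_{n-2}).
\end{equation*}
I would then check that the proposed closed form $\det(M_n) = n+1$ satisfies this recurrence: indeed $2(n) - (n-1) = n+1$, matching the claimed value $\det(M_n) = (n-1)+1 \cdot \text{(shifted indices)}$; more precisely if $\det(M_{n-1}) = n$ and $\det(M_{n-2}) = n-1$, then $2n - (n-1) = n+1$, as required.

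To finish, I would combine the recurrence with the base cases via strong induction: assuming $\det(M_{n-1}) = n$ and $\det(M_{n-2}) = n-1$, the recurrence immediately gives $\det(M_n) = 2n - (n-1) = n+1$, closing the induction. I do not anticipate a serious obstacle here, since tridiagonal determinant recurrences are routine; the one place requiring care is the \emph{sign and index bookkeeping} in the cofactor expansion of the second term, ensuring that the doubly-reduced minor contributes $-\det(M_{n-2})$ rather than $+\det(M_{n-2})$. An alternative, equally clean route would be to avoid the explicit recurrence entirely and instead perform row reduction, clearing the subdiagonal from top to bottom so that the matrix becomes upper triangular with diagonal entries $\tfrac{2}{1}, \tfrac{3}{2}, \dots, \tfrac{n+1}{n}$, whose product telescopes to $n+1$; this gives the determinant directly without induction and may be the most transparent presentation.
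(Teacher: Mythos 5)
Your proposal is correct and follows essentially the same route as the paper: induction with Laplace expansion along the first row, yielding the recurrence $\det(M_n) = 2\det(M_{n-1}) - \det(M_{n-2})$ and the closed form $n+1$. If anything, your explicit attention to the base case $\det(M_1)=2$ is slightly more careful than the paper, whose strong induction tacitly needs this value when $n=3$.
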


\begin{proof}
We use induction on the size of $M_n$. When $n=2$, we have
\begin{equation*}
M_2 = \begin{bmatrix}
2 & -1 \\
-1 & 2
\end{bmatrix}
\end{equation*}
whose determinant is $3$. 

Suppose the statement is true for all $k \le n-1$. 
To compute $\det (M_n)$, we use the Laplace expansion along the first row.
Thus, one has 
\begin{equation*}
\det (M_n) = 2 \det (M_{n-1}) + \det(N),
\end{equation*}
where $N$ is the following $(n-1)\times(n-1)$ matrix.
\begin{equation}
N=\left[ \begin{array}{c|c c c c c  }
-1 &  -1& 0 & \cdots & 0 \\
\hline 
\mathbf{0} &  &  M_{n-2}
\end{array} \right].
\end{equation}
In particular, $\det(N)=-\det(M_{n-2})$ (cf. by \Cref{remark: reduction}), and hence by the induction hypothesis, we have 
\begin{equation*}
\det(M_n)=2\det(M_{n-1})+\det(N) = 2((n-1)+1) + (-1)((n-2)+1)=n+1. \qedhere
\end{equation*}
\end{proof}

\begin{rmk}\label{remark: key}
Note that the $(n-1) \times (n-1)$ minor of $M_n$ 
after deleting the first row and last column is $(-1)^{n-1}$. 
Hence the Smith normal form of $M_n$ is 
\begin{equation*}
\left[ \begin{array}{c|c}
I_{n-1} & 0 \\
\hline 
0 & n+1 
\end{array} \right].
\end{equation*}
\end{rmk}

\begin{lemma} \label{lemma: final lemma}
Let $C_n$ be a directed cycle graph and $V(C_n)=\{v_1,\dots,v_n\}$. Suppose that the vertex $v_1$ does not have any outgoing arrows, and all other vertices have two outgoing arrows. 
Then we have $\Pic(C_n) \cong \Z \times \Jac(C_n)$, where $\Jac (C_n) \cong \mathbb{Z}_n$.
\end{lemma}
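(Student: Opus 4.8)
The plan is to first pin down the Laplacian matrix exactly, then exploit the kernel relation $L_{C_n}\mathbf{1}^T=\mathbf{0}$ to reduce to the already-understood tridiagonal matrix $M_{n-1}$. Since the cokernel only depends on the matrix up to unimodular row and column operations, I may relabel the vertices so that the underlying cycle reads $v_1 - v_2 - \cdots - v_n - v_1$. First I would argue that the hypotheses force a unique orientation: because $v_1$ has out-degree $0$, its two incident edges must be $e_{\overrightarrow{2\,1}}$ and $e_{\overrightarrow{n\,1}}$ (neither can be bi-directional). For $2\le i\le n$ each $v_i$ has out-degree $2$ but only two incident edges, so both edges at $v_i$ are outgoing; hence each interior edge $\{v_i,v_{i+1}\}$ with $2\le i\le n-1$ is outgoing from both endpoints and must be bi-directional. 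This yields
\begin{equation*}
L_{C_n} = \begin{bmatrix}
0 & 0 & 0 & \cdots & 0 & 0 \\
-1 & 2 & -1 & \cdots & 0 & 0 \\
0 & -1 & 2 & \cdots & 0 & 0 \\
\vdots & & & \ddots & & \vdots \\
0 & 0 & \cdots & -1 & 2 & -1 \\
-1 & 0 & \cdots & 0 & -1 & 2
\end{bmatrix}.
\end{equation*}

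The key step is a single column operation. By \Cref{kernelLaplician} the rows of $L_{C_n}$ sum to zero, which says precisely that the columns sum to the zero vector; adding columns $2,\dots,n$ to the first column therefore annihilates the first column. Since the first row is already zero and the remaining $(n-1)\times(n-1)$ block in rows and columns $2,\dots,n$ is untouched and equals $M_{n-1}$ (the matrix of \Cref{lemma: obj3}), I obtain
\begin{equation*}
L_{C_n} \sim \left[ \begin{array}{c|c} 0 & \mathbf{0} \\ \hline \mathbf{0} & M_{n-1} \end{array} \right].
\end{equation*}
By \Cref{remark: key} one has $\snf(M_{n-1}) = \mathrm{diag}(1,\dots,1,n)$, so $\mathrm{coker}(M_{n-1})\cong\Z_n$; the isolated zero block contributes a free $\Z$. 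Using $\snf(L_{C_n})=\snf(L_{C_n}^T)$ from \Cref{theorem: gcd theorem}, this gives $\Pic(C_n)=\mathrm{coker}(L_{C_n}^T)\cong \Z\oplus\mathrm{coker}(M_{n-1})\cong \Z\times\Z_n$, whence $\Jac(C_n)\cong\Z_n$.

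I expect the main obstacle to be the bookkeeping that justifies the forced orientation and confirms that the bottom-right block is exactly $M_{n-1}$ (so that \Cref{remark: key} applies verbatim); everything after the column operation is immediate. As a fallback if the block reduction feels delicate, I would instead compute invariant factors via the minor characterization of \Cref{theorem: gcd theorem}: the zero first row forces every nonzero $(n-1)\times(n-1)$ minor to delete row $1$, and the relation $\sum_j c_j=\mathbf{0}$ makes all such minors equal up to sign to the minor deleting row $1$ and column $1$, which is $\det M_{n-1}=n$ by \Cref{lemma: obj3}; deleting rows $v_1,v_2$ and columns $v_1,v_n$ leaves a triangular matrix of determinant $\pm 1$, so $I_{n-2}(L_{C_n})=\langle 1\rangle$ and $I_{n-1}(L_{C_n})=\langle n\rangle$, again giving $\snf(L_{C_n})=\mathrm{diag}(1,\dots,1,n,0)$. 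I would present the column-operation argument as the primary proof since it is the shortest and reuses the paper's existing lemmas most directly.
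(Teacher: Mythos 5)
Your proposal is correct and follows essentially the same route as the paper's proof: both write down the same Laplacian, use the kernel relation $L_{C_n}\mathbf{1}^T=\mathbf{0}$ (\Cref{kernelLaplician}) to clear the first column and reduce to the block matrix with $M_{n-1}$, and then invoke \Cref{remark: key} to read off $\snf(L_{C_n})=\operatorname{diag}(1,\dots,1,n,0)$. Your explicit justification of the forced orientation and the fallback minor computation are fine additions, but the core argument is identical to the paper's.
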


\begin{proof}
The Laplacian matrix of $C_n$ is the matrix following.
\begin{equation}
L_{C_n}=\begin{bmatrix}
0 & 0 & \cdots  &\cdots  &0 &0 \\
-1 & 2 & -1 & \cdots & \cdots & 0 \\
0 & -1 & 2 & -1 & \cdots & 0\\
\vdots &  \vdots & \vdots & \vdots & \vdots &\vdots \\
0 &  \cdots& 0 & -1 & 2 & -1 \\
-1 &  0 & \cdots & 0 &  -1 & 2 
\end{bmatrix}. 
\end{equation}
By Remark \ref{kernelLaplician}, $L_{C_n}$ is equivalent to the following matrix
\begin{equation}
\left[ \begin{array}{c|c}
0 & 0 \\
\hline 
0 & M_{n-1}
\end{array} \right],
\end{equation}
where $M_{n-1}$ is the matrix in Lemma \ref{lemma: obj3}.
Thus, by Remark \ref{remark: key}, we have
\begin{equation*}
\Pic(C_n) \cong \Z \times \mathbb{Z}_n ~\text{and}~ \Jac (C_n) \cong \mathbb{Z}_n. \qedhere
\end{equation*}
\end{proof}

\begin{proof}[Proof of Theorem \ref{theorem: two path}]
The graph $C_n$ has a global sink. 
Recall $k$ denotes the number of bi-directional arrows. 
We induct on $n$. 
When $n=3$, $G_2$ in \Cref{example: example c3} verifies the case of $k = 0$. 
For $C_3$ with two opposite paths having one bi-directional arrow, its Laplacian matrix and its Smith normal form are the following.
\begin{equation*}
\begin{bmatrix}
0 & 0 & 0 \\
-1 & 2 & -1 \\
-1 & -1 & 2 
\end{bmatrix}
~\text{and}~
\begin{bmatrix}
1 & 0 & 0 \\
0 & 3 & 0 \\
0 & 0 & 0 
\end{bmatrix}.
\end{equation*}
This proves the base case. 

Suppose the statement holds true for two opposite paths directed cycle graphs on $n-1$ vertices. For the directed cycle graphs $C_n$ on $n$ vertices, if $k = n-2$, then we are done by \Cref{lemma: final lemma}.
If $k < n-2$, our $C_n$ can be obtained as a degree $0$ extension of a $C_{n-1}$ with two opposite paths. 
Both $C_n$ and $C_{n-1}$ are with two opposite paths having the same number of bi-directional arrows. 
Thus, by \Cref{lemma: key lemma for cycles} and induction,
we have 
$\Pic(C_{n}) \cong \Pic(C_{n-1}) \cong \Z \times \Z_{k+2}$.  
This completes the proof. 
\end{proof}

\Cref{strighteningLemma} is easy to prove but key to the proof of \Cref{cycleGlobalSinkPic}. 
It is in the same vein as \Cref{proposition: gluing an arrow proposition}. 
\begin{lemma}\label{strighteningLemma}
Let $G$ be a directed graph having and $v,w$ vertices of $G$. 
Suppose $e_{\overleftrightarrow{vw}}$ is in $E(G)$, and the (out-)degree of $v$ is $1$. 
Let $G'$ be a directed graph such that 
\begin{equation*}
V(G') = V(G) ~\text{and}~ E(G') = E(G) \setminus \{ e_{\overleftrightarrow{vw}} \} \cup \{ e_{\overrightarrow{vw}} \}.
\end{equation*}
\end{lemma}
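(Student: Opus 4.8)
The plan is to exhibit $L_G$ and $L_{G'}$ as row-equivalent matrices, differing by a single elementary row operation; since row-equivalent matrices share a Smith normal form and hence have isomorphic cokernels, and since $\snf(L_G)=\snf(L_G^T)$ by \Cref{theorem: gcd theorem}, this will give $\Pic(G)\cong\Pic(G')$.

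First I would index the vertices so that $v$ is labelled $i$ and $w$ is labelled $j$. The hypothesis that the out-degree of $v$ is $1$ is the essential structural input. In $G$ the unique outgoing arrow of $v$ is the bi-directional arrow $e_{\overleftrightarrow{vw}}$, so in $L_G$ the $i$th row $r_i$ has diagonal entry equal to $1$, the entry in column $j$ equal to $-1$, and zeros in all remaining positions. Passing to $G'$ replaces $e_{\overleftrightarrow{vw}}$ by $e_{\overrightarrow{vw}}$, which changes nothing about the arrows leaving $v$; hence the $i$th row of $L_{G'}$ is the identical vector $r_i$.

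Next I would pin down exactly where the two Laplacians differ. The replacement only deletes the arrow from $w$ to $v$, so it affects the outgoing data of $w$ alone: the out-degree of $w$ drops by one, forcing the diagonal entry in row $j$ to decrease by $1$, and the entry of row $j$ in column $i$ to change from $-1$ to $0$. Every other entry of $L_G$ and $L_{G'}$ coincides. Writing $r_j$ and $r_j'$ for the $j$th rows of $L_G$ and $L_{G'}$, the difference $r_j'-r_j$ has $+1$ in column $i$, $-1$ in column $j$, and is zero elsewhere, which is precisely $r_i$. Therefore $r_j'=r_j+r_i$, so $L_{G'}$ arises from $L_G$ by adding row $i$ to row $j$, an invertible row operation.

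The conclusion is then immediate: $L_G$ and $L_{G'}$ are equivalent, so they have the same Smith normal form, as do their transposes, whence $\Pic(G)=\operatorname{coker}(L_G^T)\cong\operatorname{coker}(L_{G'}^T)=\Pic(G')$. I anticipate no genuine obstacle here — the result is a short computation in the same vein as \Cref{proposition: gluing an arrow proposition}. The only place demanding attention is the bookkeeping for how the bi-directional arrow contributes simultaneously to the out-degrees and off-diagonal entries of both $v$ and $w$; the clean point is that the out-degree-$1$ row $r_i$ is exactly the vector needed to correct the single row that changes.
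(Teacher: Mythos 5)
Your proposal is correct and is essentially the paper's own proof: both arguments observe that the out-degree-$1$ hypothesis forces the row of $v$ to be the same vector ($+1$ in column $v$, $-1$ in column $w$, zeros elsewhere) in $L_G$ and $L_{G'}$, that the two Laplacians differ only in the row of $w$, and that the difference of those rows is exactly the row of $v$, so $L_{G'}$ arises from $L_G$ by a single elementary row operation. One negligible bookkeeping caveat: since the paper allows multiple edges, the $(w,v)$-entry need not change from $-1$ to $0$ (it drops from $-(k+1)$ to $-k$ if there are $k$ additional one-directional arrows from $w$ to $v$), but the difference $r_j'-r_j$ is still $r_i$, so your argument is unaffected.
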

Then $L_G$ and $L_{G'}$ are row equivalent. Thus, they have the same Picard group and Jacobian.
\begin{proof}
Without loss of generality, assume $v, w$ be the first two vertices. 
Then their Laplacians only differ in the second row. 
The row equivalence now follows by subtracting the first row of $L_G$ to the second. 
\end{proof}

\begin{myeg} 
Consider the following directed graph. 
\[
G=\left(\begin{tikzcd}[row sep=0.3cm, column sep=0.3cm]
&1 \arrow[dr] & &	& & 2 \arrow[drr] \arrow[dll,swap,"\alpha",red] & &  \\ 
&& 3 \arrow[r] &	\textcolor{blue}{4} \arrow[urr,red] & & & & 5 \arrow[dl]\\ 
& 6 \arrow[ru] & &	& 7 \arrow[rr]  \arrow[ul]& & 8 \arrow[ur] \arrow[ll] & \\
& && & & && 9 \arrow[ul]
\end{tikzcd} \right)
\]	
Then, we have $\Pic(G) \cong \mathbb{Z} \times \mathbb{Z}_2$. Change $\alpha$ to an one-directional arrow to obtain the following graph:
\[
G'=\left(\begin{tikzcd}[row sep=0.3cm, column sep=0.3cm]
&1 \arrow[dr] & &	& & 2 \arrow[drr]  & &  \\ 
&& 3 \arrow[r] &	\textcolor{blue}{4} \arrow[urr,red] & & & & 5 \arrow[dl]\\ 
& 6 \arrow[ru] & &	& 7 \arrow[rr] \arrow[ul]& & 8 \arrow[ur] \arrow[ll] & \\
& && & & && 9 \arrow[ul]
\end{tikzcd} \right)
\]	
Then, we have $\Pic(G') \cong \mathbb{Z} \times \mathbb{Z}_2$.
\end{myeg}


\begin{mydef}
By a \emph{double chain graph}, we mean a directed graph $G$ whose underlying graph is a linear (or path) graph such that for the vertex set $\{ v_0, \dots, v_{k+1} \}$ of $G$, 
the edge set is defined as follows.
\begin{equation*}
E(G) = \{ e_{\overleftarrow{0,1}},
e_{\overleftrightarrow{1,2}}, \dots, 
e_{\overleftrightarrow{k-1,k}},
e_{\overrightarrow{k,k+1}} \}.
\end{equation*}
Pictorially, $G$ is the graph
\begin{equation*}
G=\left( \begin{tikzcd}[column sep=0.4cm]
v_{0}  
& v_1 \arrow[r] \arrow[l,red]
& v_2 \arrow[r] \arrow[l] 
& \cdots \arrow[r] \arrow[l] 
& v_{k-1} \arrow[r] \arrow[l] 
& v_k \arrow[r,blue] \arrow[l] 
& v_{k+1} 
\end{tikzcd} \right).
\end{equation*}
In this case, we say $G$ is a double chain of \emph{length} $k$.
\end{mydef}

\begin{lemma}\label{uniqueDoubleChain}
Let $C_n$ be a directed cycle graph. If $C_n$ has a global sink, then there exists at most one double chain subgraph contained in $C_n$. 
\end{lemma}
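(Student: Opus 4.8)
The plan is to argue by contradiction: assume $C_n$ has a global sink $s$ yet contains two distinct double chain subgraphs $D_1$ and $D_2$, and from this produce a set of vertices from which $s$ is unreachable. First I would fix a cyclic orientation of the underlying cycle and classify every arrow of $C_n$ as \emph{clockwise}, \emph{counterclockwise}, or bi-directional. In this language a double chain is precisely a (possibly empty) maximal run of consecutive bi-directional arrows whose two flanking arrows are one-directional and point \emph{out} of the run — with the convention that indices increase clockwise, the left flank is counterclockwise and the right flank is clockwise. A length-one double chain is the degenerate case where the run is empty and the two outward flanks meet at a single vertex of out-degree $2$. Since $s$ is a sink, both arrows at $s$ are one-directional and point into $s$ (\Cref{defSink}); in particular $s$ lies in no bi-directional core, so $s$ lies in neither $D_1$ nor $D_2$.

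The geometric heart of the argument is that the two bi-directional cores of $D_1$ and $D_2$ are disjoint and cut the cycle into two arcs $A$ and $A'$. Reading $A$ from its $D_1$-end to its $D_2$-end, its first arrow is the right (clockwise) flank of $D_1$, which points into $A$, and its last arrow is the left (counterclockwise) flank of $D_2$, which also points into $A$. Hence, scanning the one-directional arrows of $A$ in order, they begin clockwise and end counterclockwise, so there is a consecutive clockwise-then-counterclockwise pair $e_a, e_c$ with only bi-directional arrows between them. I would then check that the block of vertices enclosed by $e_a$ and $e_c$ has \emph{no outgoing arrow leaving the block}: its two boundary arrows point inward and all interior arrows are bi-directional, so every arrow incident to the block either stays inside it or enters it. The identical reasoning applied to $A'$ produces a second such closed block.

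To finish, I would invoke the global sink hypothesis directly: a nonempty set of vertices with no arrow leaving it cannot reach any vertex outside it, so if it omits $s$ then its vertices fail to reach $s$, contradicting that $s$ is a global sink. Since $A$ and $A'$ are disjoint and $s$ belongs to at most one of them, at least one of the two closed blocks omits $s$, giving the contradiction; therefore at most one double chain exists. (The same bookkeeping in fact shows there is exactly one double chain — the unique ``divergence point'' antipodal to the sink — but only the upper bound is needed here.)

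The step I expect to be the main obstacle is the middle one: extracting a genuinely closed vertex block inside each arc from the outward-pointing flank conditions, and handling the degenerate cases cleanly — when an arc consists of just its two flanking arrows (the block collapses to a single interior vertex that is a sink $\neq s$), and when one of the double chains has length one. An essentially equivalent but more computational route is a transition count: one shows that the global sink condition forces the cyclic sequence of one-directional arrows to contain exactly one clockwise-to-counterclockwise switch (occurring at $s$), whence by cyclic parity there is exactly one counterclockwise-to-clockwise switch, and the latter are in bijection with double chains. This avoids the geometric case analysis but requires first setting up the reachability constraint relating how far the clockwise- and counterclockwise-traversable regions emanating from $s$ extend around the cycle.
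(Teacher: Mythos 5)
Your proposal is correct, and it rests on the same underlying obstruction as the paper's proof---vertices trapped behind the outward-pointing one-directional flanks cannot reach the global sink---but the route is genuinely different. The paper's proof breaks symmetry at once: after relabeling, the sink $v$ is placed in the arc between $v_{k+1}$ and $w_0$, and one then checks that the single vertex $w_{l+1}$ cannot reach $v$, because a simple directed path on a cycle must proceed monotonically in one rotational direction, the counterclockwise exit is blocked by the flank $e_{\overrightarrow{w_l\,w_{l+1}}}$, and the clockwise route would have to traverse the arrow between $v_0$ and $v_1$ against its orientation. Your argument is symmetric in the two chains: you never locate the sink; instead you manufacture, inside each of the two arcs cut out by the cores, a nonempty vertex set with no outgoing arrows, and you finish by pigeonhole since the sink lies in at most one arc. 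What your version buys is the avoidance of the WLOG bookkeeping and of the implicit monotone-path fact; what it costs is the switch-hunting step. In fact that step---and the degenerate cases you single out as the main obstacle---can be deleted entirely: the set of \emph{all} vertices strictly between the two cores on one side is already closed, because the only two arrows joining it to the rest of $C_n$ are the two flanks, which point into it, and every other arrow incident to this set has both endpoints inside it regardless of its direction. Two small facts you assert in passing should be made explicit: distinct double chains have disjoint cores (the core determines its chain, and two distinct maximal bi-directional runs, or out-degree-two vertices, cannot overlap), and the two cores cannot be adjacent (a single arrow between them would have to point out of both cores simultaneously), which is exactly what makes both arcs nonempty. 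With those lines added, your streamlined argument is complete and, if anything, a bit more robust than the paper's.
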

\begin{proof}
Suppose $C_n$ has more than one double chain subgraphs. 
Choose distinct double chain subgraphs and label them as $G$ and $G'$. 
Let $V(G) = \{ v_0, \dots, v_{k+1} \}$ and $V(G') = \{ w_0, \dots, w_{l+1} \}$. 
Further, let $v$ denote the global sink vertex. 
Without loss of generality, we may assume that $v_{k+1} \le v \le w_0$ in the vertex order of $C_n$. 
Then there does not exist a directed path from $w_{l+1}$ to the global sink vertex $v$. 
If there is a directed path $P$ from $w_{l+1}$ to $v$, then $w_l \not\in P$. 
This implies that $G$ is a subgraph of $P$. 
In particular, $v_0$ and $v_1$ are in $P$. 
Since the there is no arrow from $v_0$ to $v_1$, 
$v_1$ cannot be in $P$. 
This is a contradiction. 
\end{proof}

By \Cref{uniqueDoubleChain}, a directed cycle graph $C_n$ having a global sink, we can say \emph{the} length of the double chain of $C_n$. 
This length determines the Picard group and Jacobian of such $C_n$. 

\begin{theorem}\label{cycleGlobalSinkPic}
Let $C_n$ be a directed cycle graph with a global sink.
Suppose $k$ is the length of the \emph{double chain} subgraph contained in $C_n$. 
Then one has 
\begin{equation*}
\Pic(C_n) \cong \Z \times \Jac(C_n) \textrm{ and } \Jac(C_n)\cong \mathbb{Z}_{k+2}.
\end{equation*}
\end{theorem}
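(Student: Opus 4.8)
The plan is to reduce the statement to the case of a directed cycle with two opposite paths (\Cref{theorem: two path}) by means of the straightening operation of \Cref{strighteningLemma}, which preserves both $\Pic$ and $\Jac$. Because $C_n$ has a global sink $s$, the singleton $\{s\}$ is the unique terminal strong component of $C_n$ (as $s$ has out-degree $0$ and every vertex reaches it), so \Cref{theorem: wagner} gives $\rank\Pic(C_n)=1$ and hence $\Pic(C_n)\cong\Z\times\Jac(C_n)$; it therefore suffices to compute $\Jac(C_n)$.

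First I would record the local structure of a global-sink cycle. Both arrows at $s$ are one-directional and point into $s$, and I would examine the maximal runs of consecutive bidirectional arrows. Each such run is bordered, on each side, by a one-directional arrow. If one of these bordering arrows points into the run, then the endpoint of the run it points at has out-degree $1$, its unique outgoing arrow being the adjacent bidirectional one; so \Cref{strighteningLemma} converts that bidirectional arrow into a one-directional arrow pointing along the run, the next vertex then again has out-degree $1$, and iterating straightens the whole run into a directed path without altering $\Pic$ or $\Jac$. A run both of whose bordering arrows point away from it forms, together with those two arrows, a double chain subgraph, of which by \Cref{uniqueDoubleChain} there is at most one.

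After straightening every run that admits an inward-pointing border, the only bidirectional arrows remaining are those of the double chain, and the resulting cycle $C_n'$ satisfies $\Pic(C_n')\cong\Pic(C_n)$ and $\Jac(C_n')\cong\Jac(C_n)$. All of its one-directional arrows now point toward $s$ along the cycle, so removing the double chain leaves exactly two directed arcs that meet at $s$; that is, $C_n'$ is a cycle with two opposite paths whose bidirectional arrows are precisely those of the double chain. \Cref{theorem: two path} then yields $\Jac(C_n)\cong\Jac(C_n')\cong\mathbb{Z}_{k+2}$, where $k$ is the length of the double chain, and combining with the first paragraph gives $\Pic(C_n)\cong\Z\times\mathbb{Z}_{k+2}$.

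The part I expect to be hardest is the structural bookkeeping underlying the middle two steps: showing that in a global-sink cycle every bidirectional arrow outside the double chain really does lie in a run possessing an inward border (so the out-degree-$1$ hypothesis of \Cref{strighteningLemma} is genuinely available at each stage), and verifying that each straightening step preserves the global sink and does not create a second double chain. The remaining delicate point is matching the number of surviving bidirectional arrows to the double-chain length $k$ so that the torsion order is exactly $\mathbb{Z}_{k+2}$; here the degenerate configuration in which the bidirectional arrows reach $s$ from both sides, leaving no proper double chain and instead being governed directly by \Cref{lemma: final lemma}, must be accounted for separately.
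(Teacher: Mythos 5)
Your proposal is correct and follows essentially the same route as the paper: straighten every bi-directional arrow outside the double chain via \Cref{strighteningLemma} (the paper's proof does exactly this, applied inductively), reduce to a cycle with two opposite paths, and invoke \Cref{theorem: two path}. Your extra appeal to \Cref{theorem: wagner} for the rank is redundant since \Cref{theorem: two path} already gives the full group, and the bookkeeping issues you flag at the end---in particular matching the double-chain length against the count of bi-directional arrows feeding into \Cref{theorem: two path}---are glossed over in the paper's proof in just the same way.
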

\begin{proof}
Let $C_n$ be a directed cycle graph with a global sink. 
We claim that there exists another directed cycle graph $C_n'$ such that $C_n'$ is with two opposite paths having the same double chain subgraph and the Laplacian of $C_n'$ is (row)-equivalent to the Laplacian of $C_n$. 
Once we have shown the claim, the proof will follow from \Cref{theorem: two path}.
By \Cref{strighteningLemma}, we know that the following replacement of an arrow keeps the Laplacian matrix up to row equivalence. 
\begin{equation*}
\left( \begin{tikzcd}[column sep=0.4cm] 
v_{i-1}  \arrow[r]
& v_i \arrow[r] 
& v_{i+1}   \arrow[l] 
\end{tikzcd} \right)
\rightarrow
\left( \begin{tikzcd}[column sep=0.4cm]
v_{i-1}  \arrow[r]
& v_i \arrow[r] 
& v_{i+1}   
\end{tikzcd} \right).
\end{equation*}
By inductively applying this procedure to all bi-directional arrows that are not part of the double chain, we will arrive to the desired $C_n'$. 
This completes the proof.
\end{proof}

\section{Picard groups of directed wheel graphs} \label{section: wheel graphs}
In this section, we use the following notation for directed wheel graphs and determine their Picard groups.

\begin{mydef}[{Wheel graph}]
By the directed wheel graph $W_n$, we mean a directed graph obtained by connecting a single universal vertex to all vertices of a cycle graph $C_{n-1}$ whose arrows are bi-directional arrows. 
This is the graph corresponding to the undirected wheel graph. 
\begin{enumerate}
\item 
By $W_n'$, we mean a directed wheel graph such that the arrows of the rim are bi-directional and all its spoke arrows point to the axle.
\item 
By $W_n''$, we mean a directed wheel graph such that the arrows of the rim are bi-directional and all its spoke arrows point away from the axle.
\end{enumerate}
\end{mydef}

Note that $W_n'$ and $W_n''$ have the same underlying undirected graph $W_n$. 

\begin{myeg}\label{example: wheel graph orientations}
Here are examples demonstrating our definitions. 
\begin{equation*}
W_7=\begin{tikzcd}[row sep=0.5cm, column sep=0.5cm]
\bullet \arrow[dr] \arrow[d]\arrow[rr]& &\bullet \arrow[d] \arrow[ll] \arrow[dl]\\
\bullet \arrow[r]\arrow[u]\arrow[d]& \bullet \arrow[l] \arrow[ru] \arrow[lu] \arrow[dl] \arrow[dr] \arrow[r]& \bullet\arrow[d] \arrow[l] \arrow[u]\\
\bullet \arrow[rr] \arrow[u] \arrow[ru] & &\bullet \arrow[u] \arrow[ul] \arrow[ll]
\end{tikzcd} \qquad W_4'=\begin{tikzcd}[row sep=0.5cm, column sep=0.5cm]
& \bullet  \arrow[ldd] \arrow[rdd] \arrow[d] & \\
& \bullet & \\
\bullet \arrow[ru]\arrow[rr] \arrow[ruu] & & \bullet \arrow[lu] \arrow[luu] \arrow[ll]
\end{tikzcd} \qquad W_4''=\begin{tikzcd}[row sep=0.5cm, column sep=0.5cm]
& \bullet  \arrow[ldd] \arrow[rdd]  & \\
& \bullet \arrow[u] \arrow[dl] \arrow[dr] & \\
\bullet \arrow[rr] \arrow[ruu] & & \bullet  \arrow[luu] \arrow[ll] .
\end{tikzcd}
\end{equation*}
\end{myeg}

\begin{proposition}\label{proposition: wheel1}
Let $L_{W_n}$ (resp.\ $L_{W_n'}$) be the Laplacian matrix of $W_n$ (resp.\ $W_n'$). Then $L_{W_n}$ and $L_{W_n'}$ are row equivalent. In particular, one has $\Pic (W_n) \cong \Pic (W_n')$.
\end{proposition}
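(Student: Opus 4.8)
The plan is to write down both Laplacian matrices explicitly, placing the axle as vertex $v_1$ and the rim vertices $v_2, \dots, v_n$ in cyclic order, and then to observe that $L_{W_n}$ and $L_{W_n'}$ differ in exactly one row. First I would record that in both graphs every rim vertex has out-degree $3$ — one arrow to the axle and two along the bi-directional rim — with the same off-diagonal $-1$ pattern, so rows $2$ through $n$ of the two Laplacians coincide. The only discrepancy is the axle row: in $W_n$ the spokes are bi-directional, so row $1$ is $[\,n-1, -1, \dots, -1\,]$, whereas in $W_n'$ all spokes point into the axle, making $v_1$ a sink with row identically $\mathbf{0}$.

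The key step is then a single elementary row operation. Since the underlying graph of $W_n$ is connected and undirected, \Cref{laplaceFactUndirected}(1) gives $\mathbf{1} \, L_{W_n} = \mathbf{0}$, i.e.\ every column of $L_{W_n}$ sums to zero. Hence replacing the first row of $L_{W_n}$ by the sum of all $n$ rows turns the axle row into $\mathbf{0}$ while leaving rows $2, \dots, n$ untouched, and the resulting matrix is precisely $L_{W_n'}$. As this operation is a product of elementary (unimodular) row operations, $L_{W_n}$ and $L_{W_n'}$ are row equivalent, which establishes the first assertion.

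For the consequence on Picard groups, I would note that a row equivalence $L_{W_n} = P \, L_{W_n'}$ with $P$ invertible over $\mathbb{Z}$ carries $\operatorname{im}(L_{W_n'})$ isomorphically onto $\operatorname{im}(L_{W_n})$, so the cokernels of $L_{W_n}$ and $L_{W_n'}$ are isomorphic; equivalently, row operations preserve the Smith normal form. Combined with \Cref{theorem: gcd theorem}(2), which justifies computing $\Pic$ from $L_G$ rather than $L_G^T$, this yields $\Pic(W_n) \cong \Pic(W_n')$.

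I do not anticipate a genuine obstacle here: the argument hinges on the elementary but crucial observation that the two orientations differ only in the axle row, and that the column-sum-zero identity for the undirected wheel cancels that row exactly. The one point requiring care is the bookkeeping — verifying that the rim rows are genuinely identical (same diagonal out-degree and same neighbor entries), so that no further discrepancy is hidden elsewhere in the matrices.
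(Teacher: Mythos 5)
Your proposal is correct and is essentially identical to the paper's proof: both observe that the two Laplacians agree outside the axle row and then use $\mathbf{1}\cdot L_{W_n}=\mathbf{0}$ (from \Cref{laplaceFactUndirected}) to turn the axle row of $L_{W_n}$ into the zero row of $L_{W_n'}$ by adding all other rows to it, a unimodular row operation. The conclusion about Picard groups follows exactly as in the paper, since row equivalence preserves the cokernel.
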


\begin{proof}
We use the labeling convention that $v_1$ is the axle and the vertices on the rim are $v_2, \dots, v_{n}$. The Laplacian matrices are as follows:
\begin{equation*}
L_{W_n} =\scalemath{0.85}{
\left[
\begin{array}{c|cccccccc}
n-1 & -1 & -1 & \cdots & \cdots & \cdots & \cdots & -1 \\
\hline
-1 & 3 & -1 & 0 & \cdots & \cdots & 0 & -1 \\
-1 & - 1& 3 & -1 & \cdots & \cdots & 0 & 0 \\
-1 & 0 & - 1& 3 & -1 & \cdots & 0 & 0 \\
-1 & 0 & 0 & - 1& 3 & -1 & \cdots & 0 \\
-1 & \vdots & \vdots &\vdots & \vdots&\vdots &\vdots & \vdots\\
-1 & 0 & 0 & 0 & \cdots & -1& 3 & -1 \\
-1 & -1 & 0 & 0 & \cdots& 0 & -1 & 3 
\end{array}
\right]}
\quad 
L_{W_n'} = \scalemath{0.85}{
\left[
\begin{array}{c|ccccccc}
0 & 0 & 0 & \cdots & \cdots & \cdots & \cdots & 0 \\
\hline
-1 & 3 & -1 & 0 & \cdots & \cdots & 0 & -1 \\
-1 & - 1& 3 & -1 & \cdots & \cdots & 0 & 0 \\
-1 & 0 & - 1& 3 & -1 & \cdots & 0 & 0 \\
-1 & 0 & 0 & - 1& 3 & -1 & \cdots & 0 \\
-1 & \vdots & \vdots &\vdots & \vdots&\vdots &\vdots & \vdots\\
-1 & 0 & 0 & 0 & \cdots & -1& 3 & -1 \\
-1 & -1 & 0 & 0 & \cdots& 0 & -1 & 3 
\end{array}
\right]}.
\end{equation*}
Since $\mathbf{1_n} \cdot L_{w_n} = 0$ (\Cref{laplaceFactUndirected}), 
$L_{W_n'}$ is obtained from $L_{W_n}$ by adding all other rows from the first row.
This completes the proof. 
\end{proof}

\begin{proposition}\label{proposition: wheel2}
The Smith normal form of the Laplacian of $W_n''$ is of the following form.
\begin{equation*}
\left[
\begin{array}{c|ccc}	
I_{n-3} & 0 & 0 & 0 \\
\hline
0 & a & 0 & 0 \\
0 & 0 & b & 0 \\
0 & 0 & 0 & 0 
\end{array}
\right], \quad ~\textrm{where}~ (a,b) = \begin{cases}
(n-1,n-1) & \text{if}~ n ~\text{is even}; \\
(\frac{n-1}2, 2(n-1)) & \text{if}~ n ~\text{is odd}.
\end{cases}
\end{equation*}
\end{proposition}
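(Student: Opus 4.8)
The plan is to compute the Smith normal form through the minor ideals $I_k := I_k(L_{W_n''})$, using \Cref{theorem: gcd theorem}. First I would record the Laplacian: with $v_1$ the axle and $v_2,\dots,v_n$ the rim, every spoke points away from the axle, so no rim vertex has an arrow back to $v_1$ and
\[
L_{W_n''} = \left[\begin{array}{c|c} n-1 & -\mathbf{1}_{n-1} \\ \hline \mathbf{0} & B \end{array}\right],
\]
where $B$ is the Laplacian of the undirected cycle $C_{n-1}$ and $\mathbf{0}$ is the zero column. Since $\snf(L_{W_n''})=\snf(L_{W_n''}^T)$ (\Cref{theorem: gcd theorem}), this computes $\Pic(W_n'')=\operatorname{coker}(L_{W_n''}^T)$. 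The claimed shape $\snf=\mathrm{diag}(1,\dots,1,a,b,0)$ is equivalent to $I_{n-3}=1$, $I_{n-2}=a$, $I_{n-1}=ab$, and $I_n=0$, so I would establish these four facts. Two come for free from $B$: expanding along the first column gives $\det L_{W_n''}=(n-1)\det B=0$, so $I_n=0$; and since $\Pic(C_{n-1}^{un})\cong\Z\times\mathbb{Z}_{n-1}$ we have $\snf(B)=\mathrm{diag}(1,\dots,1,n-1,0)$, whence $I_{n-3}(B)=1$ and $I_{n-2}(B)=n-1$. As the minors of $B$ sit among the minors of $L_{W_n''}$ and a gcd over a larger family divides that over a subfamily, I immediately get $I_{n-3}=1$ and $I_{n-2}\mid n-1$.

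Next I would show $I_{n-1}=(n-1)^2$ using the adjugate. The minor of $L_{W_n''}$ deleting the $v_2$-row and $v_2$-column equals $(n-1)\det M_{n-2}=(n-1)^2$ by \Cref{lemma: obj3}, so together with $\det L_{W_n''}=0$ the rank is exactly $n-1$. The right kernel is $\langle\mathbf{1}\rangle$ (\Cref{kernelLaplician}) and a one-line computation gives the left kernel $\langle(0,1,\dots,1)\rangle$. From $L_{W_n''}\operatorname{adj}(L_{W_n''})=0=\operatorname{adj}(L_{W_n''})L_{W_n''}$, every column of $\operatorname{adj}(L_{W_n''})$ is a multiple of $\mathbf{1}$ and every row a multiple of $(0,1,\dots,1)$, forcing $\operatorname{adj}(L_{W_n''})=\kappa\,\mathbf{1}^{T}(0,1,\dots,1)$ for a single integer $\kappa$; evaluating the $(2,2)$ cofactor gives $\kappa=(n-1)^2$. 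Hence every $(n-1)$-minor is $0$ or $\pm(n-1)^2$, so $I_{n-1}=(n-1)^2$, and therefore $ab=(n-1)^2$ with $a=I_{n-2}\mid n-1$.

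The crux is the exact value of $a=I_{n-2}$, where I expect the genuine work and the even/odd split. One clean way to organize it: reduce $B$ to $\snf(B)$ by unimodular $P,Q$, apply $\mathrm{diag}(1,P)\,L_{W_n''}\,\mathrm{diag}(1,Q)$, and clear the first $n-3$ top-row entries against the resulting $1$'s via \Cref{remark: reduction}; because the kernel column satisfies $Qe_{n-1}=\pm\mathbf{1}$, what remains is
\[
\left[\begin{array}{ccc} n-1 & s & \pm(n-1) \\ 0 & n-1 & 0 \\ 0 & 0 & 0 \end{array}\right],
\]
whose minor ideals are $I_2=(n-1)^2$ and $I_1=\gcd(n-1,s)$. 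Thus $a=\gcd(n-1,s)$ and the entire proposition reduces to determining $s$ modulo $n-1$. Equivalently, and more in keeping with the intended argument, I would compute the governing $(n-2)$-minors of $L_{W_n''}$ directly by Cramer's rule: deleting the axle column and two rim rows cuts the rim cycle into two arcs on $\alpha$ and $\beta$ vertices with $\alpha+\beta=n-3$, and the minors are assembled from the two path determinants $x=\det M_{\alpha}=\alpha+1$ and $y=\det M_{\beta}=\beta+1$ (\Cref{lemma: obj3}), which obey the quadratic constraint $x+y=n-1$. The parity of $n-1$ then dictates whether $x,y$ share a parity, and tracking the resulting $2$-adic valuation yields $a=n-1$ for even $n$ and $a=(n-1)/2$ for odd $n$.

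Finally I would assemble the answer: with $I_{n-3}=1$, $I_{n-2}=a$, $I_{n-1}=ab=(n-1)^2$, and $I_n=0$, the invariant factors are $1,\dots,1,a,b,0$ with $(a,b)=(n-1,n-1)$ for even $n$ and $(a,b)=((n-1)/2,\,2(n-1))$ for odd $n$; in both cases $a\mid b$, matching the stated form. The hard part is precisely the last $2$-adic step in the odd case — proving simultaneously that every $(n-2)$-minor is divisible by $(n-1)/2$ and that some minor attains exactly that value — which is the quadratic/parity computation indicated above; everything else is bookkeeping around the block structure and the known Smith normal form of the cycle Laplacian.
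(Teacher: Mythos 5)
Most of your scaffolding is correct, and two pieces are genuinely different from (and arguably cleaner than) the paper's argument: you obtain $I_{n-3}=\langle 1\rangle$ and $n-1\in I_{n-2}$ by viewing minors of the rim Laplacian $B$ (whose Smith normal form is known from $\Pic(C_{n-1}^{un})$) as minors of $L_{W_n''}$, and you obtain $I_{n-1}=\langle (n-1)^2\rangle$ by the rank-one adjugate argument, pinning down the constant $\kappa$ with the $(2,2)$ cofactor $(n-1)\det M_{n-2}=(n-1)^2$ from \Cref{lemma: obj3}; your kernel computations are correct. The paper instead performs explicit unimodular operations reducing $L_{W_n''}$ to a bordered matrix $N$ with top row $\mathbf{1}_{n-2}$, corner entry $n-1$, and lower-left block $M_{n-2}$, and reads off $I_{n-1}(L_{W_n''})=\langle\det N\rangle=\langle(n-1)^2\rangle$.

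The genuine gap is the step you yourself flag as the crux: the exact value of $I_{n-2}$, which carries the whole even/odd dichotomy, is never computed. Route (a) correctly reduces everything to a single residue $s \bmod (n-1)$ but stops without determining it; route (b) names the right minors but never derives their values. What is missing is exactly the content of the paper's \Cref{lemma: M_nCremer}: by Cramer's rule the decisive minors are the entries of the solution of $M_{n-2}\mathbf{x}=(n-1)\mathbf{1}_{n-2}^T$, and solving this system explicitly (the ansatz $x_k=ak^2+bk$ with boundary conditions forcing roots at $0$ and $n-1$) gives $x_k=\tfrac{n-1}{2}\,k\,(n-1-k)$ --- the quadratic your constraint ``$x+y=n-1$'' alludes to --- after which $\gcd\bigl(n-1,x_1,\dots,x_{n-2}\bigr)$ is evaluated by a recurrence-and-parity argument on $x_1=\tfrac{(n-1)(n-2)}{2}$. ``Tracking the resulting $2$-adic valuation'' asserts this outcome; it does not prove it. There is also a secondary hole in route (b): a square $(n-2)$-minor deleting the axle column and two rim rows must delete one further rim column, and only when that column coincides with one of the deleted rows does the minor have your two-arc form; the remaining minors of this class are generally nonzero (e.g.\ for $n=5$ one such minor equals $2$), and the divisibility half of your claim (``every $(n-2)$-minor is divisible by $a$'') must account for them. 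The paper sidesteps this case analysis because its reduction to $N$ (equivalently, your route (a)) classifies all minors at once; as written, route (b) alone cannot certify the lower bound on divisibility, so to finish you should either carry route (a) through to the computation of $s$, or prove the explicit Cramer formula and then classify all minors as the paper does.
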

\begin{proof}
We use the labeling convention that $v_1$ is the axle and the vertices on the rim are $v_2, \dots, v_{n}$. 
Then we have the following Laplacian matrix $L_{W_n''}$ of $W_n''$. 
\begin{equation*}
L_{W_n''} =
\left[
\begin{array}{c|cccccccc}
n-1 & -1 & -1 & \cdots & \cdots & \cdots & \cdots & -1 \\
\hline
0 & 2 & -1 & 0 & \cdots & \cdots & 0 & -1 \\
0 & - 1& 2 & -1 & \cdots & \cdots & 0 & 0 \\
0 & 0 & - 1& 2 & -1 & \cdots & 0 & 0 \\
0 & 0 & 0 & - 1& 2 & -1 & \cdots & 0 \\
0 & \vdots & \vdots &\vdots & \vdots&\vdots &\vdots & \vdots\\
0 & 0 & 0 & 0 & \cdots & -1& 2 & -1 \\
0 & -1 & 0 & 0 & \cdots& 0 & -1 & 2 
\end{array}
\right].
\end{equation*}
Let $r_1,\dots,r_n$ and $c_1,\dots,c_n$ denote the rows and columns of $L_{W_n''}$.
First, $I_{n-3}(L_{W_n''})=\angles{1}$. 
This follows by taking the $(n-3) \times (n-3)$ minor after deleting $r_1,r_2,r_n$ and $c_1,c_{n-1},c_n$.
By \Cref{kernelLaplician}, we have $\det(L_{W_n''})=0$. 
Hence, by Theorem \ref{theorem: gcd theorem}, it suffices to show that 
\begin{equation*}
I_{n-1}(L_{W_n''}) = \langle (n-1)^2 \rangle~\text{and}~ I_{n-2}(L_{W_n''}) = \begin{cases}
\langle n-1 \rangle & \text{if}~ n ~\text{is even} \\
\langle (n-1)/2\rangle & \text{if}~ n ~\text{is odd}.
\end{cases}
\end{equation*}
Now, we reduce the Laplacian matrix $L_{W_n''}$.  
By replacing $c_1$ with $c_1+\cdots+c_n$, multiplying $-1$ to $r_1$, and then replacing $r_n$ with $r_2+\cdots +r_n$, we obtain the following matrix. 
\begin{equation}\label{wppIntMat}
\left[	\begin{array}{c|ccccccc}
0 & 1 & 1 & \cdots & \cdots & \cdots & \cdots & 1 \\
\hline
0  & 2 & -1 & 0 & \cdots & \cdots & 0 & -1 \\
0  & - 1& 2 & -1 & \cdots & \cdots & 0 & 0 \\
0  & 0 & - 1& 2 & -1 & \cdots & 0 & 0 \\
0 & 0 & 0 & - 1& 2 & -1 & \cdots & 0 \\
0  & \vdots & \vdots &\vdots & \vdots& \vdots& \vdots& \vdots\\
0  & 0 & 0 & 0 & \cdots & -1& 2 & -1 \\
0  & 0 & 0 & 0 & \cdots& 0 & 0 & 0
\end{array} \right].
\end{equation}
By abuse of notation, we use $c_1,\dots, c_n$ to denote the columns of the matrix in \cref{wppIntMat}.
After replacing $c_n$ with $c_2+\cdots+c_n$, we obtain the following matrix.
\begin{equation}
\left[\begin{array}{c|ccccccc}
0  & 1 & 1 & \cdots & \cdots & \cdots & 1 & n-1 \\
\hline
0  & 2 & -1 & 0 & \cdots & \cdots & 0 & 0 \\
0  & - 1& 2 & -1 & \cdots & \cdots & 0 & 0 \\
0  & 0 & - 1& 2 & -1 & \cdots & 0 & 0 \\
0  & 0 & 0 & - 1& 2 & -1 & \cdots & 0 \\
0  & \vdots & \vdots &\vdots & \vdots& \vdots& \vdots& \vdots\\
0  & 0 & 0 & 0 & \cdots & -1& 2 & 0 \\
0 & 0 & 0 & 0 & \cdots& 0 & 0 & 0
\end{array} \right].
\end{equation}
Since the first column and the last row are zero, 
to compute the Smith normal form of $L_{W_n''}$, it suffices to find the Smith normal form of the $(n-1)$ by $(n-1)$ matrix $N$.
\begin{equation*}
N := \left[
\begin{array}{c|c}
\mathbf{1}_{n-2} & n-1  \\
\hline \\[-0.4cm]
M_{n-2} & \mathbf{0}_{n-1}^T
\end{array}
\right],
\end{equation*}
where $M_{n-2}$ is the matrix \cref{eq: m_n} in Lemma \ref{lemma: obj3}. 
Note that $\det(M_{n-2}) = n-1$ and this shows that
\begin{equation*}
I_{n-1}(L_{W_n''}) = I_{n-1}(N) =\angles{\det(N)}=\angles{(n-1)^2}.
\end{equation*} 
To compute the $(n-2)$ minors, first note that if the last column is a part of a minor, then it is divisible by $(n-1)$. 
Since $\det(M_{n-2}) = (n-1) \in I_{n-2}(N)$, we only need to consider the $(n-2)$ minors of the first $(n-2)$ columns of $N$. In other words, $I_{n-2}(L_{W_n''})$ is generated by $(n-1)$ 
and the $(n-2)$-determinants of the matrix which is obtained from $M_{n-2}$ by replacing the $i^{\textrm{th}}$ row by $\mathbf{1}_{n-2}$.

Note that $M_{n-2}$ is symmetric. 
By Cramer's rule, these $(n-2)$ minors are the entries of the solution matrix $\mathbf{x}$ of the following matrix equation (up to sign):
\begin{equation}\label{eq: minors}
M_{n-2} \mathbf{x} 
= \det(M_{n-2}) \cdot \mathbf{1}_{n-2}^T.
\end{equation}
By \Cref{lemma: M_nCremer}, 
the greatest common divisor of the solutions to \cref{eq: minors}
is $(n-1)$ if $n$ is even and $(n-1)/2$ if $n$ is odd. 
This completes the proof.
\end{proof}

\begin{lemma}\label{lemma: M_nCremer}
Consider the matrix equation 
\begin{equation*}
M_n \mathbf{x} 
= (n+1) \mathbf{1_n}^T,
\end{equation*}
where $M_n$ is as in Lemma \ref{lemma: obj3}. 
Let $x_k$ denote the $k^{\textrm{th}}$ entry of $\mathbf{x}$. Then $x_k = a k^2 + b k$, where
\begin{equation*}
a = -\frac{(n+1)}{2}, \quad b = \frac{(n+1)^2}{2}.
\end{equation*}
Furthermore, $\gcd(x_1,\dots, x_n)$ is $(n+1)$ if $n$ is even and $(n+1)/2$ if $n$ is odd. 
\end{lemma}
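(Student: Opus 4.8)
The plan is to verify the given closed form for $\mathbf{x}$ by direct substitution (this is legitimate because $M_n$ is invertible, so the solution is unique) and then to read off the gcd by factoring out the parity-dependent common divisor. First I would write the system $M_n\mathbf{x}=(n+1)\mathbf{1}_n^T$ componentwise. Adopting the boundary convention $x_0 = x_{n+1} = 0$, every equation takes the uniform three-term form
\begin{equation*}
-x_{k-1}+2x_k-x_{k+1} = n+1, \qquad 1 \le k \le n,
\end{equation*}
since the first and last rows of $M_n$ collapse to this shape precisely because of the two zero boundary values. A quadratic $x_k = ak^2+bk$ has constant second difference $-2a$, so substituting forces $-2a = n+1$, i.e. $a=-(n+1)/2$; the left boundary $x_0=0$ is automatic, while the right boundary $x_{n+1}=0$ gives $a(n+1)+b=0$, i.e. $b=(n+1)^2/2$. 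Since $\det(M_n)=n+1\neq 0$ by \Cref{lemma: obj3}, this unique solution is exactly the claimed one.

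For the gcd I would pass to the factored form $x_k = \tfrac{n+1}{2}\,k(n+1-k)$ and split on the parity of $n$. When $n$ is even, $n+1$ is odd, so $k$ and $n+1-k$ have opposite parities and exactly one is even; hence $x_k=(n+1)\cdot\tfrac{k(n+1-k)}{2}$ with integer cofactor, which shows $(n+1)\mid\gcd(x_1,\dots,x_n)$. Conversely the cofactors at $k=1,2$ are $n/2$ and $n-1$, and the relation $2\cdot\tfrac{n}{2}-(n-1)=1$ makes them coprime, so $\gcd(x_1,x_2)=n+1$ already pins the answer down to $n+1$. When $n$ is odd, $\tfrac{n+1}{2}$ is itself an integer dividing every $x_k$, and the cofactors at $k=1,2$ are $n$ and $2(n-1)$; since $n$ is odd (hence coprime to $2$) and $\gcd(n,n-1)=1$, these are coprime, giving $\gcd(x_1,\dots,x_n)=\tfrac{n+1}{2}$.

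The two substitutions in the first step are routine, so the real content — and the origin of the even/odd dichotomy flagged before the lemma — lies in the second step: tracking which of the factors $k$ and $n+1-k$ absorbs the extra $2$ in $\tfrac{n+1}{2}k(n+1-k)$. That is exactly what the parity split resolves, and the observation that coprimality can be certified at just the coordinates $k=1,2$ is what keeps the gcd argument short; I would therefore treat the small-$k$ coprimality checks, rather than any general divisibility claim over all $k$, as the step warranting the most care.
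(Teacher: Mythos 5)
Your proof is correct. For the closed form, you and the paper argue identically: both observe that a quadratic $ak^2+bk$ has constant second difference $-2a$, impose the boundary conditions $x_0=x_{n+1}=0$ to force $a=-(n+1)/2$ and $b=(n+1)^2/2$, and rely on $\det(M_n)=n+1\neq 0$ for uniqueness. Where you genuinely diverge is the gcd computation. The paper works from the defining recursion: it proves the intermediate identity $\gcd(f_1,\dots,f_n)=\gcd(f_1,n+1)$ by noting that the full gcd divides $2f_1-f_2=n+1$ and, conversely, that $\gcd(f_1,n+1)$ divides every $f_k$ by induction via $f_k=2f_{k-1}-f_{k-2}-(n+1)$; it then evaluates $\gcd\bigl(n(n+1)/2,\,n+1\bigr)$ by parity. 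You instead exploit the factorization $x_k=\tfrac{n+1}{2}\,k(n+1-k)$: the parity-dependent common factor ($n+1$ when $n$ is even, $(n+1)/2$ when $n$ is odd) visibly divides every term, and coprimality of the cofactors is certified at just the coordinates $k=1,2$. Both routes are elementary and both settle integrality of the $x_k$ along the way (the paper through the recursion with $f_0=0$, you through the parity of $k(n+1-k)$). Your argument is somewhat more direct and makes the even/odd dichotomy transparent --- it is literally a question of which factor absorbs the $2$ --- while the paper's recursion argument buys the clean reduction of the whole gcd to $\gcd(x_1,n+1)$ without ever factoring the quadratic.
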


\begin{proof}
Note that $\det (M_n) = n+1$ by Lemma \ref{lemma: obj3}. Consider a sequence $f(k)=f_k  = ak^2 + bk$. 
For any $k$, we have
\begin{equation*}
-f_{k-1} + 2f_k - f_{k+1} = -2a.
\end{equation*}
First, we set $a = -(n+1)/2$. 
In addition, the equations $2x_1 - x_2 = n+1$ and $-x_{n-1} + 2x_n = n+1$ are equivalent to the conditions $f_0 = f_{n+1} = 0$. 
These conditions imply $b = (n+1)^2/2$ since $0, -b/a$ are the roots of $f(k)$. 

We compute the gcd of $f_1,\dots, f_n$. 
First, we claim that $\gcd(f_1,\dots, f_n) = \gcd(f_1, n+1)$. 
Since $\gcd(f_1,\dots, f_n)$ divides $2f_1-f_2 = n+1$, 
it suffices to show that $\gcd(f_1, n+1)$ divides $f_2,\dots, f_n$. 
This follows by induction since for $k = 2, \dots, n$, we have
\[
f_k = 2 f_{k-1} - f_{k-2} - (n+1).
\]
Here we use the fact that $f_0 = 0$ for the base case of the induction. 
Moreover, since
\[
f_1 =  -(n+1)/2 + (n+1)^2/2 = \frac{n(n+1)}2
\]
is an integer, the expressions $f_k = -2 f_{k-1} + f_{k-2}$ also prove that $f_1, \dots, f_n$ are integers. 

Finally, we show that $\gcd(f_1, n+1)$ is $n+1$ when $n$ is even and $(n+1)/2$ when $n$ is odd. 
Recall $f_1 = \frac{n(n+1)}2$.
If $n$ is even, then $n+1$ divides $f_1$. 
Thus, $\gcd(f_1, n+1) = n+1$. 
If $n$ is odd, then 
\begin{equation*}
\gcd(f_1, n+1) = \gcd(\frac{n+1}2 \cdot n, n+1) = \gcd(\frac{n+1}2, n+1) = \frac{n+1}2.
\end{equation*} 
The second last equality follows from the fact that $\frac{n+1}2$ is an integer and for integers $a,b,c$, $\gcd(ab,c) = \gcd(a,c)$ if $b,c$ are relatively prime. 

Thus, $\mathbf{x} = \begin{bmatrix} f_1 & \cdots & f_n \end{bmatrix}^T$ is a solution having the asserted gcd. 
This completes the proof. 
\end{proof}

\section{On certain directed multipartite graphs} \label{section: conjecture}

In this section, we study Picard groups of a class of directed multipartite graphs. 
We call members of this class as single-flow directed multipartite graphs. 
The structure of the graphs that we investigate are designed to resemble artificial neural networks. 
Our results 
provides patterns in for \emph{Perceptron} style model with two layers (\Cref{bipartitePic}) and a \emph{Hidden Layer} model with three layers (\Cref{multipartite3Pic}).
Thus, \Cref{multipartiteQuestion} can be though of as an attempt to understand the ``deep'' neural networks.  

\begin{mydef}
By a \emph{single-flow directed multipartite graph} $(G,\mathcal{P})$, we mean a directed graph $G$ together with a partition $\mathcal{P}$ of the vertex set $V(G)=V_1(G) \sqcup \cdots \sqcup V_t(G)$ satisfying the following conditions.
\begin{enumerate}
\item There is no empty partition. That is, $|V_i(G)| > 0$ for all $i \in \{1, \dots, t \}$. 
\item 
For any $i \in \{1,\dots,t\}$ and $u,v \in V_i(G)$, there is no arrow between $u$ and $v$,
\item 
For any $i  \in \{1,\dots,t-1\}$, $u \in V_i(G)$, and $v \in V_{i+1}(G)$, there exists an arrow $e_{\overrightarrow{u v}}$.
\end{enumerate}	
For a single-flow directed multipartite graph $(G,\mathcal{P})$, we call $t$ the \emph{number of layers} of $(G,\mathcal{P})$. 
\end{mydef}

The Picard group of a undirected multipartite graph is determined in \cite{jacobson2003critical}. 

\begin{myeg}
When $a=2$ and $b=3$, we have the following:
\[
G=\left(\begin{tikzcd}[row sep=0.2cm, column sep=1cm]
& & 3 \\
1 \arrow[rru] \arrow[rr] \arrow[rrd]& &4 \\
2 \arrow[rru] \arrow[rr] \arrow[rruu]& &5
\end{tikzcd} \right).
\]
Then we have
\[
L_G = \left[ \begin{array}{cc|ccc}
3& 0& -1&-1 &-1 \\
0 & 3 & -1 & -1 &-1\\
\hline
0& 0& 0& 0 &0\\
0& 0& 0 &0 & 0\\
0& 0&0&0 & 0
\end{array}
\right] \implies \textrm{SNF}(L_G) = \begin{bmatrix}
1& 0& 0&0 &0\\
0 & 3 & 0 & 0&0\\
0& 0& 0& 0 &0\\
0& 0& 0 &0 & 0\\
0& 0&0&0 & 0
\end{bmatrix}
\]
Hence $\Pic(G) \cong \Z^3 \times \Z_3$.
\end{myeg}

\begin{pro}\label{bipartitePic}
Let $(G,\mathcal{P})$ be a single-flow directed multipartite graph with two layers. Let $|V_1(G)|=a$ and $|V_2(G)|=b$. Then we have
\[
\Pic(G) \cong \mathbb{Z}^b \times \mathbb{Z}_{b}^{a-1}.
\]
\end{pro}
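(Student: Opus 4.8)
The plan is to compute the cokernel of $L_G$ directly, which by \Cref{theorem: gcd theorem} has the same Smith normal form, and hence the same cokernel, as $L_G^T$. First I would order the vertices so that $V_1(G) = \{v_1, \dots, v_a\}$ precedes $V_2(G) = \{v_{a+1}, \dots, v_{a+b}\}$. Every vertex of $V_1$ has out-degree $b$, with exactly one arrow to each vertex of $V_2$, while every vertex of $V_2$ has out-degree $0$; thus the Laplacian takes the block form
\[
L_G = \left[\begin{array}{c|c} b\, I_a & -\mathbf{1}_{a\times b} \\ \hline \mathbf{0} & \mathbf{0}\end{array}\right],
\]
where $\mathbf{1}_{a\times b}$ is the $a\times b$ all-ones matrix and the bottom $b$ rows vanish.

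The key observation is that the $b$ columns coming from $V_2$ are all identical: each is the vector with $-1$ in the first $a$ coordinates and $0$ elsewhere. Consequently the image of $L_G$ in $\Z^{a+b}$ lies entirely in the first-$a$-coordinate block $\Z^a$, where it is the subgroup generated by $b e_1, \dots, b e_a$ together with the single vector $e_1 + \cdots + e_a$ (here $e_i$ is the standard basis vector of $\Z^a$). Since the last $b$ coordinates are never hit, the cokernel splits off a free part, giving
\[
\Pic(G) \cong \Z^b \oplus \bigl(\Z^a \big/ \langle b e_1, \dots, b e_a,\ e_1 + \cdots + e_a\rangle\bigr),
\]
so it remains only to identify the second summand.

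To evaluate that quotient I would first divide out the relations $b e_i = 0$ to obtain $(\Z/b)^a$, and then quotient further by the image of $e_1 + \cdots + e_a$, i.e. by the element $(1, \dots, 1) \in (\Z/b)^a$. The one genuine obstacle is to verify that this element generates a \emph{direct summand}, so that the quotient is exactly $(\Z/b)^{a-1}$ and not some group carrying different torsion. This holds because $(1, \dots, 1)$ extends to a $\Z$-basis of $\Z^a$: the integer matrix whose first column is $(1, \dots, 1)^T$ and whose remaining columns are $e_2, \dots, e_a$ is unimodular, so reducing it modulo $b$ exhibits $(1,\dots,1)$ as the first vector of a basis of $(\Z/b)^a$. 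Quotienting by the resulting cyclic summand leaves $(\Z/b)^{a-1}$, and therefore $\Pic(G) \cong \Z^b \times \Z_b^{a-1}$. Equivalently, one could run the same argument in the language of Smith normal forms: column-reduce $-\mathbf{1}_{a\times b}$ to a single column $-\mathbf{1}_a^T$ plus $b-1$ zero columns, and then show that $[\,b I_a \mid -\mathbf{1}_a^T\,]$ has invariant factors $1, b, \dots, b$ with $b$ appearing $a-1$ times, which yields the same conclusion and matches the $a=2,\,b=3$ example computed above.
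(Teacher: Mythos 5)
Your proof is correct, and it reaches the paper's conclusion by a route that differs in its final step. Both arguments begin identically: the same block form of $L_G$, the same appeal to \Cref{theorem: gcd theorem} to work with $L_G$ rather than $L_G^T$, the observation that the $b$ columns coming from $V_2(G)$ coincide, and the resulting splitting of a free factor $\Z^b$ off the cokernel. From there the paper stays entirely inside the matrix framework: it performs explicit column and row operations on $[\, bI_a \mid \mathbf{1}_a^T \,]$ until it reaches the diagonal matrix with entries $1, b, \dots, b$ (with $b$ repeated $a-1$ times), i.e.\ it computes the Smith normal form outright and reads off the cokernel. You instead argue at the level of the quotient group: you reduce to $\Z^a / \langle b e_1, \dots, b e_a,\ e_1+\cdots+e_a\rangle$, pass to $(\Z/b)^a$, and then must show that the class of $(1,\dots,1)$ generates a \emph{direct summand}; you do this by noting that $(1,\dots,1), e_2, \dots, e_a$ is a $\Z$-basis of $\Z^a$, so its reduction is a $\Z/b$-basis of $(\Z/b)^a$. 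That is the one genuinely subtle point -- quotienting $(\Z/b)^a$ by an arbitrary cyclic subgroup need not give $(\Z/b)^{a-1}$ -- and you identify and dispatch it correctly. The two proofs are really the same computation in different languages: the unimodular matrix you exhibit is precisely the change of coordinates that the paper's final row operation (subtracting the first row from all the others) implements. What your version buys is conceptual clarity, isolating exactly why the answer has $a-1$ copies of $\Z_b$; what the paper's version buys is the explicit Smith normal form and invariant factors, consistent with the computational style it uses for the other graph families. Your closing alternative -- column-reducing $-\mathbf{1}_{a\times b}$ to a single column and finding the invariant factors of $[\, bI_a \mid -\mathbf{1}_a^T \,]$ -- is exactly the paper's proof.
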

\begin{proof}
The Laplacian matrix of $G$ is the following form.	
\[
L_G=\left[ \begin{array}{c|c}
bI_{a} & - \mathbf{1}_{a\times b} \\
\hline 
\mathbf{0} & \mathbf{0}
\end{array} \right].
\] 
By subtracting the $(a+1)$th column from the columns right to it, 
$L_G$ is equivalent to the following matrix.
\[
\left[ \begin{array}{c|c|c}
bI_{a} & -\mathbf{1}_a^T & \mathbf{0} \\
\hline 
\mathbf{0} & \mathbf{0} & \mathbf{0}
\end{array} \right].
\]
Note that $I_k(L_G) = I_k(L_{sub})$, where $L_{sub} = \begin{bmatrix} b I_a & \mathbf{1}_a^T \end{bmatrix}$.
Thus, it suffices to study $L_{sub}$. Let $c_1, \dots, c_{a+1}$ denote the columns of $L_{sub}$. Replace $c_1$ with $-(c_2+\cdots+c_a)+bc_{a+1}$ and then switch the first and the last columns. One has the following equivalences of $L_{sub}$. 
\begin{align}\label{multipartiteEq}
\begin{bmatrix}
b & 0 & \cdots  &\cdots  &0 &1 \\
0 & b & 0 & \cdots & 0 & 1 \\
0 & 0 & \ddots & 0 & 0 & 1\\
\vdots &  \vdots & \vdots & \ddots & \vdots &\vdots \\
0 &  \cdots& 0 & 0 & b & 1  
\end{bmatrix} 
\sim 
\begin{bmatrix}
0 & 0 & \cdots  &\cdots  &0 &1 \\
0 & b & 0 & \cdots & 0 & 1 \\
0 & 0 & \ddots & 0 & 0 & 1\\
\vdots &  \vdots & \vdots & \ddots & \vdots &\vdots \\
0 &  \cdots& 0 & 0 & b & 1  
\end{bmatrix} 
&
\sim 
\begin{bmatrix}
1 & 0 & \cdots  &\cdots  &0 &0 \\
1 & b & 0 & \cdots & \cdots & 0 \\
1 & 0 & \ddots & 0 & \cdots & 0\\
\vdots &  \vdots & \vdots & \ddots & \vdots &\vdots \\
1 &  \cdots& 0 & 0 & b & 0  
\end{bmatrix} 
\end{align}
By subtracting the first row from all other rows, we have the following matrix.
\[
\left[ \begin{array}{c|c|c}
1 & \mathbf{0} & \mathbf{0}\\
\hline 
\mathbf{0} & bI_{a-1} &\mathbf{0}
\end{array} \right].	
\]
Now, the result follows from the Smith normal form of $L_G$.
\begin{equation*}
\snf(L_G)
= 
\left[ \begin{array}{c|c|c}
1 & \mathbf{0} & \mathbf{0} \\
\hline
\mathbf{0} & bI_{a-1} & \mathbf{0} \\
\hline 
\mathbf{0} & \mathbf{0} & \mathbf{0}
\end{array} \right]. \qedhere
\end{equation*}
\end{proof}

\begin{remark}
Let $(G,\mathcal{P})$ be a single-flow directed multipartite graph with $t$ layers and $|V_i(G)| = a_i$. Then the Laplacian matrix $L_G$ is of the following block matrix form
\begin{equation*}
\left[ \begin{array}{c|c|c|c|c|c}
a_2 I_{a_1} & -\mathbf{1}_{a_1 \times a_2} & \mathbf{0} & \mathbf{0} & \cdots  & \mathbf{0}\\
\hline 
\mathbf{0} & a_3 I_{a_2} & -\mathbf{1}_{a_2 \times a_3} & \cdots & \mathbf{0} & \mathbf{0}\\
\hline  
\vdots & \vdots & \ddots & \ddots & \vdots & \vdots  \\
\hline
\vdots & \vdots & \vdots & \ddots & \ddots & \vdots  \\
\hline
\mathbf{0} & \mathbf{0} & \cdots & \mathbf{0} & a_{t} I_{a_{t-1}} & -\mathbf{1}_{a_{t-1} \times a_t}  \\
\hline
\mathbf{0} & \mathbf{0} & \cdots & \mathbf{0} & \mathbf{0} & \mathbf{0}
\end{array} \right],
\end{equation*}
where the $(i,j)$th block is of size $a_i \times a_j$.  

From this one can deduce that the rank of the Picard group of $G$ is $a_t$.\footnote{One can also see in this case that each vertex in $V_t(G)$ is a terminal strong component. Hence, this directly follows from Theorem \ref{theorem: wagner}.}
This follows since the number of non-zero rows is $r := a_1 + \cdots + a_{t-1}$ and the determinant of the principal $r \times r$ minors is $a_2^{a_1} \cdots a_t^{a_{t-1}} \neq 0$. 
Here the principal minor of size $k$ of a matrix $M$ is the minor of the first $k$ rows and columns. 
We note that this minor is not equal to $|\Jac(G)|$ in general, see \Cref{sizeOfJac3}.
\end{remark}

\begin{lemma}\label{sizeOfJac3}
Let $(G,\mathcal{P})$ be a single-flow directed multipartite graph with three layers, and let $|V_1(G)|=a$, $|V_2(G)|=b$, and $|V_3(G)|=c$. 
Then $|\Jac(G)| = {b^a \cdot c^{b-1}}$. 
\end{lemma}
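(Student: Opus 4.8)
The plan is to identify $|\Jac(G)|$ with the gcd of the top-size minors of the Laplacian, then to pin that gcd down using a single block-triangular determinant together with the Cramer (syzygy) relation among the columns. Writing $r=a+b$, specializing the block form in the remark above to $t=3$ gives
\[
L_G = \begin{bmatrix} bI_a & -\mathbf{1}_{a\times b} & \mathbf{0} \\ \mathbf{0} & cI_b & -\mathbf{1}_{b\times c} \\ \mathbf{0} & \mathbf{0} & \mathbf{0} \end{bmatrix},
\]
whose last $c$ rows vanish. By \Cref{theorem: wagner} the free rank of $\Pic(G)=\operatorname{coker}(L_G^T)$ is $c$ (each vertex of the last layer is its own terminal strong component), so among the $a+b+c$ invariant factors of $L_G$ exactly $r=a+b$ are nonzero, say $d_1,\dots,d_r$, and $|\Jac(G)|=d_1\cdots d_r$. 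By \Cref{theorem: gcd theorem}(1) this product generates $I_r(L_G)$, so $|\Jac(G)|$ is the gcd of all $r\times r$ minors of $L_G$; since the bottom $c$ rows are zero, these are exactly the $r\times r$ minors of the submatrix $\tilde L$ formed by the first two block rows.

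Next I would shrink $\tilde L$. Its $c$ third-block columns are all equal to $(\mathbf 0_a,-\mathbf 1_b)^T$, so subtracting the first from the other $c-1$ (invertible column operations, harmless for $I_r$ by \Cref{remark: reduction}) zeroes them out; discarding the zero columns leaves the $r\times(r+1)$ matrix
\[
N = \begin{bmatrix} bI_a & -\mathbf{1}_{a\times b} & \mathbf{0} \\ \mathbf{0} & cI_b & -\mathbf{1}_b^{T} \end{bmatrix},
\qquad I_r(N)=I_r(\tilde L).
\]
Deleting the last column of $N$ yields the block-triangular determinant $b^a c^b\neq 0$, so $N$ has full row rank $r$ and its $r+1$ columns span a one-dimensional rational kernel.

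This one-dimensionality is the crux. Label the columns of $N$ by $\ell=1,\dots,r+1$, and let $\Delta_\ell$ be the minor obtained by deleting column $\ell$. The standard cofactor identity says the vector with $\ell$-th entry $(-1)^\ell\Delta_\ell$ lies in $\ker N$. On the other hand, a direct check exhibits an explicit kernel vector $\lambda=(1,\dots,1,c)$ (entry $1$ on each of the first $a+b$ columns and $c$ on the last): summing the first-block columns gives $(b\mathbf 1_a,\mathbf 0)^T$, summing the second-block columns gives $(-b\mathbf 1_a,c\mathbf 1_b)^T$, and $c$ times the last column is $(\mathbf 0,-c\mathbf 1_b)^T$, and these three add to $\mathbf 0$. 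Because $\lambda$ has entries equal to $1$ it is primitive, so the integer vector $((-1)^\ell\Delta_\ell)_\ell$, being rationally proportional to $\lambda$, is an \emph{integer} multiple: $(-1)^\ell\Delta_\ell=t\,\lambda_\ell$ for a single $t\in\Z$. Evaluating at $\ell=r+1$, where $\lambda_{r+1}=c$ and $\Delta_{r+1}=b^a c^b$, forces $t=\pm b^a c^{b-1}$. Hence $\Delta_\ell=\pm b^a c^{b-1}$ for every $\ell\le a+b$ and $\Delta_{r+1}=b^a c^b$, so the gcd of all maximal minors is $\gcd(b^a c^{b-1},\,b^a c^b)=b^a c^{b-1}=|\Jac(G)|$.

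The genuinely delicate points are organizational rather than computational: verifying that the $r$ nonzero invariant factors account for the full torsion order (so that $I_r$, and not a smaller $I_k$, is the relevant ideal) and invoking the Cramer identity for the signed maximal minors of an $r\times(r+1)$ matrix. Once these are in place, the primitivity of $\lambda$ and the single block-triangular evaluation $\Delta_{r+1}=b^a c^b$ carry all the weight; in particular they let me avoid computing the awkward minors in which a first-block column is deleted, which is where a naive term-by-term expansion stalls. As a sanity check, the same argument with two layers reproduces \Cref{bipartitePic}, and the $c=1$ specialization is the single-global-sink case.
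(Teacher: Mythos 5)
Your proof is correct, but it takes a genuinely different route from the paper's at the decisive step. Both arguments begin identically: reduce $|\Jac(G)|$ to the generator of $I_{a+b}(L_G)$ (you get the rank $a+b$ from \Cref{theorem: wagner}, the paper observes it directly from the matrix), then collapse the $c$ identical third-block columns to a single column, leaving an $(a+b)\times(a+b+1)$ matrix. From there the paper continues with explicit integer row/column operations: it reuses the reduction \eqref{multipartiteEq} from the bipartite case on the second block row, observes that the resulting last column is the sum of the first $a$ columns, deletes it, and reads off the determinant $b^a c^{b-1}$ of the resulting block-triangular square matrix. You instead invoke the cofactor syzygy for an $r\times(r+1)$ matrix: the vector of signed maximal minors spans the rational kernel, and your explicit primitive kernel vector $(1,\dots,1,c)$ forces every minor $\Delta_\ell$ with $\ell\le a+b$ to equal $\pm t$ and $\Delta_{r+1}=tc$ for a single integer $t$, pinned down by the one easy evaluation $\Delta_{r+1}=b^ac^b$. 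Your route is self-contained and yields strictly more information (the exact value of every maximal minor, not just their gcd), and it is amusingly closer in spirit to the paper's own Cramer's-rule argument for wheel graphs (\Cref{lemma: M_nCremer}) than to the paper's proof of this lemma; the paper's route, by contrast, recycles the bipartite computation and produces the reduced matrix form that it continues to exploit after the lemma when extracting the full Smith normal form and \Cref{multipartite3Pic}, which your minor-based argument does not supply. One small citation quibble: the fact that invertible column operations and deletion of zero columns preserve $I_r$ is not really \Cref{remark: reduction} (which concerns bordered matrices with a $\pm1$ entry); it follows from \Cref{theorem: gcd theorem}, since equivalent matrices have the same Smith normal form and hence the same determinantal ideals.
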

\begin{proof}
We have the following Laplacian matrix of $G$. 
\begin{equation*}
L_G = \left[ \begin{array}{c|c|c}
b I_{a} & -\mathbf{1}_{a \times b} & \mathbf{0} \\
\hline 
\mathbf{0} & c I_{b} & -\mathbf{1}_{b \times c} \\
\hline
\mathbf{0} & \mathbf{0} & \mathbf{0}
\end{array} \right].
\end{equation*}
Since the rank of $L_G$ is $a+b$, 
it suffices to show that $I_{a+b}(L_G) = \langle b^a \cdot c^{b-1} \rangle$.
First observe that 
\begin{equation*}
I_{a+b}(L_G) = I_{a+b}\left( 
\left[ \begin{array}{c|c|c}
b I_{a} & -\mathbf{1}_{a \times b} & \mathbf{0} \\
\hline  \\[-0.4cm]
\mathbf{0} & c I_{b} & \mathbf{1}_{b}^T 
\end{array} \right]	
\right).
\end{equation*}
We apply the modifications in \cref{multipartiteEq} to the second row block of $L_G$. 
Note that in this case we also need to keep track of the $(1,2)$ and $(1,3)$ sub-blocks. 
\begin{align*}
\left[ \begin{array}{c|c|c}
b I_{a} & -\mathbf{1}_{a \times b} & \mathbf{0} \\
\hline  \\[-0.4cm]
\mathbf{0} & c I_{b} & \mathbf{1}_{b}^T 
\end{array} \right]	
\sim
\left[ \begin{array}{c|c|c}
b I_{a} & N & b\cdot\mathbf{1}_a^T \\
\hline  
\mathbf{0} & N' & \mathbf{0}
\end{array} \right],
\end{align*}
where $N$ and $N'$ are the following matrices.
\begin{equation*}
N = \left[ \begin{array}{c|c}
\mathbf{0}_a^T & -\mathbf{1}_{a \times (b-1)}
\end{array} \right]	
~\text{and}~
N' =  \left[ \begin{array}{c|c}
1 & \mathbf{0}  \\
\hline
\mathbf{0} & cI_{b-1} 
\end{array} \right]	
\end{equation*}
Note that the last column is the sum of the first $a$ columns. 
Therefore, $I_{a+b}(L) = \langle \det(bI_a) \cdot \det(N') \rangle = \langle b^a \cdot c^{b-1} \rangle$ and $|\Jac(G)| = {b^a \cdot c^{b-1}}$.
\end{proof}

Indeed, one can further reduce to a simpler case by reducing the block matrix
\begin{equation*}
L=\left[ \begin{array}{c|c}
b I_{a} & N  \\
\hline 
\mathbf{0} & N' 
\end{array} \right],
\end{equation*}
and it will provide a way to determine the Smith normal form of $L_G$. 

Since the $(1,1)$-entry of $N'$ is $1$, 
$I_{k+1}(L) = I_{k}(L')$, where 
$L'$ is the matrix obtained by deleting the row and column of containing the $(1,1)$ entry of $N'$.
That is,  
\begin{equation*}
L'=\left[ \begin{array}{c|c}
b I_{a} & -\mathbf{1}_{a \times (b-1)}  \\
\hline 
\mathbf{0} & cI_{b-1} 
\end{array} \right].
\end{equation*}
If $b = 1$, then by \Cref{sizeOfJac3}, $|\Jac(G)| = 0$. 
This implies that the non-zero diagonal entries of $\snf(L_G)$ consists of $1$'s. 
In the rest, we assume that $b > 1$, so the expression $I_{b-2}$ is valid.
Let $r_1, \dots, r_{a+b-1}$ and $c_1, \dots, c_{a+b-1}$ denote the rows and columns of $L'$. 
By subtracting $r_a$ from each of $r_1, \dots, r_{a-1}$ and by subtracting $c_{a+1}$ from each of 
$c_{a+2}, \dots, c_{a+b-1}$, 
we have
\begin{equation*}
L'\sim \left[ \begin{array}{ccccc|ccccc}
b & 0 & \cdots & 0 & -b &0&\cdots&\cdots&\cdots&0  \\ 
0 & b & \cdots & 0 & -b &\vdots&\cdots&\cdots&\cdots&\vdots  \\ 
\vdots & \vdots & \ddots & \vdots & \vdots &
\vdots & \vdots & \ddots & \vdots & \vdots\\ 
0 & \cdots & 0 & b & -b &0&0&\cdots&\cdots&0  \\ 
0 & \cdots & \cdots & 0 & b &-1& 0 &\cdots&\cdots&0  \\ 
\hline
0&\cdots&\cdots&\cdots&0 & c & -c & -c & \cdots & -c \\ 
\vdots&\cdots&\cdots&\cdots&\vdots & 0 & c & \cdots & 0 & 0 \\ 
\vdots & \vdots & \ddots & \vdots & \vdots & \vdots & \vdots & \ddots & \vdots & \vdots \\ 
\vdots&\cdots&\cdots&\cdots&\vdots & 0 & \cdots & 0 & c & 0 \\ 
0&\cdots&\cdots&\cdots&0 & 0 & \cdots & \cdots & 0 & c \\ 
\end{array} \right].
\end{equation*}	
Further column and row operations will reduce to the case of having $bI_a$ for the $(1,1)$-block and $cI_{b-1}$ for the $(2,2)$-block, respectively.
Last reduction is done on the sub-block 
$\begin{bmatrix} b & -1 \\ 0 & c \end{bmatrix}$ 
by replacing it with its Smith normal form 
$\begin{bmatrix} 1 & 0 \\ 0 & bc \end{bmatrix}$.
Therefore, $L'$ is equivalent to the following matrix
\begin{equation*}
L'' = \left[ \begin{array}{c|c|c|c}
1 & 0 & \mathbf{0} & \mathbf{0}\\
\hline
0 & bc & \mathbf{0} & \mathbf{0}\\
\hline
\mathbf{0} & \mathbf{0} & bI_{a-1} & \mathbf{0} \\
\hline
\mathbf{0} & \mathbf{0} & \mathbf{0} & cI_{b-2}
\end{array} \right].
\end{equation*}
Hence $L_G$ is equivalent to the following matrix
\begin{equation*}
L_G \sim L_G' = \left[ \begin{array}{c|c|c|c|c}
I_2 & \mathbf{0} & \mathbf{0} & \mathbf{0}& \mathbf{0}\\
\hline
0 & bc & \mathbf{0} & \mathbf{0} & \mathbf{0}\\
\hline
\mathbf{0} & \mathbf{0} & bI_{a-1} & \mathbf{0} & \mathbf{0}\\
\hline
\mathbf{0} & \mathbf{0} & \mathbf{0} & cI_{b-2} & \mathbf{0} \\
\hline
\mathbf{0} & \mathbf{0} & \mathbf{0} & \mathbf{0} & \mathbf{0}
\end{array} \right].
\end{equation*}	
Though $L_G'$ is a diagonal matrix, it is not in its Smith normal form in general. 
However, since $L_G'$ is a diagonal matrix, we can determine the cokernel of $L_G$ from $L_G'$. 
That is, 
\begin{equation}\label{eleDivMul3}
\operatorname{coker} L_G \cong \Pic (G) \cong \Z_{bc} \times \Z_b^{a-1} \times \Z_c^{b-2} \times \mathbb{Z}^c.
\end{equation} 
This determines the Picard group of $G$. 
We note that by the well-known correspondence between the elementary divisors and invariant factors of a finitely generated abelian group, 
one can deduce the Smith normal form of $L_G$ from \cref{eleDivMul3}. 
Another method is direct though it is essentially the same as the correspondence.
Note that the Smith normal form of the matrix $\begin{bmatrix}
b & 0 \\ 0 & c
\end{bmatrix}
$
is $\begin{bmatrix}
\gcd(b,c) & 0 \\ 0 & \operatorname{lcm}(b,c)
\end{bmatrix}$,
where $\gcd$ and $\operatorname{lcm}$ denotes the greatest common divisor and the least common multiple. 
Therefore, the non-zero diagonal entries of the Smith normal form of $L_G$ consists of $1, \gcd(b,c), \allowbreak \operatorname{lcm}(b,c), b, c$, and $bc$.
It is clear that $1 \mid \gcd(b,c) \mid b,c \mid \operatorname{lcm}(b,c) \mid bc$. 
Recall if $b = 1$, then the non-zero diagonal entries of $\snf(L_G)$ consists of $1$'s. 
In \cref{minorLMul3}, we write $g = \gcd(b,c)$ and $l = \operatorname{lcm}(b,c)$, and let the exponential notation $a^{[b]}$ denote $\underbrace{a,\dots, a}_{b-\text{times}}$. 
With this notation, the non-zero diagonal entries of the Smith normal form of $L_G$ follow the following pattern. 
\begin{equation}\label{minorLMul3}
\begin{array}{ll}
1^{[a+b]} & \text{if}~b = 1, \\
1^{[2]},g^{[(b-2)]},b^{[a-b+1]},l^{[(b-2)]},bc & \text{if}~a-1 \ge b-2 \ge 0, ~\text{and}\\
1^{[2]},g^{[(a-1)]},c^{[b-a-1]},l^{[(a-1)]},bc & \text{if}~a-1 \le b-2.
\end{array}
\end{equation}
Thus, we have proved the following theorem.

\begin{theorem}\label{multipartite3Pic}
Let $(G,\mathcal{P})$ be a single-flow directed multipartite graph with three layers, and let $|V_1(G)|=a$, $|V_2(G)|=b$, and $|V_3(G)|=c$. 
Then the Picard group of $G$ is isomorphic to 
$$\Z^c \times \Jac(G),$$ where 
\begin{equation*}
\Jac(G) \cong
\begin{cases}
0 & \text{if}~b = 1, \\
\Z_g^{b-2}\times \Z_b^{a-b+1}\times \Z_l^{b-2}\times \Z_{bc} & \text{if}~a \ge b-1 ~\text{and}~b\ge 2, \\
\Z_g^{a-1}\times \Z_c^{b-a-1}\times \Z_l^{a-1} \times \Z_{bc} & \text{if}~a \le b-1.	
\end{cases}
\end{equation*}
Here, $g$ and $l$ denote the greatest common divisor and the least common multiple of $b$ and $c$, respectively. 
\end{theorem}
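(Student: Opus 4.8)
The plan is to determine $\Pic(G) = \operatorname{coker}(L_G^T)$ by reducing the block Laplacian $L_G$ to a diagonal matrix over $\Z$ through integer row and column operations, and then reading off its cokernel; by item (2) of \Cref{theorem: gcd theorem} it is equivalent to work with $L_G$ rather than $L_G^T$. First I would record that the free rank is $c$: every vertex of $V_3(G)$ is a sink and hence a terminal strong component on its own, so \Cref{theorem: wagner} gives $\rank \Pic(G) = c$, accounting for the $\Z^c$ summand. It then remains to compute the torsion subgroup $\Jac(G)$.

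For the torsion I would collapse the last block column exactly as in the two-layer argument: subtracting one $V_3$-column from the remaining $V_3$-columns turns the block $-\mathbf{1}_{b \times c}$ into a single column $\mathbf{1}_b^T$ (the manipulation recorded in \eqref{multipartiteEq}), reducing $L_G$ to the $(a+b)\times(a+b)$ matrix already met in \Cref{sizeOfJac3}. Applying the bipartite reduction of \Cref{bipartitePic} to the middle block, together with \Cref{remark: reduction} to strip off the resulting unit pivots, brings $L_G$ to a diagonal matrix $L_G'$. The one genuinely new ingredient is the $2\times 2$ block $\left[\begin{smallmatrix} b & -1 \\ 0 & c \end{smallmatrix}\right]$ produced in the corner, whose Smith normal form is $\left[\begin{smallmatrix} 1 & 0 \\ 0 & bc \end{smallmatrix}\right]$. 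After this step the nonzero diagonal entries of $L_G'$ are two units, one copy of $bc$, together with $a-1$ copies of $b$ and $b-2$ copies of $c$, so that $\operatorname{coker}(L_G) \cong \Z_{bc}\times \Z_b^{a-1}\times \Z_c^{b-2}\times \Z^c$, which is \eqref{eleDivMul3}.

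Finally I would repackage these cyclic factors using the correspondence $\Z_b \times \Z_c \cong \Z_g \times \Z_l$ with $g = \gcd(b,c)$ and $l = \operatorname{lcm}(b,c)$. Pairing copies of $\Z_b$ with copies of $\Z_c$ greedily produces $\min(a-1, b-2)$ copies each of $\Z_g$ and $\Z_l$ and leaves the surplus copies of whichever of $b$ or $c$ is more abundant; this split is exactly the source of the two cases $a \ge b-1$ (surplus of $b$, giving $\Z_b^{a-b+1}$) and $a \le b-1$ (surplus of $c$, giving $\Z_c^{b-a-1}$), which agree along the boundary $a = b-1$.

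The main obstacle I anticipate is not conceptual but combinatorial bookkeeping: correctly tracking the multiplicities through the gcd/lcm pairing and treating the degenerate values $b = 1$ (where \Cref{sizeOfJac3} forces $|\Jac(G)| = 1$, so $\Jac(G) = 0$) and $b = 2$ (where no $\Z_g, \Z_l$ terms appear), while verifying that the divisibility chain $1 \mid g \mid b,c \mid l \mid bc$ is respected so that the resulting decomposition is legitimate. Everything else is routine integer elimination already set up by the preceding lemmas.
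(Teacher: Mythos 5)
Your proposal is correct and follows essentially the same route as the paper: collapse the $V_3$-block as in \Cref{sizeOfJac3}, apply the bipartite reduction and \Cref{remark: reduction} to reach a diagonal matrix whose only nontrivial new piece is the $2\times 2$ block with Smith normal form $\operatorname{diag}(1,bc)$, yielding \cref{eleDivMul3}, and then regroup $\Z_b^{a-1}\times\Z_c^{b-2}$ via $\Z_b\times\Z_c\cong\Z_g\times\Z_l$ into the two stated cases. The only cosmetic difference is that you invoke Wagner's theorem (\Cref{theorem: wagner}) for the free rank $c$, whereas the paper reads it off from the $c$ zero rows of the final diagonal form; both are fine.
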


The structure of the graphs that we investigate are designed to resemble artificial neural networks. 

\begin{question}\label{multipartiteQuestion}
Let $(G,\mathcal{P})$ be a single-flow directed multipartite graph with $t$ layers.
What is the Smith normal form of the Laplacian matrix $L_G$ of $G$? 
That is, what is their Picard groups? 
\end{question}

\bigskip
\bibliographystyle{plain} 
\bibliography{Jacobian}
\end{document}